\newcommand{\A}{\mathbb{A}}
\newcommand{\B}{\mathbb{B}}
\newcommand{\C}{\mathbb{C}}
\newcommand{\G}{\mathbb{G}}
\newcommand{\N}{\mathbb{N}}
\renewcommand{\P}{\mathbb{P}}
\newcommand{\Q}{\mathbb{Q}}
\newcommand{\R}{\mathbb{R}}
\renewcommand{\S}{\mathbb{S}}
\newcommand{\cB}{\mathcal{B}}
\newcommand{\cE}{\mathcal{E}}
\newcommand{\cH}{\mathcal{H}}
\newcommand{\cJ}{\mathcal{J}}
\newcommand{\cL}{\mathcal{L}}
\newcommand{\cO}{\mathcal{O}}
\newcommand{\cX}{\mathcal{X}}
\renewcommand{\d}{\delta}
\newcommand{\e}{\varepsilon}
\renewcommand{\phi}{\varphi}
\newcommand{\eg}{{\rm e.g.\ }}
\renewcommand{\leq}{\leqslant}
\renewcommand{\geq}{\geqslant}
\newcommand{\abs}[1]{\left\lvert#1\right\rvert}
\newcommand{\norm}[1]{\left\|#1\right\|}
\renewcommand{\DH}{\mathrm{DH}}
\newcommand{\NA}{\mathrm{NA}}
\DeclareMathOperator{\CAT}{CAT}
\DeclareMathOperator{\lct}{lct}
\DeclareMathOperator{\PSH}{PSH}
\DeclareMathOperator{\Ric}{Ric}
\DeclareMathOperator{\supp}{supp}
\numberwithin{equation}{section}       
\newtheorem{prop} {Proposition} [section]
\newtheorem{thm}[prop] {Theorem} 
\newtheorem{dfn}[prop] {Definition}
\newtheorem{lem}[prop] {Lemma}
\newtheorem{cor}[prop]{Corollary}
\newtheorem{rem}[prop]{Remark}
\theoremstyle{remark}
\newtheorem*{ackn}{\bf{Acknowledgment}}
\newtheorem*{thmA}{\bf{Theorem A}} 
\newtheorem*{thmB}{\bf{Theorem B}}
\newtheorem*{dfn*}{\bf{Definition}}
\title[]{Geometric flow, multiplier ideal sheaves and optimal destabilizer for a Fano manifold} 
\date{\today} 
\author{Tomoyuki Hisamoto}
\address{Graduate School of Mathematics\\
  Nagoya University\\
  Furocho\\
  Chikusa\\
  Nagoya\\ 
  Japan}
\email{hisamoto@math.nagoya-u.ac.jp}
\begin{document}

\maketitle

\setcounter{tocdepth}{1}

\begin{abstract}
In \cite{Don05}, it was asked whether the lower bound of the Calabi functional is achieved by a sequence the normalized Donaldson-Futaki invariants. We answer to the question for the Ricci curvature formalism, in place of the scalar curvature. 
The principle is that the stability indicator is optimized by the multiplier ideal sheaves of certain weak geodesic ray asymptotic to the geometric flow. 
We actually prove it in the two cases: the inverse Monge-Amp\`ere flow and the K\"ahler-Ricci flow. 
\end{abstract}

\tableofcontents 

\section{Introduction}

Let $X$ be a Fano manifold. 
We are motivated to study how $X$ is far from K\"ahler-Einstein.  
To examine the curvature of each K\"ahler metric $\omega$ in the first Chern class $c_1(X)$ 
we make use of the normalized Ricci potential function $\rho$ which is characterized by 
\begin{equation}\label{Ricci potential}
\Ric \omega -\omega = dd^c\rho, ~~~
\int_X (e^\rho -1) \omega^n=0. 
\end{equation}
The volume $V= \int_X \omega^n$ is independent of $\omega$.   
The metric is K\"ahler-Einstein iff $\rho=0$ and it is equivalent to say that the scalar curvature is constant. 
For a general polarized manifold $(X, L)$, the famous Calabi functional measures how $\omega$ is far from constant scalar curvature and \cite{Don05} gives the lower bound in terms of his generalization of the Futaki invariant. 
For a Fano polarization $(X, -K_X)$ Ricci potential may work in place of the scalar curvature. 
In fact in analogy with Donaldson's lower bound, we have the inequality 

\begin{equation}\label{lower bound of the Ricci-Calabi functional}
\inf_{\omega} \bigg[ \frac{1}{V} \int_X (e^\rho -1)^2 \omega^n \bigg]^{\frac{1}{2}} 
\geq  \sup_{(\cX, \cL)} \frac{-D^\NA(\cX, \cL)}{\norm{(\cX, \cL)}_2}.  
\end{equation} 
Here $(\cX, \cL)$ runs through arbitrary test configurations of $(X, -K_X)$, $\norm{(\cX, \cL)}_2$ is the $L^2$-norm, and $D^\NA(\cX, \cL)$ is the non-Archimedean D-energy introduced in \cite{Berm16}, \cite{BHJ15}. 
We review the terminologies and a proof of (\ref{lower bound of the Ricci-Calabi functional}), in the next section. 
From the result of \cite{LX14}, positivity of $D^\NA(\cX, \cL)$ for every non-trivial test configuration is equivalent to the K-stability condition introduced by \cite{Don02}. 
The prototype of such inequalities already appears in geometric invariant theory (GIT for short), where it confronts square of the moment map with Hilbert-Mumford weights. 
For this reason we may call (\ref{lower bound of the Ricci-Calabi functional}) {\em moment-weight inequality}. 
The precise moment map picture was explained in \cite{Don15}, where the Kemp-Ness functional in GIT is translated into the (Archimedean) D-energy (\ref{D-energy}). 

In the scalar curvature setting 
Donaldson asked whether the equality holds in the above. 
In our setting, \cite{Yao17} recently proved that the equality in (\ref{lower bound of the Ricci-Calabi functional}) actually holds for toric Fano manifolds. 
If there exists a test configuration accomplishes the identity, it should be the optimal destabilizer in analogy with the Harder-Narasimhan and the Jordan-H\"older filtration for the vector bundles. 
The pioneering work \cite{N90} of Nadel already predicted that the certain multiplier ideal sheaf should serve as the destabilizing subsheaf of the vector bundle. See also \cite{PSS06}, \cite{Rub09}. 

In this paper we show that the equality holds in (\ref{lower bound of the Ricci-Calabi functional}) for general Fano manifolds, in virtue of adopting the Ricci potential formulation. 
The proof also gives a suggestion to the construction of the optimal destabilizer. 
Our new ingredient is the gradient flow of the D-energy. 
Using the $dd^c$-lemma we fix the reference metric $\omega_0$ and represent any other metric by a function $\phi$ so that $\omega= \omega_0 +dd^c\phi$ holds. 
The function $\phi$ is determined up to a constant and we consider $\rho=\rho_\phi$ or other quantities as functions in $\phi$. 
In terms of $\phi$ we introduce the {\em inverse Monge-Amp\`ere flow} 
\begin{equation}\label{inverse Monge-Ampere flow}
\frac{\partial}{\partial t} \phi = 1-e^\rho, 
\end{equation}
which imitates the Calabi flow in the scalar curvature setting. 
Although the long-time existence of the Calabi flow is still open question, we have the solution for (\ref{inverse Monge-Ampere flow}). 
This is one of the main results in our previous work \cite{CHT17}. 
Building on the Mabuchi geometry of space of K\"ahler metrics, especially on the technique exploited by \cite{DH17}, one can construct a weak geodesic ray $\Phi$ asymptotic to the flow. 
Blowing up the multiplier ideal sheaves $\cJ(m\Phi)$ for each $m \in \N$, we obtain a sequence of test configurations, which canonically approximates the geodesic ray. 
The technology here was paved by \cite{BBJ15} where they gave a variational approach to the celebrated result \cite{CDS15}. 
We will show that the equality of (\ref{lower bound of the Ricci-Calabi functional}) is then naturally achieved by the flow and these test configurations. 

\begin{thmA}[moment-weight equality]
Greatest lower bound of the Ricci-Calabi functional is given by a sequence of $L^2$-normalized non-Archimedean Ding energies, that is,  
\begin{equation*}
\inf_{\omega} \bigg[ \frac{1}{V} \int_X (e^\rho -1)^2 \omega^n \bigg]^{\frac{1}{2}} 
= \sup_{(\cX, \cL)} \frac{-D^\NA(\cX, \cL)}{\norm{(\cX, \cL)}_2}. 
\end{equation*} 
In fact the infimum is achieved by the inverse Monge-Amp\`ere flow (\ref{inverse Monge-Ampere flow}). 
The supremum is achieved by the test configurations which are defined as the blow-up of the associated multiplier ideal sheaves $\cJ(m\Phi)$.  
\end{thmA}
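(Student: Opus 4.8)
The plan is to prove the two directions of the moment-weight equality separately, with the inequality $\geq$ already granted by (\ref{lower bound of the Ricci-Calabi functional}) from the previous section. The heart of the argument is the reverse inequality $\leq$, which I would establish by producing, along the inverse Monge-Amp\`ere flow (\ref{inverse Monge-Ampere flow}), a matching sequence of test configurations whose normalized non-Archimedean Ding energies converge to the Ricci-Calabi functional evaluated along the flow. Concretely, I would first show that the flow is the gradient flow of the D-energy and compute its time-derivative: differentiating $D$ along (\ref{inverse Monge-Ampere flow}) should yield $\frac{d}{dt} D(\phi_t) = -\frac{1}{V}\int_X (e^\rho-1)^2 \omega^n$, so that the Ricci-Calabi functional is exactly the $L^2$-norm of the velocity $\dot\phi_t = 1-e^\rho$. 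This identifies the quantity on the left of Theorem A with the asymptotic slope of the D-energy along the flow.

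Second, I would pass from the flow $\phi_t$ to a weak geodesic ray $\Phi = (\Phi_s)$ asymptotic to it, invoking the construction of \cite{CHT17} and \cite{DH17}: one takes geodesic segments emanating from $\omega_0$ through $\phi_t$ and lets $t\to\infty$, using the Mabuchi geometry to control the limit. The key feature to extract is that the non-Archimedean D-energy of the geodesic ray, i.e.\ the $\sqrt{-1}\partial\bar\partial$-asymptotic slope $D^\NA(\Phi)$, equals the Archimedean asymptotic slope of $D$ along the flow; this is where convexity of $D$ along weak geodesics and the asymptotic comparison between the flow and the ray enter. Simultaneously, the $L^2$-norm of the ray must be shown to match the $L^2$-norm of the velocity, so that the ratio $-D^\NA(\Phi)/\norm{\Phi}_2$ recovers precisely the left-hand side of Theorem A.

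Third, I would algebraize the geodesic ray into genuine test configurations. Following the technology of \cite{BBJ15}, for each $m\in\N$ I blow up the multiplier ideal sheaf $\cJ(m\Phi)$ to obtain a test configuration $(\cX_m, \cL_m)$, and I would prove the two convergences $D^\NA(\cX_m, \cL_m) \to D^\NA(\Phi)$ and $\norm{(\cX_m, \cL_m)}_2 \to \norm{\Phi}_2$ as $m\to\infty$. The first convergence rests on the fact that $\cJ(m\Phi)$ captures the singularities of the ray sharply enough that the non-Archimedean Ding functional of the blow-up approximates that of the ray, exploiting the homogeneity and the behavior of log canonical thresholds under scaling by $m$; the second is a parallel statement at the level of the $L^2$-norm, controlled by the Duistermaat-Heckman measure of the filtration induced by $\Phi$. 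Chaining these convergences gives that the supremum on the right is attained in the limit by the blow-up test configurations, completing the direction $\leq$ and hence the equality.

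The main obstacle, I expect, is the coincidence $D^\NA(\Phi) = \lim_m D^\NA(\cX_m, \cL_m)$, since the multiplier ideal sheaves $\cJ(m\Phi)$ only approximate the singularity type of $\Phi$ and one must control the error as $m\to\infty$ uniformly against the $L^2$-normalization. This requires a sharp estimate relating the non-Archimedean energy of the ray to that of its multiplier-ideal blow-ups, of the kind established in \cite{BBJ15}, adapted to the Ding/D-energy rather than the Mabuchi energy; the asymptotic regularity of the geodesic ray $\Phi$ inherited from the flow (as opposed to an arbitrary ray) is what I anticipate makes this estimate effective, and verifying that inherited regularity is the delicate technical step.
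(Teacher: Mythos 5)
Your skeleton — gradient-flow identity for $D$, passage to a weak geodesic ray, multiplier-ideal test configurations $(\cX_m,\cL_m)$ — is indeed the paper's architecture, but the proof you outline hinges on two-sided limit identities that are genuinely unavailable, and you have located the difficulty in the wrong place. Concretely, you assert (a) that the radial slope of $D$ along the ray \emph{equals} the slope along the flow and that $\norm{\dot{\phi}^t}_2$ \emph{matches} $\lim_t\norm{\dot{\phi}_t}_2$, and (b) the full convergences $D^\NA(\cX_m,\cL_m)\to D^\NA(\Phi)$ and $\norm{(\cX_m,\cL_m)}_2\to\norm{\Phi}_2$. For (a), asymptoticity of the ray to the flow in $d_p$ would require a Harnack-type estimate that is \emph{not} established for the inverse Monge--Amp\`ere flow (unlike the K\"ahler--Ricci flow case of \cite{DH17}); all that is available is the one-sided lower semicontinuity $d_2(\phi^0,\phi^t)\le\liminf_j d_2(\phi^0,\phi_j^t)$ of Proposition \ref{Xia} (\cite{Xia19}, Lemma 5.1), proved via weak convergence in the $\CAT(0)$ space $\cE^2$, which together with uniqueness of $L^2$-geodesics (\cite{Dar17b}, Lemma 6.12) yields only $\norm{\dot{\phi}^t}_2\le\norm{\dot{\phi}_t}_2$. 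For (b), exact continuity of $D^\NA$ along the $\cJ(m\hat{\Phi})$-approximation (second half of Theorem \ref{continuity of D^NA}) requires the ray to be \emph{maximal} in the sense of \cite{BBJ15}, Definition 6.5, and maximality of the flow ray is precisely what is not known; without it one has only the upper semicontinuity $\limsup_m D^\NA(\cX_m,\cL_m)\le\lim_t D(\phi^t)/t$, and no norm convergence is proved at all. So the step you flag as "delicate" is in fact unproven, and your route stalls there.

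The missing idea is that Theorem A needs none of these exact limits, because in the ratio $-D^\NA/\norm{\cdot}_2$ a larger destabilization and a smaller norm both push in the favorable direction, and every available one-sided estimate goes the right way. Convexity of $D$ along the segments $\phi_j^t$ plus $d_1$-convergence give $\lim_t D(\phi_t)/t\ge\lim_T D(\phi^T)/T$, whence with the upper semicontinuity above $\liminf_m\big(-D^\NA(\cX_m,\cL_m)\big)\ge\lim_t\big(-D(\phi_t)/t\big)=\lim_t\norm{\dot{\phi}_t}_2\,R(\phi_t)^{1/2}$ by the Cauchy--Schwarz \emph{equality} of Lemma \ref{Cauchy-Schwartz} (Proposition \ref{D^NA is negative}). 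On the norm side one chains
\begin{equation*}
\norm{(\cX_m,\cL_m)}_2=\norm{\dot{\phi}_m^t-\e_m}_2\le\norm{\dot{\phi}_m^t}_2\le\norm{\dot{\phi}^t}_2\le\norm{\dot{\phi}_t}_2=R(\phi_t)^{1/2},
\end{equation*}
where the middle inequality uses the Demailly bound $\Phi_m\ge\Phi-C_{m,r}$ together with the Lidskii-type inequality (Theorem \ref{Lidskii type inequality}, \cite{DLR18}), the right-hand one is Xia's lemma as above, and the left-hand one is $\norm{\dot{\phi}_m^t-\e_m}_2^2=\norm{\dot{\phi}_m^t}_2^2-\e_m^2$ with $\e_m=E^\NA(\cX_m,\cL_m)\ge0$ — a normalization step your proposal does not address, since $\norm{(\cX_m,\cL_m)}_2$ subtracts the barycenter of the weights. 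Dividing gives $\liminf_m\,(-D^\NA(\cX_m,\cL_m))/\norm{(\cX_m,\cL_m)}_2\ge\lim_t R(\phi_t)^{1/2}\ge\inf_\omega R^{1/2}$, which combined with (\ref{lower bound of the Ricci-Calabi functional}) closes the loop without ever proving your exact limit identities.
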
   

Conjecturally the right-hand side would be the {\em maximum} attained by the optimal $(\cX, \cL)$, provided we slightly stretches the meaning of test configurations. 
Actually for toric Fano manifolds \cite{Yao17} constructed the optimal destabilizer as a possibly irrational but piecewise-linear convex function on the moment polytope. 
It implies that a single ideal sheaf can not generally optimize the stability indicator.  
Our proof shows that the weak geodesic ray attains the maximum in a suitable sense (see Remark \ref{geodesic ray achieves max}). 
It might be challenging to clarify whether the ray, constructed transcendentally  in the above, interpreted into certain algebraic singularities. 

Replacing the Ricci-Calabi functional with the H-functional $H(\omega)$, \cite{DS17} established the parallel equality and the corresponding optimal test configuration. 
In this formalism non-Archimedean D-energy is replaced with H-invariant of the test configurations.  
The idea of the present paper as well applied to this setting. 
In the final section we serve another simple proof of \cite{DS17}, Theorem 1.2, without using the deep result of \cite{CW14}, \cite{CSW15}. 

\begin{thmB}
Greatest lower bound of H-functional is given by a sequence of H-invariants: 
\begin{equation*}
\inf_{\omega} H(\omega) = \sup_{(\cX, \cL)} H(\cX, \cL). 
\end{equation*}
The infimum is achieved by the K\"ahler-Ricci flow. 
The supremum is achieved by the test configurations which is defined as the normalized blow-up of the associated multiplier ideal sheaves.  
\end{thmB}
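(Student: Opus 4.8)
The plan is to run the argument in close parallel to Theorem A, with the inverse Monge-Amp\`ere flow replaced by the normalized K\"ahler-Ricci flow and the non-Archimedean D-energy replaced by the entropy-type H-invariant. The structural difference I would exploit is that $H$ is an entropy, hence \emph{bounded} (convergent) rather than linearly growing along weak geodesics; this is exactly why no $L^2$-normalization appears in the statement, and it turns the duality into a direct one between the Archimedean and non-Archimedean entropy, governed by geodesic convexity of the entropy functional rather than by a Cauchy--Schwarz comparison.

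First I would record the easy inequality $\inf_\omega H(\omega)\geq\sup_{(\cX,\cL)}H(\cX,\cL)$. Fix a test configuration $(\cX,\cL)$ and let $\Phi_t$ be the associated weak geodesic ray issuing from a given $\omega$. The entropy is convex along weak geodesics, so $t\mapsto H(\omega_{\Phi_t})$ is convex; a slope computation identifies its \emph{asymptotic value} $\lim_{t\to\infty}H(\omega_{\Phi_t})$ with the non-Archimedean invariant $H(\cX,\cL)$. Since a convex function on $[0,\infty)$ with a finite limit at infinity is non-increasing, this forces $H(\omega)\geq H(\cX,\cL)$, and taking $\inf_\omega$ and $\sup_{(\cX,\cL)}$ gives the inequality.

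For the reverse inequality I would let the flow produce a matching sequence. Running the normalized K\"ahler-Ricci flow $\omega_t$ from $\omega_0$, the functional $H$ is monotone non-increasing, so $H(\omega_t)\downarrow h:=\inf_\omega H(\omega)$. Building on the Mabuchi geometry of the space of K\"ahler potentials, and on the technique of \cite{DH17} already used for Theorem A, I would construct a weak geodesic ray $\Phi$ asymptotic to $\omega_t$, arranged so that the asymptotic value of $H$ along $\Phi$ again equals $h$; it is here that the monotone convergence of the flow is used in place of the smooth Hamilton--Tian convergence of \cite{CW14}, \cite{CSW15}. For each $m\in\N$ I would then take the normalized blow-up of the multiplier ideal sheaf $\cJ(m\Phi)$ to obtain a test configuration $(\cX_m,\cL_m)$; following the scheme of \cite{BBJ15} underlying Theorem A, these approximate the ray and their H-invariants satisfy $H(\cX_m,\cL_m)\to h$. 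Together with the easy inequality this yields $\sup_{(\cX,\cL)}H(\cX,\cL)=h=\inf_\omega H(\omega)$, with the infimum realized by the flow and the supremum by the sequence $(\cX_m,\cL_m)$.

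The hard part will be the last convergence $H(\cX_m,\cL_m)\to h$. It splits into two pieces. First, that the non-Archimedean entropy of the normalized blow-up of $\cJ(m\Phi)$ converges, as $m\to\infty$, to the asymptotic value of $H$ along $\Phi$, for which I would lean on the subadditivity and semicontinuity of multiplier ideal sheaves together with the asymptotic comparison of the entropy with its algebraic counterpart. Second, and more delicate, that the geodesic ray inherited from the K\"ahler-Ricci flow is genuinely entropy-minimizing at infinity, i.e. its asymptotic value is exactly $h$. For the latter I would exploit the monotonicity of $H$ along the flow together with a slope estimate comparing the flow and the asymptotic ray, which is precisely the ingredient that lets the argument bypass the deep convergence results of \cite{CW14}, \cite{CSW15}.
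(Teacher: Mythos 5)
Your overall architecture (easy inequality via geodesic rays, Kähler--Ricci flow, asymptotic ray à la \cite{DH17}, multiplier-ideal blow-ups à la \cite{BBJ15}, then a squeeze) matches the paper, but two of your load-bearing steps are gaps. First, your proof of the easy inequality rests on (i) convexity of $t\mapsto H(\omega_{\phi^t})$ along weak geodesics and (ii) the identification $\lim_{t\to\infty}H(\omega_{\phi^t})=H(\cX,\cL)$. Neither is available: the known geodesic convexity (Berndtsson) is for the functional $D$, hence for $L$, not for the entropy $H(\mu_\phi\,\vert\,V^{-1}\omega_\phi^n)$; and along the $C^{1,1}$ ray of a test configuration the Monge--Amp\`ere measure $\omega_{\phi^t}^n$ degenerates, so $H(\omega_{\phi^t})$ is in general $+\infty$ and does not compute $H(\cX,\cL)$. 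The paper avoids both claims: it takes $f=-\dot{\phi}^0$ in the Legendre-transform description of the entropy (Proposition \ref{Legendre transformation}), giving $H(\omega_{\phi^0})\geq -\frac{d}{dt}\big\vert_{t=0}L(\phi^t)+F(\dot{\phi}^0)$, then uses convexity of $L$ to bound $-\frac{d}{dt}\big\vert_{t=0}L(\phi^t)\geq -\lim_{t\to\infty}t^{-1}L(\phi^t)=-L^{\NA}(\cX,\cL)$ and the $t$-independence of the Duistermaat--Heckman measure to get $F(\dot{\phi}^t)=F(\cX,\cL)$, which together with the slope formula (\ref{slope of H}) yields $H(\omega)\geq H(\cX,\cL)$.

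Second, on the reverse side your ``subadditivity and semicontinuity of multiplier ideal sheaves'' can only control the $\lct$/$L^{\NA}$ part of $H(\cX_m,\cL_m)=-L^{\NA}(\cX_m,\cL_m)+F(\cX_m,\cL_m)$; the transcendental term $F$ does not follow from ideal-sheaf semicontinuity, and your ``slope estimate comparing the flow and the asymptotic ray'' is precisely the missing ingredient. The paper supplies it by defining $\DH(\phi^t)$ for the (possibly non-differentiable) limit ray through its $d_p$-moments, proving $\DH(\phi_m^t)\geq\DH(\phi^t)$ via Demailly approximation together with the Lidskii-type inequality (Theorem \ref{Lidskii type inequality}), and --- crucially --- showing $F(\dot{\phi}^t)=F(\dot{\phi}_t)=0$, which is an exact consequence of the normalization $\frac{1}{V}\int_X e^{\rho}\omega^n=1$ along the flow; this zero is what identifies the radial limit with $\lim_{t\to\infty}H(\omega_{\phi_t})$. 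Finally, your assertion $H(\omega_t)\downarrow h:=\inf_\omega H(\omega)$ is circular at that stage: monotonicity only gives $H(\omega_{\phi_t})\downarrow\ell\geq\inf_\omega H(\omega)$, and $\ell=\inf_\omega H(\omega)$ (i.e.\ that the flow achieves the infimum) is a \emph{conclusion} of the squeeze $\inf_\omega H(\omega)\geq \sup H(\cX,\cL)\geq\liminf_m H(\cX_m,\cL_m)\geq \ell\geq \inf_\omega H(\omega)$, not an input. You should also note where the flow-specific analytic inputs enter: Perelman's estimate $\sup_X\phi_t\leq ct+A$ is needed to extend $\hat{\Phi}=\Phi+c\log\abs{\tau}$ across $X\times\{0\}$ and define $\cJ(m\hat{\Phi})$, and the Harnack estimate for the K\"ahler--Ricci flow (via \cite{DH17}) is what gives $d_p$-convergence of the fitting geodesics for all $p$, a luxury unavailable in the Theorem A setting.
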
 
In the H-functional setting the maximum is attained by the test configuration constructed by \cite{CSW15}. 
Strictly speaking it is not a genuine test configuration but endowed with an irrational $\C^*$-action. In the terminology of \cite{DS17} it is called {\em $\R$-degeneration}. 
Our argument does not construct the $\R$-degeneration in the limit, while it shows that the maximum is effectively approximated by the associated multiplier ideal sheaves. 

It is known that $H(\cX, \cL)>0$ for all $(\cX, \cL)$ iff $X$ is D-semistable so that the H-invariant is weaker than the non-Archimedean D-energy. 
For example in the toric case the optimal destabilizer for the H-functional gives a product family while the optimal destabilizer for the Ricci-Calabi functional has jut two components in the central fiber. 
It indicates that H-optimizer corresponds to the Harder-Narasimhan filtration for vector bundles and D-optimizer even takes on a role of the Jordan-H\"older filtration for semistable ones. 
See \cite{CHT17} for the detail.  
The construction and comparison of these two destabilizers could be interesting from the viewpoint of birational geometry and should be investigated in the future work. 

Just when the author was going to post the preprint on arXiv, he was informed the appearing work \cite{Xia19} of M. Xia. 
It solves the {\em metrized} version of Theorem A, and of Donaldson's original conjecture for arbitrary compact K\"ahler manifolds, admitting finite energy geodesic rays in the supremum (so that the non-Archimedean D-energy is replaced with the {\em radial D-energy} of the geodesic ray).  
Note that in the scalar curvature setting the infimum requires singular $\phi$ because we do not have a smooth solution of the Calabi flow yet.  
Not a few ideas are in common and we even need \cite{Xia19}, Lemma 5.1 critically in proving Theorem A. 
We focus on the Fano case but instead answer to the original version of the question and moreover clarify the relation with multiplier ideals. 

\begin{ackn}
The author express his gratitude to Mingchen Xia. 
He especially pointed out the lack of the discussion in our previous version. 
After the communication I realized that the proof requires \cite{Xia19}, Lemma 5.1. 
I am grateful to A. Futaki who introduced \cite{Don05} at an early age of my Ph.D period and to Y. Odaka who enlightened me on the problem of optimal destabilizer. 
I would also like to thank T. Collins, E. Inoue and R. Takahashi for helpful discussions. 
This research was supported by JSPS KAKENHI Grant Number 15H06262 and 17K14185. 
\end{ackn}


\section{Preliminary} 

\subsection{Ricci curvature formulation}
We first give the variational setting for the K\"ahler-Einstein problem. 
Throughout the paper $X$ is an $n$-dimensional Fano manifold and $\omega$ denotes a K\"ahler metric whose cohomology class is the first Chern class $c_1(X)$. 
As in the introduction, we fix a reference metric to represent the metric by a function $\phi$. 
In words of the anti-canonical line bundle $-K_X$, 
one has a fiber metric $h_0$ with the Chern curvature $\omega_0$. 
Then $h_0e^{-\phi}$ defines another smooth fiber metric so that $\omega_\phi= \omega_0 +dd^c\phi$ gives the curvature. 
We freely chose appropriate description of the metric going back and forth between $\omega$, $\phi$ and the fiber metric $h_0e^{-\phi}$. 
Let us denote by $\cH=\cH(X, \omega_0)$, the collection of all smooth $\phi$ for which $\omega = \omega_\phi$ is strictly positive. 
We first introduce the Ricci-Calabi functional in $\phi \in \cH$, which is the curvature integration 
\begin{equation}
R(\phi):= \frac{1}{V}\int_X (e^\rho -1)^2 \omega^n. 
\end{equation}
This gives the analogue of the classical functional 
\begin{equation*}
C(\phi):= \frac{1}{V}\int_X (S_\omega -\hat{S})^2 \omega^n 
\end{equation*}
introduced by E. Calabi. In the above $\hat{S}$ denotes the mean value of the scalar curvature $S_\omega$. 
Compared to the Calabi functional, it is relatively recent result \cite{Don15} where the infinite-dimensional moment map picture for the Ricci-Calabi functional was given. 
The picture regards this functional as the square of the moment map and provides a natural prospect for the variational approach to the K\"ahler-Einstein problem. 
The sophisticated idea is to consider for each direction $\d \phi$ the pairing 
\begin{equation*}
\d \phi \mapsto \frac{1}{V}\int_X \d \phi (e^\rho -1) \omega^n. 
\end{equation*} 
It moreover defines an {\em exact} 1-form on $\cH$. 
Here we introduce the potential called D-energy: 
\begin{equation}\label{D-energy}
D(\phi)=L(\phi)-E(\phi):= -\log\frac{1}{V}\int_X e^{-\phi +\rho_0} \omega_0^n 
-E(\phi).  
\end{equation}
The above second term 
\begin{equation}
E(\phi) := \frac{1}{(n+1)V}\sum_{i=0}^{n} \int_X \phi \omega^{i} \wedge \omega_0^{n-i} 
\end{equation}
is called the Monge-Amp\`ere energy, because the differential is designed to be the Monge-Amp\`ere measure: $(dE)_\phi= V^{-1}\omega^n = V^{-1}\omega_\phi^n$. 
For the first term $L(\phi)$ we define the canonical probability measure 
\begin{equation} 
\mu_\phi := \frac{e^{-\phi+\rho_0}\omega_0^n}{\int_X e^{\rho_0}\omega_0^n} = V^{-1}e^\rho \omega^n  . 
\end{equation}
In terms of these probability measures the D-energy is characterized by the property $d_\phi D= \mu_\phi -\omega_\phi^n$. 
The critical point condition $d_\phi D=0$ is well-known equivalent to the K\"ahler-Einstein equation.  
The functional first appeared in \cite{BM85} and was written down to the above form by \cite{Din88}. 
It precisely plays the role of Kemp-Ness functional in the finite-dimensional GIT. 
As we review in the next subsection, the D-energy is convex with respect to the natural metric structure. 
Asking when the energy functional is proper we are naturally lead to the definition of D-stability. 

More recently in \cite{CHT17}, we studied the gradient flow of the D-energy 
\begin{equation*}
\frac{\partial}{\partial t} \phi 
= 1- e^\rho 
\end{equation*}
and particularly proved that the long-time solution exists. 

\begin{thm}[\cite{CHT17}, Theorem]
Given an initial data, the inverse Monge-Amp\`ere flow (\ref{inverse Monge-Ampere flow})  has the unique solution $\phi= \phi_t$ for all $t \in [0, \infty)$. 
Moreover, $E(\phi_t)$ is constant, $D(\phi_t)$ and $R(t)=\frac{d}{dt} D(\phi_t)$ are non-increasing. 
\end{thm}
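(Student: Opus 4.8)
The plan is to read (\ref{inverse Monge-Ampere flow}) as a scalar parabolic complex Monge--Amp\`ere equation and to proceed in three stages: short-time existence, long-time existence via a priori estimates, and the monotonicity assertions. First I would make the dependence of $\rho$ on $\phi$ explicit. From $\Ric\omega_0-\omega_0=dd^c\rho_0$ and $\Ric\omega_\phi=\Ric\omega_0-dd^c\log(\omega_\phi^n/\omega_0^n)$ one finds
\begin{equation*}
\rho=\rho_0-\phi-\log\frac{\omega_\phi^n}{\omega_0^n}+c(\phi),\qquad e^{c(\phi)}=\frac{V}{\int_X e^{\rho_0-\phi}\omega_0^{\,n}},
\end{equation*}
where the spatial constant $c(\phi)$ is fixed by $\int_X e^{\rho}\omega^n=V$. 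Hence the flow reads $\partial_t\phi=1-e^{\rho}$, a fully nonlinear equation whose linearization in the unknown has leading part $e^{\rho}\Delta_\omega$; since $e^{\rho}>0$ this has the sign of the heat operator, so the equation is forward parabolic, and standard parabolic theory in H\"older spaces produces a unique smooth solution on a maximal interval $[0,T_{\max})$, with $\omega_{\phi_t}>0$ preserved for short time.

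For long-time existence I would establish a priori estimates uniform on every finite interval $[0,T]$. Differentiating the flow in $t$, the speed $\psi=\dot\phi$ satisfies a linear parabolic equation $\partial_t\psi=e^{\rho}\Delta_\omega\psi-e^{\rho}(e^{\rho}-A)$ with $A=\tfrac1V\int_X e^{2\rho}\omega^n$, so the maximum principle controls $\psi$ on $[0,T]$, hence bounds $e^{\rho}=1-\psi$ above and below and controls $\osc_X\rho$. Integrating the bounded $\dot\phi$ gives a finite-time $C^0$ bound on $\phi$; inserting it into $e^{\rho}\omega_\phi^n=\tfrac{V e^{\rho_0-\phi}}{\int_X e^{\rho_0-\phi}\omega_0^n}\,\omega_0^n$ bounds the Monge--Amp\`ere density from above and below. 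An Aubin--Yau second-order estimate then bounds $\Tr_{\omega_0}\omega_\phi$, yielding uniform parabolicity $C^{-1}\omega_0\leq\omega_\phi\leq C\omega_0$; the parabolic Evans--Krylov estimate upgrades this to $C^{2,\alpha}$ and Schauder bootstrapping to $C^\infty$. All bounds being uniform on $[0,T]$, the solution extends past $T$, so $T_{\max}=\infty$, and uniqueness follows from the comparison principle for the linearized flow. I expect this to be the main obstacle: keeping the equation uniformly parabolic on finite time intervals (the second-order estimate) is the technical heart.

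The monotonicity statements are comparatively soft. Since $\int_X e^{\rho}\omega^n=\int_X\omega^n=V$, we get $\tfrac{d}{dt}E(\phi_t)=\tfrac1V\int_X(1-e^{\rho})\omega^n=0$, so $E$ is constant. Using $d_\phi D=V^{-1}(e^{\rho}-1)\omega^n$ from the excerpt,
\begin{equation*}
\frac{d}{dt}D(\phi_t)=\frac1V\int_X\dot\phi\,(e^{\rho}-1)\,\omega^n=-\frac1V\int_X(e^{\rho}-1)^2\,\omega^n\leq 0,
\end{equation*}
so $D$ is non-increasing; in particular $R(t)=-\tfrac{d}{dt}D(\phi_t)=\tfrac1V\int_X(e^{\rho}-1)^2\omega^n\geq 0$ is the Ricci--Calabi functional evaluated along the flow.

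It remains to show $R(t)$ is non-increasing. The key observation is that the above computation exhibits (\ref{inverse Monge-Ampere flow}) as the gradient flow of $D$ for the Mabuchi metric $g_\phi(\alpha,\beta)=\tfrac1V\int_X\alpha\beta\,\omega^n$, and that $R(t)=\norm{\dot\phi}_g^2$ is exactly the squared speed of this gradient flow. Since $E$ is affine along $g$-geodesics while $L$ is geodesically convex (Berndtsson), $D$ is geodesically convex, i.e. $\mathrm{Hess}\,D\geq 0$. For the gradient flow of a convex functional the speed obeys $\tfrac{d}{dt}\norm{\dot\phi}_g^2=-2\,\mathrm{Hess}\,D(\dot\phi,\dot\phi)\leq 0$, which is the desired monotonicity. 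The one delicate point here is to justify this covariant identity in the weak $L^2$ (Mabuchi) geometry; I would do so directly along the smooth flow, either by the explicit second-variation formula for $D$ or by appealing to the contraction (EVI) property of gradient flows of geodesically convex functionals.
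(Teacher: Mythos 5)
The paper itself does not prove this theorem --- it is quoted verbatim as a citation to \cite{CHT17} --- so your proposal has to be judged against the proof in that reference, and on its own merits. Your Stage 1 (the formula for $\rho$, the normalizing constant $c(\phi)$, parabolicity with leading symbol $e^{\rho}\Delta_\omega$, short-time existence and uniqueness) is correct, and your Stage 3 is essentially right: $E$ constant and $D$ non-increasing are the one-line computations you give, you correctly repair the sign slip in the statement (what is non-increasing is $R(t)=-\frac{d}{dt}D(\phi_t)=\frac1V\int_X(e^{\rho}-1)^2\omega^n$), and the monotonicity of $R$ is indeed proved in \cite{CHT17} by the direct second-variation computation you sketch, where the needed positivity of the Hessian of $D$ along the flow is a weighted Poincar\'e inequality of Berndtsson--Futaki type (spectral gap $\geq 1$ for a twisted Laplacian), not an abstract EVI argument.

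The genuine gap is in Stage 2, at the sentence ``the maximum principle controls $\psi$ on $[0,T]$.'' Your evolution equation $\partial_t\psi=e^{\rho}\Delta_\omega\psi-e^{\rho}(e^{\rho}-A)$ is correct (and note $A=1+R(t)\leq 1+R(0)$ once monotonicity of $R$ is in hand, which does give the harmless bound $\inf_X e^{\rho_t}\geq c\,e^{-Ct}$). But the bound you actually need --- an upper bound on $e^{\rho}$, i.e.\ a lower bound on $\psi$ --- does not follow: at a spatial minimum of $\psi$, setting $y(t)=\sup_X e^{\rho_t}=1-\min_X\psi$, the maximum principle yields only
\begin{equation*}
\frac{dy}{dt}\;\leq\; y\,(y-A),
\end{equation*}
and the comparison ODE $z'=z(z-A)$ blows up in finite time as soon as $z(0)>A$; since $A\leq 1+R(0)$ is bounded, nothing rules this out for general initial data. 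The reaction term is quadratic with the destabilizing sign --- equivalently, the linearization $e^{\rho}(\Delta_\omega+1)+(\text{nonlocal})$ has a \emph{positive} zeroth-order part, so the stability of the flow is visible only spectrally (eigenvalue $\geq 1$), which a pointwise maximum principle cannot see. Everything downstream in your scheme (the finite-time $C^0$ bound on $\phi$ from integrating $\dot\phi$, the two-sided density bound, hence the Aubin--Yau and Evans--Krylov steps) hinges on exactly this missing estimate, so the bootstrap collapses at its first step; contrary to your assessment, the heart of the matter is the $C^0$-level control of $e^{\rho}$, not the second-order estimate. This is consistent with the present paper's own Remark in Section 3.2 that upper bounds on the Ricci potential along this flow are highly non-trivial (there in the uniform-in-time form), and it is why the long-time existence argument of \cite{CHT17} proceeds through integral and pluripotential-theoretic a priori estimates built on the monotone quantities of the flow rather than through a scalar maximum principle applied to $\dot\phi$.
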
 
This is our key tool for speculating in which direction the D-energy {\em worstly} decays. 
In our notation the normalized K\"ahler-Ricci flow is written as 
\begin{equation*}
\frac{\partial}{\partial t} \phi 
= -\rho.   
\end{equation*}
The both flows converge to the K\"ahler-Einstein metric if it exists. On the other hand, the two flows show different behaviors when $\rho$ tends to be big. It is precisely the situation we are interested in.  

\subsection{Geodesic of finite energy metrics}

One remarkable property is that the Monge-Amp\`ere energy is affine and the D-energy  is convex along any geodesic for Mabuchi's $L^2$ structure. 
It strongly motivate us to exploit the general framework of convex optimization. 
In fact we may consider general $L^p$ structure for the space of K\"ahler metrics and especially need to consider $L^1$ geometry. 
First from $dd^c$-lemma any smooth function $u$ can be seen as a tangent vector at $\phi$. 
The $L^p$-norm 
\begin{equation}\label{Lp norm}
\norm{u}_p:= \bigg[ \frac{1}{V}\int_X \abs{u}^p \omega^n \bigg]^\frac{1}{p} 
\end{equation}
hence defines the distance $d_p$ on $\cH(X, \omega_0)$.  
The metric space is not complete so that even if the energy is proper existence of a minimizer is not guaranteed.  
Therefore the completion $\cE^p=\cE^p(X, \omega_0)$ comes to the forefront in the variational approach to the K\"ahler-Einstein problem. 
This is the main reason that we need to handle with a singular fiber metric $h_0e^{-\phi}$ for which $\phi$ is only assumed to be locally integrable. 
Such an $L^1$-function is called $\omega_0$-plurisubharmonic function (psh for short) if the curvature current $\omega=\omega_0 +dd^c\phi$ is semipositive. 
One can see \cite{BBGZ13}, \cite{BBEGZ11}, \cite{BBJ15}, \cite{Dar15}, \cite{Dar17a}, \cite{Dar17b}, and the textbook \cite{GZ17} for the developments in this area. 

Let us especially present the construction of $\cE^1$ which is indeed closely related with the Monge-Amp\`ere energy. 
It is well-known that we have the satisfactory definition of the product current $\omega_\phi^n$ and hence $E(\phi)$ for any bounded $\omega_0$-psh function $\phi$, by the celebrated work of Bedford-Taylor. 
To go further, for any $\omega_0$-psh $\phi$ we define the Monge-Amp\`ere energy as 
\begin{equation}
E(\phi) := \inf\bigg\{ E(\psi): \psi \in L^\infty \cap \PSH(X, \omega_0), \psi \geq \phi\bigg\} \in \R \cup \{-\infty\}. 
\end{equation} 
The function is called {\em finite energy} if $E(\phi)>-\infty$. 
We define the distance $d_1(\phi, \psi)$ of finite energy metrics approximating by  decreasing sequences of smooth $\omega_0$-psh functions. 

\begin{thm}[Special case of \cite{Dar15}, Theorem 2]
The space $(\cE^1(X, \omega_0), d_1)$ of all finite energy psh functions gives the completion of $(\cH(X, \omega_0), d_1)$. 
Moreover $d_1$ gives the coarsest refinement of the $L^1$-topology for psh functions so that the Monge-Amp\`ere energy is continuous. 
It follows that the D-energy is also continuous in this strong topology. 
\end{thm}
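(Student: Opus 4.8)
The first two assertions are precisely \cite{Dar15}, Theorem~2 specialized to the Fano polarization $(X,-K_X)$: the anti-canonical class is ample, so the completion statement and the characterization of $d_1$ as the coarsest refinement of the $L^1$-topology rendering $E$ continuous apply verbatim. Thus the only point requiring a genuine argument is the last sentence. Since $D=L-E$ and $E$ is $d_1$-continuous by the second assertion, it suffices to prove that $L(\phi)=-\log\frac1V\int_X e^{-\phi+\rho_0}\omega_0^n$ is continuous along $d_1$-convergent sequences in $\cE^1$.

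First I would record the elementary features of $L$: it is nondecreasing in $\phi$, it satisfies $L(\phi+c)=L(\phi)+c$ (matching $E(\phi+c)=E(\phi)+c$, so that $D$ is invariant under additive constants), and it is concave. The upper semicontinuity of $L$ in the $L^1$-topology, hence in $d_1$, is immediate from Fatou's lemma: if $\phi_j\to\phi$ in $L^1$ with $\phi_j\to\phi$ a.e.\ along a subsequence, then $\int_X e^{-\phi+\rho_0}\omega_0^n\le\liminf_j\int_X e^{-\phi_j+\rho_0}\omega_0^n$, i.e.\ $L(\phi)\ge\limsup_j L(\phi_j)$ (the smooth weight $\rho_0$ being bounded plays no role here). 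The whole difficulty is therefore the matching lower bound.

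For the lower semicontinuity I would use a monotone squeeze. Given $\phi_j\to\phi$ in $d_1$, set $P_k:=P\bigl(\inf_{j\ge k}\phi_j\bigr)$, the largest $\omega_0$-psh minorant, and $Q_k:=(\sup_{j\ge k}\phi_j)^{*}$; these increase resp.\ decrease, and as a standard consequence of $d_1$-convergence (see \cite{Dar15}) one has $P_k\nearrow\phi$ and $Q_k\searrow\phi$ in $\cE^1$, while $P_k\le\phi_j\le Q_k$ for every $j\ge k$. Monotonicity of $L$ gives $L(P_k)\le L(\phi_j)\le L(Q_k)$ for $j\ge k$, so it is enough to show $L(P_k)\to L(\phi)$ and $L(Q_k)\to L(\phi)$. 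Along the decreasing sequence $e^{-Q_k}\nearrow e^{-\phi}$, and the monotone convergence theorem yields $L(Q_k)\to L(\phi)$ with no further input (gracefully allowing $L(\phi)=-\infty$). Along the increasing sequence $e^{-P_k}\searrow e^{-\phi}$, and the convergence $L(P_k)\to L(\phi)$ follows from the dominated convergence theorem once the integrands are uniformly integrable.

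The main obstacle is exactly this uniform integrability of $\{e^{-\phi_j}\}$ (equivalently of $\{e^{-P_k}\}$). Each $e^{-\phi_j}$ is integrable because a finite-energy potential has vanishing Lelong numbers, so Skoda's theorem applies to it; the nontrivial point is uniformity. I would obtain it from Zeriahi's uniform version of Skoda's integrability theorem, which furnishes $\alpha>0$ and $C$ with $\int_X e^{-\alpha(\psi-\sup_X\psi)}\omega_0^n\le C$ for all $\omega_0$-psh $\psi$, combined with control of the normalization: $L^1$-convergence bounds $\int_X\phi_j\,\omega_0^n$, and the compactness estimate $\sup_X\psi\le\frac1V\int_X\psi\,\omega_0^n+C'$, valid for all $\omega_0$-psh $\psi$, then bounds $\sup_X\phi_j$ from both sides. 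The delicate part is passing from Skoda's exponent $\alpha$, which may be smaller than $1$, to the exponent $1$ needed for $e^{-\phi}$; here I would exploit that $d_1$-convergence controls the Monge-Amp\`ere energies and hence prevents concentration of mass of $e^{-\phi_j}$, upgrading the a priori bound to genuine uniform integrability at exponent $1$. Vitali's theorem then closes the argument, and together with the $d_1$-continuity of $E$ this gives the $d_1$-continuity of $D$.
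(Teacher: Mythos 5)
The statement you are proving is quoted by the paper from \cite{Dar15}, Theorem 2, and the paper itself offers no argument beyond that citation: the first two assertions are Darvas's theorem, and the continuity of $D=L-E$ is asserted as a known consequence, the continuity of $L$ on $(\cE^1,d_1)$ being part of the standard literature (cf.\ \cite{BBEGZ11}, \cite{BBJ15}). So the substance of your proposal is your direct proof of the $d_1$-continuity of $L$, and its architecture --- Fatou for upper semicontinuity, then a monotone squeeze $P_k\leq\phi_j\leq Q_k$ with $Q_k\searrow\phi$ handled by monotone convergence --- is the standard and correct one. Two caveats on the squeeze itself: the full-tail envelope $P_k=P(\inf_{j\geq k}\phi_j)$ of a merely $d_1$-convergent sequence is not a priori an element of $\cE^1$; the known lemma produces an $\cE^1$-minorant after passing to a subsequence with summable distances $\sum_j d_1(\phi_j,\phi)<\infty$, and one then recovers the full sequence by the subsequence principle for limits of real numbers. (The claim $Q_k\searrow\phi$ a.e.\ uses Hartogs' lemma and is fine.)

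The genuine gap is your final paragraph. Zeriahi's uniform Skoda theorem gives an exponent $\alpha$ tied to the $\alpha$-invariant of $(X,-K_X)$, which may well be smaller than $1$, and no control of $\sup_X\phi_j$ or of the normalization can raise that exponent: uniformity across \emph{all} $\omega_0$-psh functions at exponent $1$ simply fails in general. The proposed bridge --- that ``$d_1$-convergence controls the Monge-Amp\`ere energies and hence prevents concentration of mass of $e^{-\phi_j}$'' --- is circular: uniform integrability of $\{e^{-\phi_j}\}$ along the sequence is, by Vitali (or Scheff\'e, given a.e.\ convergence), essentially equivalent to the convergence $\int_X e^{-\phi_j}\omega_0^n\to\int_X e^{-\phi}\omega_0^n$ that you are trying to prove; it is a consequence of the theorem, not an available input. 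Fortunately the step is unnecessary within your own scheme. Once a fixed minorant $P_{k_0}\in\cE^1$ is secured, it has vanishing Lelong numbers, so Skoda's theorem applied to the \emph{single} function $P_{k_0}$ gives $e^{-pP_{k_0}}\in L^1(\omega_0^n)$ for \emph{every} $p\geq 1$ --- no uniformity over the sequence is needed. Then $e^{-P_k}\leq e^{-P_{k_0}}$ for $k\geq k_0$, and dominated convergence along $P_k\nearrow\phi$ yields $L(P_k)\to L(\phi)$, closing the squeeze. Replacing your last paragraph by this remark turns the proposal into a complete proof, and in fact reproduces the argument behind the continuity statement the paper implicitly imports from \cite{BBEGZ11}.
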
 

Note that \cite{Dar15} gave a similar construction for general $(\cE^p(X, \omega_0), d_p)$. 

We next review a certain construction of geodesics. 
Henceforth we distinguish the geodesic $\phi^t$ from the inverse Monge-Amp\`ere flow $\phi_t$, by using the superscript. 
The singularity of the metric again inevitably appears if one considers a geodesic. 
Indeed the $L^2$-geodesic segment $\phi^t$ $(t \in [0, 1])$ in $\cE^2(X, \omega_0)$ has at best $C^{1, 1}$-regularity even if the endpoints are assumed to be smooth. 
For $L^1$-geodesic it is not even unique, as it was observed in \cite{Dar17a}. 
Given smooth endpoints there however exists a path $\phi^t$ which is characterized as the solution of the degenerate Monge-Amp\`ere equation and defines a geodesic for all $d_p$. 
We follow \cite{Bern11} for the construction. 
Let $\phi, \psi \in \cH$ and $a, b \in \R$. 
Take the complex variable $\tau$ of the annulus $A:= \{ \tau \in \C: e^{-b} < \abs{\tau} <e^{-a}\}$, as the translation of the time parameter $t= -\log \abs{\tau}$. 
Let us consider a function $\Psi \in \PSH(X\times A, p_1^*\omega_0 )$ with the boundary condition $\Psi(x, e^{-a}) \leq \phi(x), \Psi(x, e^{-b}) \leq \psi(x)$ and define the Peron-Bremermann type upper-semicontinuous envelope as 
\begin{equation}\label{PB envelope}
\Phi(x, \tau) 
:= {\sup}^* \Psi(x, \tau). 
\end{equation} 
The construction is also equivalent to the terminology {\em psh geodesic} in \cite{BBJ15}. 
As a standard fact, we have $\Phi(x, e^{-a}) = \phi(x), \Phi(x, e^{-b}) = \psi(x)$. 
Since we assume $\phi, \psi$ bounded $\Phi$ is also bounded. 
A standard argument of the pluripotential theory deduces that the $(n+1)$-variable Monge-Amp\`ere measure $(p_1^*\omega_0+dd^c_{x, \tau} \Phi)^{n+1}$ vanishes over $X\times A$. 
By the computation of \cite{Sem92} this is equivalent to say that $E(\phi^t)$ is affine. 
More generally, $E(\phi^t)$ is convex if $p_1^*\omega_0+dd^c_{x, \tau} \Phi \geq 0$. 
It follows that $\phi^t$ is weak geodesic for the $L^2$-structure. 
Moreover, by \cite{Dar15}, Theorem 4.17, $\phi^t$ defines a geodesic in the $L^p$-Finsler metric space $(\cE^p, d_p)$ for an arbitrary $p \geq 1$.  

It is rather recently proved by \cite{CTW17} that $\Phi$ has optimal $C^{1, 1}$-regularity for the smooth boundary data.  
From \cite{Dar15}, Remark 2.5, this geodesic of envelope form has a constant speed in $d_p$. It means that 
\begin{equation}\label{constant speed}
d_p(\phi^t, \phi^s) = d_p(\phi, \psi)\abs{\frac{t-s}{b-a}} 
\end{equation} 
for all $t, s$. Not all geodesics in $(\cE^1, d_1)$ satisfies the property. See also the discussion in \cite{Dar17a}, \cite{BBJ15}.  

Convexity of the D-energy is deeply related with H\"olmander $L^2$-estimate for the $\bar{\partial}$-equation and was  established by the seminal work of B. Berndtsson. 

\begin{thm}[\cite{Bern11}, Theorem 1.1]
Let $\Phi$ be a (possibly non-smooth) function on $X \times A$ such that $p_1^*\omega_0+dd^c_{x, \tau} \Phi \geq 0$ holds in the sense of current. 
Then for the associated segment $\phi^t$, $D(\phi^t)$ is a convex function. 
\end{thm}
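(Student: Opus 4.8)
The plan is to isolate the analytic content in the linear-energy term. Writing $D=L-E$ as in (\ref{D-energy}), recall from the Semmes computation reproduced above that, for the geodesic $\phi^t$ associated to the envelope (\ref{PB envelope}), the vanishing of the $(n+1)$-dimensional Monge--Amp\`ere measure makes $E(\phi^t)$ affine (and, under the bare hypothesis $p_1^*\omega_0+dd^c_{x,\tau}\Phi\geq 0$, at worst convex), by \cite{Sem92}. Since subtracting an affine function preserves convexity, the whole statement reduces to the convexity of $t\mapsto L(\phi^t)$.

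Next I would reformulate $L$ geometrically. The hypothesis says exactly that $h_0e^{-\Phi}$ is a (possibly singular) semipositively curved Hermitian metric on the pull-back $p_1^*(-K_X)$ over $X\times A$, and $e^{-L(\phi^\tau)}$ is, up to the fixed constant $\int_X e^{\rho_0}\omega_0^n$, the total mass of the measure this metric induces on the fibre $X_\tau$. A formal differentiation against the canonical measure gives
\[
L'(\phi^t)=\int_X\dot\phi^t\,d\mu_{\phi^t},\qquad L''(\phi^t)=\int_X\ddot\phi^t\,d\mu_{\phi^t}-\mathrm{Var}_{\mu_{\phi^t}}(\dot\phi^t),\qquad \mu_{\phi^t}=V^{-1}e^{\rho}\omega^n .
\]
The subgeodesic condition is the pointwise bound $\ddot\phi^t\geq|\partial\dot\phi^t|_{\omega}^{2}$, so that $L''(\phi^t)\geq\int_X|\partial\dot\phi^t|_{\omega}^{2}\,d\mu_{\phi^t}-\mathrm{Var}_{\mu_{\phi^t}}(\dot\phi^t)$.

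The heart of the matter is therefore the weighted Poincar\'e inequality
\[
\mathrm{Var}_{\mu_\phi}(u)\leq\int_X|\partial u|_{\omega}^{2}\,d\mu_\phi
\]
for the measure induced by the positively curved metric on $-K_X$. This is where the positivity is genuinely consumed: it follows from a H\"ormander/Bochner--Kodaira $L^2$-estimate on $(X,\omega)$ twisted by $-K_X$, the curvature $\omega$ of the metric furnishing the lower bound that solves the associated $\bar\partial$-equation. Equivalently, and more cleanly for the non-smooth case, one packages this as Berndtsson's positivity of direct images: the line bundle $\pi_*\big(K_{X\times A/A}\otimes p_1^*(-K_X)\big)\cong\mathcal{O}_A$ is trivial of rank one, its $L^2$-metric has $-\log\|1\|^2=L+\mathrm{const}$ as weight, and semipositivity of the total-space metric forces this weight to have semipositive curvature, i.e.\ to be subharmonic on $A$. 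I expect this curvature computation, which is the genuine depth of Berndtsson's theorem, to be the main obstacle.

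It remains to descend to the statement. Since $\Phi$ depends only on $t=-\log|\tau|$ it is $S^1$-invariant, and a rotation-invariant subharmonic function on the annulus $A$ is convex in $\log|\tau|$; hence $L(\phi^t)$, and with it $D(\phi^t)$, is convex. To remove the smoothness assumed in the $L^2$-estimate I would approximate $\Phi$ from above by smooth subgeodesics, apply the smooth case, and pass to the decreasing limit: this is legitimate because $E$ is monotone-continuous and $D$ is continuous in the strong $\cE^1$-topology by the theorem of \cite{Dar15} recalled above, while a decreasing limit of convex functions is again convex.
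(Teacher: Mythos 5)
The paper does not actually prove this statement: it is imported wholesale as \cite{Bern11}, Theorem 1.1, so the only meaningful comparison is with Berndtsson's own argument, and your outline does follow that route — split $D=L-E$, isolate the convexity of $L$, identify the analytic heart as the H\"ormander/Bochner--Kodaira estimate, equivalently the positivity of the rank-one direct image $\pi_*\big(K_{X\times A/A}\otimes p_1^*(-K_X)\big)\cong \cO_A$ whose $L^2$-weight is $L$ up to a constant, then use $\S^1$-invariance and a decreasing regularization. You locate the place where positivity is consumed correctly, but you also explicitly decline to carry out that curvature computation; since the direct-image positivity you invoke \emph{is} the theorem under review in its rank-one incarnation, what you have is a correct reduction plus a citation — which is in effect the same stance the paper itself takes, rather than an independent proof.

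There is, however, one step in your write-up that fails as written. Under the bare hypothesis $p_1^*\omega_0+dd^c_{x,\tau}\Phi\geq 0$, the energy $E(\phi^t)$ is only \emph{convex}, and subtracting a convex function does not preserve convexity; your reduction to $L$ is available only when the segment is a genuine weak geodesic, i.e. when $(p_1^*\omega_0+dd^c_{x,\tau}\Phi)^{n+1}=0$ and $E(\phi^t)$ is affine. This is not a removable technicality: along general subgeodesics $D$ can genuinely fail to be convex. For instance, take $\phi^t=\phi_0+\e e^t\chi$ with $\chi>0$ smooth and non-constant; for small $\e$ this is a subgeodesic over any fixed annulus, and
\begin{equation*}
\frac{d^2}{dt^2}\bigg\vert_{t=0} D(\phi^t)
= \e\bigg[\int_X \chi\, d\mu_{\phi_0}-\frac{1}{V}\int_X \chi\,\omega_{\phi_0}^n\bigg]+O(\e^2),
\end{equation*}
which is negative for suitable $\chi$ whenever $\mu_{\phi_0}\neq V^{-1}\omega_{\phi_0}^n$, i.e. whenever $\phi_0$ is not K\"ahler--Einstein. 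So the statement as transcribed in the paper is itself too strong: Berndtsson's theorem gives convexity of $L$ along subgeodesics and of $D$ along geodesics, which is the only form the paper ever uses (every segment and ray it feeds in is of envelope form). Your parenthetical ``at worst convex'' must therefore be replaced by the geodesic hypothesis; once that is done, and granting the deferred H\"ormander estimate, the remaining details in your sketch (averaging over the $\S^1$-action if $\Phi$ is not rotation-invariant, and regularizing by $(p_1^*\omega_0+\e\, \i\, d\tau\wedge d\bar\tau)$-psh approximants, since $p_1^*\omega_0$ is degenerate in the $\tau$-direction, before passing to decreasing limits) are routine.
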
 

\subsection{Non-Archimedean energies and norms of the test configuration}

The famous Hilbert-Mumford criterion in GIT tells that properness of the Kemp-Ness functional is examined in each direction for a one-parameter subgroup. 
Once a given polarized manifold $(X, L)$ was embedded to the projective space each one-parameter subgroup of the linear transformation induces a degeneration $(\cX, \cL)$ called {\em test configuration}. 
It is then natural to ask the asymptotic behavior of D-energy along the degeneration. 
In the scalar curvature setting for a general polarized manifold 
\cite{Don02} first gave the intrinsic definition of a test configuration and introduced the Donaldson-Futaki invariant in relation to asymptotic behavior of the K-energy functional.  
The relationship between these invariants and energies for general test configurations was completed by \cite{Berm16}, \cite{BHJ15}, and \cite{BHJ16}. 

In this paper we first assume that any test configuration $(\cX, \cL)$ is a $\G_m$-equivariant family of $\Q$-polarized schemes, which is defined over the affine line $\A^1$. 
More generally we take account of the case when $\cL$ is relatively semiample. 
From the assumption the family is trivial outside of the origin and the generic fiber is isomorphic to the anti-canonical polarization $(X, -K_X)$. 
In terms of the equivariant isomorphism 
$\cX\vert_{\A^1\setminus \{0\}} \simeq \cX \times (\A^1\setminus \{0\})$, it is convenient to represent a point of $\cX\vert_{\A^1\setminus \{0\}}$ as $(x, \tau)$, where $x \in X$ and $\tau$ is the affine coordinate centered on $0 \in \A^1$. 
Moreover, $\cX$ may be assumed to be a normal variety. See \eg \cite{BHJ15} for the  detail discussion for the singularities. 

From an analytic point of view, each of the above degeneration can be regarded as the ray in the space of K\"ahler potentials. 
\begin{dfn}
Let us endow with $\cL$ a semipositive curvature fiber metric, defined over the unit disk $\B \subset \A^1$. 
It gives a function $\Phi$ on the punctured space such that the isomorphism $\cX\vert_{\B \setminus \{0\}} \simeq X\times (\B \setminus \{0\})$ translates the curvature form into $p_1^*\omega_0+dd^c_{x, \tau} \Phi$. 
We then define the associated ray $\phi^t$ on $\cH$ as 
\begin{equation*}
\phi^t(x):= \Phi(x, e^{-t}). 
\end{equation*} 
A ray of this form is called compatible with the test configuration. 
\end{dfn}
Any two compatible rays $\phi^t$ and $\psi^t$ can be considered to share the same asymptotic behavior because of the bound $\abs{\Phi -\Psi} \leq C $ which is uniform in $t$. 

For the same reason, one may even consider a non-smooth but bounded $\Phi$ for which $\omega^n=\omega_{\phi^t}^n$ and $E(\phi^t)$ is properly defined as we already mentioned. 
In \cite{Berm16}, inspired by \cite{Bern11}, a weak geodesic ray $\Phi$ associated with the test configuration was in fact constructed as the Peron-Bremermann envelope with the prescribed boundary value. 
Let us take a function $\Psi$ on $X \times (\B \setminus \{0 \})$ for which the corresponding fiber metric is extended to a singular fiber metric of $\cL$, so that the curvature is semipositive in the sense of current. 
The associated weak geodesic ray is defined as the upper-semicontinuous  envelope of $\Psi$ with the boundary condition $\Psi(x, 1) \leq \phi_{0}(x)$, which we denote 
\begin{equation}\label{associated weak geodesic ray}
\Phi(x, \tau) 
:= {\sup}^*  \Psi(x, \tau).  
\end{equation} 
Compare with the construction of the weak geodesic segment (\ref{PB envelope}). 
One can see that it is equivalent to the rays previously constructed in \cite{PS07}, \cite{CT08}, and \cite{RWN11}. 
Therefore the interest is the asymptotic behavior of $E(\phi^t)$ and $D(\phi^t)$ for the associated rays. 

Gluing $(\cX, \cL)$ with the trivial family we have the unique $\G_m$-equivariant family $(\bar{\cX}, \bar{\cL})$ defined over $\P^1$, so that the action as well is trivial in neighborhood of $\infty \in \P^1$. 
As it was compactified one can take the self-intersection number $\bar{\cL}^{n+1}$ which in fact gives the non-Archimedean counterpart of the Monge-Amp\`ere energy: 
\begin{equation}
E^\NA(\cX, \cL) := \frac{\bar{\cL}^{n+1}}{(n+1)V}. 
\end{equation}
Non-Archimedean D-energy is described as the log-canonical threshold 
\begin{align}
D^\NA(\cX, \cL) 
=L^\NA(\cX, \cL)-E^\NA(\cX, \cL) 
:= \lct_{(\bar{\cX}, \cB)}(\cX_0) -1 -\frac{\bar{\cL}^{n+1}}{(n+1)V}. 
\end{align}
Here the boundary divisor $\cB$ is uniquely determined by the property $\cB \sim_\Q -K_{\bar{\cX}/\P^1} -\bar{\cL}$ and $\supp \cB \subset \cX_0$. 
For the substantial non-Archimedean treatment, we refer \cite{BHJ15}, \cite{BFJ16}, \cite{BJ18}, and the survey article \cite{Bou18}.  
For our purpose it is sufficient to recall that it gives the slope of the Monge-Amp\`ere energy. 
For the D-energy the invariant $D^\NA(\cX, \cL)$ and the slope formula was built by R. Berman. See also the milestone works \cite{DT92a}, \cite{Tian97}. 
\begin{thm}[\cite{Berm16}, Theorem 3.11]
Let $(\cX, \cL)$ be a test configuration of a Fano manifold and take a bounded fiber metric of $\cL$, which is defined and has semipositive curvature over the unit disk.  
Then for the associated ray $\phi^t \in \cH$ one has 
\begin{equation*}
E^\NA(\cX, \cL) = \lim_{t \to \infty} \frac{E(\phi^t)}{t}, ~~~
D^\NA(\cX, \cL) = \lim_{t \to \infty} \frac{D(\phi^t)}{t}.  
\end{equation*}
\end{thm}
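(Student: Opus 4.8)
The plan is to prove the two slope identities separately, since $D = L - E$ and $D^\NA = L^\NA - E^\NA$ reduce the statement to showing $E^\NA(\cX, \cL) = \lim_{t\to\infty} E(\phi^t)/t$ and $L^\NA(\cX, \cL) = \lim_{t\to\infty} L(\phi^t)/t$. A preliminary reduction makes both slopes robust: as already observed, two bounded semipositive fiber metrics on $\cL$ differ by a bounded function on the total space, so $E(\phi^t)$ and $L(\phi^t)$ change only by $O(1)$ and neither slope depends on the chosen metric. I may therefore replace $\Phi$ by whichever representative is most convenient---after pulling back to a resolution on which $\cL$ becomes relatively semiample I may take $\bar{\cX}$ smooth and $\Phi$ a smooth $S^1$-invariant metric with $p_1^*\omega_0 + dd^c_{x,\tau}\Phi \geq 0$---and recover the general case by this stability of the slope.

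For the energy slope I would use that $E(\phi^t)$ is convex in $t$, which holds here because the total-space form $p_1^*\omega_0 + dd^c_{x,\tau}\Phi$ is semipositive, so that $E(\phi^t)/t$ converges monotonically to a well-defined slope; it then suffices to identify that slope with the intersection number $\bar{\cL}^{n+1}/((n+1)V)$. The mechanism is Chern--Weil together with Fubini over the base: for the smooth representative one has $\bar{\cL}^{n+1} = \int_{\bar{\cX}}(dd^c\Phi)^{n+1}$, and over the trivial part $X\times(\P^1\setminus\{0\})$ the substitution $t = -\log\abs{\tau}$ together with the computation of \cite{Sem92} expresses the $(n+1)$-fold form $(dd^c_{x,\tau}\Phi)^{n+1}$ through $\frac{d^2}{dt^2}E(\phi^t)$ and $\omega_{\phi^t}^n$. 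Integrating in $t$ telescopes to boundary data at $t=0$ and at $\infty$ (where the family is trivial), and the net contribution is precisely $(n+1)V$ times the asymptotic slope of $E(\phi^t)$; keeping track of the binomial factor and the volume normalization is the routine bookkeeping I would not spell out.

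For the $L$-slope I would first discard the bounded factor $e^{\rho_0}$, so that the problem becomes the asymptotics of $-\frac{1}{t}\log\int_X e^{-\phi^t}\omega_0^n$. Here $\Phi$ is a (singular) metric on $\cL$ over $\bar{\cX}$ whose behaviour along the central fibre governs the integrability of $e^{-\phi^t}$. Passing to a log resolution on which $\cX_0 = \sum b_i E_i$ and recording the log discrepancies of the pair $(\bar{\cX}, \cB)$ for the boundary $\cB \sim_\Q -K_{\bar{\cX}/\P^1} - \bar{\cL}$, a Laplace-type (Demailly--Koll\'ar) estimate localizes the integral near the divisor minimizing the discrepancy-to-multiplicity ratio and shows that the exponential rate equals that minimum, which is exactly $\lct_{(\bar{\cX}, \cB)}(\cX_0) - 1 = L^\NA(\cX, \cL)$.

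The hard part is this last step: converting the Archimedean asymptotics of the exponential integral into the algebraic log-canonical threshold. It needs resolution of singularities, the local integrability computations for the complex singularity exponent, and the verification that $\cB$ enters with exactly the weight that makes the analytic decay rate coincide with the lct. This is where genuine non-Archimedean input, rather than the soft convexity used for the energy slope, is unavoidable.
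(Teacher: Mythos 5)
The paper does not actually prove this statement: it is quoted verbatim from \cite{Berm16}, Theorem 3.11, so there is no internal proof to compare against. Judged on its own, your sketch reconstructs essentially the argument of the cited literature (\cite{Berm16}; see also \cite{BHJ16}): the $O(1)$-invariance of both slopes under change of bounded metric (which the paper itself records in the remark that any two compatible rays satisfy $\abs{\Phi-\Psi}\leq C$), the identification of the $E$-slope with $\bar{\cL}^{n+1}/((n+1)V)$ by a Monge--Amp\`ere mass/Fubini computation in the spirit of \cite{Sem92}, and the identification of the $L$-slope with $\lct_{(\bar{\cX},\cB)}(\cX_0)-1$ via log resolution and Demailly--Koll\'ar--type asymptotics of $\int_X e^{-\phi^t}\omega_0^n$, with $\cB\sim_\Q -K_{\bar{\cX}/\P^1}-\bar{\cL}$ entering through the Jacobian of the resolution. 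This is the correct decomposition and the correct mechanism for each half, and you rightly locate the genuine content in the $L$-part.

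Two points would need to be nailed down to make the sketch a proof. First, in the $E$-slope telescoping you should justify the boundary bookkeeping: your smooth $S^1$-invariant representative (pulled back from the semiample model, glued with the trivial metric near $\infty\in\P^1$) puts no Monge--Amp\`ere mass on the central fiber (a smooth top form does not charge a divisor), and convexity gives $\lim_{t\to\infty}\frac{d}{dt}E(\phi^t)$ equal to the slope, so the total mass identity $\int_{\bar{\cX}}(p_1^*\omega_0+dd^c\Phi)^{n+1}=\bar{\cL}^{n+1}$ does close the computation, but the gluing at $t=0$ produces only a Lipschitz metric and the resulting boundary terms must be checked to cancel. Second, the $L$-part as written is a plan rather than an argument: the passage from the local model $\Phi\sim\sum a_i\log\abs{z_i}$ on the snc resolution to the exact exponential rate, and the verification that the relative canonical and the multiplicities $b_i$ of $\cX_0=\sum b_i E_i$ combine to exactly $\lct_{(\bar{\cX},\cB)}(\cX_0)-1$, is the entire substance of \cite{Berm16}, Theorem 3.11, and you acknowledge rather than supply it. As a blind reconstruction of a cited result the outline is sound; as a self-contained proof it is incomplete precisely at the step you flag.
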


It shows that the non-Archimedean D-energy for the Ricci curvature formulation is just in parallel with the Donaldson-Futaki invariant defined by \cite{Don02} (equivalently, non-Archimedean K-energy defined by \cite{BHJ15}) for the scalar curvature formulation. 
In terms of the positivity of $D^\NA(\cX, \cL)$, one may define {\em D-stability} of $(X, -K_X)$. 
A Fano manifold $X$ is called D-semistable if $D^\NA(\cX, \cL) \geq 0$ for any test configuration. 
It is called D-polystable if moreover the equality holds precisely when $(\cX, \cL)$ is a product family (with a possibly non-trivial $\G_m$-action). 
As a result D-stability is equivalent to the K-stability. 
Indeed, from the work of \cite{LX14}, it is enough to consider so-called {\em special} test configuration in detecting the K-stability of a Fano manifold, and for these special test configurations $D^\NA(\cX, \cL)$ is equal to Donaldson-Futaki invariant. 
Therefore we observe: 
\begin{thm}[A consequence of \cite{CDS15} and \cite{LX14}]  
A Fano manifold admits a K\"ahler-Einstein metric iff it is D-polystable. 
\end{thm}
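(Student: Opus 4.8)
The plan is to read this statement as the Yau--Tian--Donaldson correspondence for $(X,-K_X)$ and to reduce it to the two deep inputs named in the attribution. Existence of a K\"ahler--Einstein metric is equivalent to K-polystability by \cite{CDS15}, so the entire content is the equivalence of D-polystability with K-polystability. I would establish this equivalence by playing the algebraic reduction of \cite{LX14} against the analytic convexity recorded in the previous subsection, proving the two implications by genuinely different means.

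For the implication \emph{D-polystable $\Rightarrow$ K-polystable} I would argue purely algebraically. By \cite{LX14} it suffices to test the Donaldson--Futaki invariant on \emph{special} test configurations, and for a special test configuration one has the identity $\DF(\cX,\cL)=D^\NA(\cX,\cL)$. Hence D-polystability, which asserts $D^\NA(\cX,\cL)\ge 0$ for every test configuration with equality only for products, forces $\DF\ge 0$ on every special test configuration with the same equality case, and by the reduction this is exactly K-polystability. Composing with \cite{CDS15}, this already yields the nontrivial implication \emph{D-polystable $\Rightarrow$ K\"ahler--Einstein}.

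For the converse \emph{K\"ahler--Einstein $\Rightarrow$ D-polystable} I would not pass through \cite{LX14} at all but instead use the convexity machinery above. If $\omega$ is K\"ahler--Einstein then its potential is a critical point, hence (by geodesic convexity) a minimizer, of the Ding energy $D$, so $D$ is bounded below. Given any test configuration, choose a compatible bounded ray $\phi^t$; by Berndtsson's theorem $D(\phi^t)$ is convex, and a convex function bounded below has nonnegative slope at infinity. The slope formula of \cite{Berm16} identifies this slope with $D^\NA(\cX,\cL)$, so $D^\NA(\cX,\cL)\ge 0$ and $X$ is D-semistable.

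The delicate point, and the step I expect to be the main obstacle, is the equality case required for \emph{poly}stability: I must rule out $D^\NA(\cX,\cL)=0$ for a non-product configuration. Here boundedness below together with vanishing slope forces $D(\phi^t)$ to be eventually affine of slope zero, and I would invoke the rigidity in the equality case of Berndtsson's convexity theorem to conclude that the geodesic ray is generated by a holomorphic vector field, i.e. that $(\cX,\cL)$ is a product. The care required is precisely that D-polystability quantifies over \emph{all} test configurations while the clean identity $\DF=D^\NA$ holds only on special ones; the reduction of \cite{LX14} is what lets me avoid ever comparing the two invariants on a general test configuration, and matching its product characterization against the analytic rigidity is the subtle bookkeeping that closes the argument.
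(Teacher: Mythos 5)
Your proposal is correct, and for one of the two implications it takes a genuinely different route from the paper. The paper treats the theorem as a two-line observation: by \cite{LX14} it suffices to test stability on \emph{special} test configurations, on which the Donaldson--Futaki invariant equals $D^\NA(\cX,\cL)$, so D-polystability and K-polystability coincide, and \cite{CDS15} then gives both directions of the equivalence at once. You use this algebraic mechanism only for the implication D-polystable $\Rightarrow$ K\"ahler--Einstein, and instead prove K\"ahler--Einstein $\Rightarrow$ D-polystable analytically: the K\"ahler--Einstein potential is a critical point, hence by geodesic convexity a minimizer of $D$; Berndtsson convexity \cite{Bern11} together with the slope formula of \cite{Berm16} then forces $D^\NA(\cX,\cL)\geq 0$ along any compatible ray, and the rigidity in the equality case of Berndtsson's theorem produces a holomorphic vector field ruling out non-product destabilizers. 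This is essentially Berman's original argument in \cite{Berm16}, so it is sound, and the ``delicate point'' you flag is genuine: the $C^{1,1}$ regularity of the ray and the passage from the vector field to a product structure on $(\cX,\cL)$ require exactly the care carried out there. Comparing the two: the paper's route is shorter but implicitly relies on the transfer of D-polystability from special to arbitrary test configurations (the Ding-side analogue of the MMP reduction, which the paper glosses over in the sentence preceding the theorem), whereas your hybrid sidesteps that transfer entirely in the direction where it matters, at the cost of importing the equality analysis of Berndtsson's convexity theorem; your route also stays closer to the analytic framework the paper actually sets up in Section 2, since the convexity, the slope formula, and the compatible rays are all already on the table.
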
 
We refer \cite{BBJ15} for the variational approach to this problem.  

In terms of the $\G_m$-action, $E^\NA(\cX, \cL)$ can be described as follows. 
Let us fix $k \in \N$ and write the weights $\lambda_1, \dots \lambda_{N_k}$ for the induced action on $H^0(\cX_0, k\cL_0)$. It is not so hard to see that 
\begin{equation}\label{weight description of E^NA}
E^\NA(\cX, \cL) = \lim_{k \to \infty} \frac{\sum_{j=0}^{N_k} \lambda_j}{kN_k}. 
\end{equation}
In particular we observe that replacing $\cL$ with line bundle $\cL +c\cX_0$ one has $E^\NA(\cX, \cL+c\cX_0)=E^\NA(\cX, \cL)+c$.  
Letting $\hat{\lambda}:= {N_k}^{-1}\sum_{j=0}^{N_k} \lambda_j$ we may further define the $L^p$-norm 
\begin{equation*}
\norm{(\cX, \cL)}_p:= \lim_{k \to \infty} \bigg[ \frac{\sum_{j=0}^{N_k} \abs{\lambda_j-\hat{\lambda}}^p}{k^{p}N_k} \bigg]^{\frac{1}{p}}, 
\end{equation*}
which is preserved by the above rescaling $\cL \mapsto \cL +c\cX_0$. 
The main result of \cite{His16} shows that these norms are equivalent to the $L^p$-norm of the associated weak geodesic ray. 
Notice that for the ray associated to the test configuration the best possible $C^{1, 1}$-regularity was established by \cite{CTW18}. See also \cite{PS10}. 
It follows that the time-derivative $\dot{\phi}^t$ is well-defined. 
\begin{thm}[\cite{His16}, Theorem 1.2] 
For the weak geodesic ray associated with a test configuration, we have 
\begin{equation*}
\norm{(\cX, \cL)}_p= \bigg[ \frac{1}{V}\int_X \abs{\dot{\phi}^t -E^\NA(\cX, \cL)}^p \omega_{\phi^t}^n  \bigg] ^\frac{1}{p}. 
\end{equation*}
In particular the right-hand side is independent of $t \in [0, \infty)$. 
\end{thm}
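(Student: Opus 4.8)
The plan is to exhibit both sides as the $p$-th central moment of a single probability measure on $\R$, the Duistermaat--Heckman measure of $(\cX, \cL)$. On the algebraic side I introduce the empirical spectral measure
\begin{equation*}
\nu_k := \frac{1}{N_k}\sum_{j=1}^{N_k} \delta_{\lambda_j/k},
\end{equation*}
and on the analytic side the push-forward
\begin{equation*}
\mu_t := (\dot{\phi}^t)_*\, \frac{\omega_{\phi^t}^n}{V}
\end{equation*}
of the Monge--Amp\`ere measure under the velocity of the ray, which is well defined thanks to the $C^{1,1}$-regularity of \cite{CTW18}. The theorem then reduces to two claims: that $\nu_k$ converges weakly to a compactly supported measure $\DH$ with $\int_\R\abs{x-E^\NA(\cX,\cL)}^p\,d\DH(x)=\norm{(\cX,\cL)}_p^p$, and that $\mu_t=\DH$ for every $t$.

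For the first claim I would use that $\cL$ is relatively semiample on a family of finite type, so the weight-graded section ring attached to the filtration of $H^0(X,-kK_X)$ induced by the weights $\lambda_j$ is finitely generated. Finite generation confines the supports of the $\nu_k$ to a fixed compact interval and, by the Boucksom--Chen theory of filtered linear series, forces all moments to converge; hence $\nu_k\to\DH$ weakly for a compactly supported probability measure $\DH$. The barycenter of $\nu_k$ is $\hat\lambda/k$, which tends to $E^\NA(\cX,\cL)$ by (\ref{weight description of E^NA}), so letting $k\to\infty$ in the definition of $\norm{(\cX,\cL)}_p$ identifies its $p$-th power with $\int_\R\abs{x-E^\NA(\cX,\cL)}^p\,d\DH(x)$.

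For the second claim I would first verify that $\mu_t$ does not depend on $t$. The ray solves the homogeneous equation $(p_1^*\omega_0+dd^c_{x,\tau}\Phi)^{n+1}=0$ on $X\times(\B\setminus\{0\})$, and a Stokes-type computation shows that $\frac{d}{dt}\int_X\chi(\dot{\phi}^t)\,\omega_{\phi^t}^n=0$ for every test function $\chi$, so all moments of $\mu_t$ are constant; the first moment is pinned to $E^\NA(\cX,\cL)$ by the affineness of $E(\phi^t)$ along the geodesic together with the slope formula of \cite{Berm16}. It then remains to identify this $t$-independent measure with $\DH$. Here I would pass through the Okounkov body: following Boucksom--Chen and Witt Nystr\"om, $\DH$ is the push-forward of Lebesgue measure on the Okounkov body of $-K_X$ under the concave transform of the weight filtration, while a partial-Bergman-kernel, or Bernstein--Markov, analysis expresses the large-$k$ limit of the spectral data of the norms induced by $\phi^t$ through the same concave transform; matching the two rests on the equidistribution of the scaled Bergman density toward the push-forward of the equilibrium Monge--Amp\`ere measure.

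The main obstacle is precisely this last identification: neither the weak convergence $\nu_k\to\DH$ nor the $t$-independence of $\mu_t$ by itself forces the two measures to agree, and bridging the purely algebraic weight data with the analytic velocity distribution requires the quantitative spectral machinery above (equivalently, the compatibility of the Boucksom--Chen concave transform with the geodesic ray). Once $\mu_t=\DH$ is established, comparing the $p$-th central moments on the two sides, and specializing for instance to $p=2$ as needed for (\ref{lower bound of the Ricci-Calabi functional}), yields the asserted formula and its manifest independence of $t$.
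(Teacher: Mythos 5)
The paper does not prove this statement itself --- it quotes it directly from \cite{His16} --- and your outline reconstructs essentially the strategy of that original proof: weak convergence of the spectral measures $\nu_k$ via the Boucksom--Chen theory of filtered linear series with barycenter pinned to $E^\NA(\cX,\cL)$, $t$-invariance of the push-forward $(\dot{\phi}^t)_*\bigl(V^{-1}\omega_{\phi^t}^n\bigr)$ along the $C^{1,1}$-geodesic, and identification of the two limit measures through the concave transform on the Okounkov body together with partial Bergman kernel equidistribution. So your proposal is correct and takes the same route as the cited proof, and you rightly flag the identification $\mu_t=\DH(\cX,\cL)$ as the step carrying the real mathematical weight.
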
 

Once the above results are accepted, the proof of the inequality (\ref{lower bound of the Ricci-Calabi functional}) is immediate. 
\begin{cor}[\cite{His16}, Theorem 1.3] 
For any K\"ahler metric $\omega$ in $c_1(X)$ and test configuration $(\cX, \cL)$, we have 
\begin{equation*}
\bigg[ \frac{1}{V}\int_X (e^\rho-1)^2 \omega^n \bigg]^{\frac{1}{2}} \geq \frac{-D^\NA(\cX, \cL)}{\norm{(\cX, \cL)}_2}. 
\end{equation*}
\end{cor}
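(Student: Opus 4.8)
The plan is to run the familiar convexity (moment--weight) argument along the weak geodesic ray attached to the given test configuration, taking the reference metric $\omega$ itself as the initial point of the ray. Writing $\omega=\omega_\phi$ with $\phi\in\cH$, I would build the associated weak geodesic ray $\phi^t$ of the form \eqref{associated weak geodesic ray} normalized so that $\phi^0=\phi$. Three ingredients recorded above then combine: Berndtsson's theorem, which makes $t\mapsto D(\phi^t)$ convex; Berman's slope formula $D^\NA(\cX,\cL)=\lim_{t\to\infty}D(\phi^t)/t$; and the first-variation identity $d_\phi D(u)=\frac1V\int_X u\,(e^\rho-1)\,\omega^n$, which is simply the measure form $\mu_\phi-V^{-1}\omega_\phi^n$ tested against $u$.

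I would proceed in three steps. First, convexity of $f(t):=D(\phi^t)$ gives $f(t)\ge f(0)+t\,f'(0^+)$ for every $t>0$; dividing by $t$ and letting $t\to\infty$ yields
\[
D^\NA(\cX,\cL)=\lim_{t\to\infty}\frac{f(t)}{t}\ \ge\ f'(0^+).
\]
Second, since $\phi^0=\phi$ is smooth and the ray is $C^{1,1}$ (so $\dot\phi^{0}$ is well defined), the right-hand derivative is computed by the first-variation identity,
\[
f'(0^+)=d_\phi D(\dot\phi^{0})=\frac1V\int_X \dot\phi^{0}\,(e^\rho-1)\,\omega^n.
\]
Third, the normalization $\int_X(e^\rho-1)\,\omega^n=0$ from \eqref{Ricci potential} lets me subtract the constant $E^\NA(\cX,\cL)$ from $\dot\phi^{0}$ without changing the integral, after which Cauchy--Schwarz gives
\[
f'(0^+)=\frac1V\int_X(\dot\phi^{0}-E^\NA)(e^\rho-1)\,\omega^n
\ \ge\ -\Big[\tfrac1V\!\int_X(\dot\phi^{0}-E^\NA)^2\omega^n\Big]^{\frac12}\Big[\tfrac1V\!\int_X(e^\rho-1)^2\omega^n\Big]^{\frac12}.
\]
By the norm formula of \cite{His16} the first bracket is exactly $\norm{(\cX,\cL)}_2$, evaluated at $t=0$ where $\omega_{\phi^0}=\omega$; the second is $R(\phi)^{1/2}$. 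Chaining the three displays gives $D^\NA(\cX,\cL)\ge-\norm{(\cX,\cL)}_2\,R(\phi)^{1/2}$, which is the asserted inequality after dividing by $\norm{(\cX,\cL)}_2$ (the trivial case $\norm{(\cX,\cL)}_2=0$ being vacuous).

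The routine parts are the Cauchy--Schwarz step and the rescaling by $E^\NA$, which is harmless since $\cL\mapsto\cL+c\cX_0$ shifts $E^\NA$ by $c$ and leaves $\norm{\cdot}_2$ invariant, matching the freedom in the additive constant of $\phi$. The step I expect to demand the most care is the second one: justifying that the one-sided slope $f'(0^+)$ is genuinely computed by the pointwise first-variation formula. This hinges on the optimal $C^{1,1}$-regularity of the test-configuration ray (\cite{CTW18}), so that $\dot\phi^{0}$ exists, together with the continuity of $D$ in the strong topology (\cite{Dar15}), so that the identity $d_\phi D=\mu_\phi-V^{-1}\omega_\phi^n$ may legitimately be applied at the smooth endpoint. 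Once differentiability at $t=0$ is secured, the remainder is formal.
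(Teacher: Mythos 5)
Your proposal is correct and follows essentially the same route as the paper: convexity of $D$ along the ray gives $D^\NA(\cX,\cL)=\lim_{t\to\infty}D(\phi^t)/t\geq \frac{d}{dt}\big\vert_{t=0}D(\phi^t)$, the first variation is paired with $(e^\rho-1)\omega^n$, and Cauchy--Schwarz together with the norm identity of \cite{His16} concludes. The only cosmetic difference is that the paper normalizes by rescaling $\cL\mapsto\cL+c\cX_0$ so that $E^\NA(\cX,\cL)=0$, whereas you subtract the constant $E^\NA$ inside the integral using $\int_X(e^\rho-1)\,\omega^n=0$; these are trivially equivalent.
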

\begin{proof} 
By constant rescaling we may assume $E^\NA(\cX, \cL)=0$ and the geodesic convexity implies 
\begin{align*}
-D^\NA(\cX, \cL) 
&= \lim_{t\to \infty} \frac{-D(\phi^t)}{t} \\ 
&\leq -\frac{d}{dt}\bigg\vert_{t=0}D(\phi^t)
= -\frac{1}{V}\int_X \dot{\phi}^0 (e^\rho -1) \omega_{\phi^0}^n.  
\end{align*}
Now the statement is a simple consequence of the Cauchy-Schwartz inequality. 
\end{proof}


\section{Construction of a weak geodesic ray and test configurations}

\subsection{Estimates for the inverse Monge-Amp\`ere flow}
In the sequel we assume that $X$ admits no K\"ahler-Einstein metric, otherwise the identity of Theorem A is trivial.  
We denote the solution of the inverse Monge-Amp\`ere flow (\ref{inverse Monge-Ampere flow}) by $\phi_t$ $(t\in [0, \infty))$ and  
fix any sequence $t_j \to \infty$. 
Although the solution of the flow is smooth, to consider geodesics we need the space $\cE^1$ of finite energy metrics.  

First we will take a geodesic segment $\phi_j^t \in \cE^1$ $(0\leq t \leq t_j)$, which joins $\phi_0$ to  
$\phi_{t_j}$. 
Our normalization of the Ricci potential yields that $E(\phi_t)$ is constant in $t$. For the supremum we have
\begin{lem}[Lemma 4.1 of \cite{CHT17}]\label{linear bound of the flow}
The flow is linearly bounded from above: 
\begin{equation*}
\phi_t \leq t +A.  
\end{equation*}
\end{lem}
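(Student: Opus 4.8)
The plan is to prove the bound by direct time-integration of the flow, exploiting only the sign of the driving term. The key observation is that along the flow the Ricci potential $\rho=\rho_{\phi_t}$ is a smooth real-valued function: the solution $\phi_t$ is smooth and exists for all $t\in[0,\infty)$ by the long-time existence theorem of \cite{CHT17} quoted above, and $\omega_{\phi_t}$ is a genuine K\"ahler metric, so $\rho_{\phi_t}$ is the ordinary normalized Ricci potential. Hence $e^\rho>0$ holds pointwise on $X$ for every $t$, and the velocity of the flow is bounded above by a constant independent of $x$ and $t$:
\begin{equation*}
\frac{\partial}{\partial t}\phi_t = 1 - e^\rho \leq 1.
\end{equation*}

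First I would fix a point $x\in X$ and integrate this inequality in time. Since $t\mapsto\phi_t(x)$ is $C^1$ on $[0,\infty)$, the fundamental theorem of calculus gives
\begin{equation*}
\phi_t(x)-\phi_0(x) = \int_0^t \big(1-e^{\rho_{\phi_s}(x)}\big)\,ds \leq t.
\end{equation*}
Taking $A:=\sup_X\phi_0$, which is finite because $X$ is compact and the initial datum $\phi_0$ is smooth, I obtain $\phi_t\leq t+A$ uniformly in $x$, as asserted.

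The only step that genuinely relies on the earlier results, rather than being a triviality, is the justification that $\rho_{\phi_t}$ is well-defined and smooth for all $t$, so that the pointwise positivity $e^\rho>0$ and the time-integration are legitimate; this is exactly what the long-time existence and regularity in the cited theorem of \cite{CHT17} supply. I would emphasise that this is the easy, one-sided estimate: the reverse linear lower bound would instead require an \emph{upper} bound on $e^\rho$, for which no estimate uniform in $t$ is available --- indeed the flow is engineered precisely to detect the directions in which the D-energy decays worst, so $e^\rho$ may degenerate --- but such a bound is not needed for the present statement.
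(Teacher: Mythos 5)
Your proof is correct and is essentially the same argument as the one behind Lemma 4.1 of \cite{CHT17}, which the present paper simply cites: since $e^{\rho_{\phi_t}}>0$ pointwise for the smooth long-time solution, the flow equation gives $\frac{\partial}{\partial t}\phi_t=1-e^{\rho_{\phi_t}}\leq 1$, and integrating in $t$ from the smooth initial datum yields $\phi_t\leq t+\sup_X\phi_0$. Your closing remark is also accurate --- the corresponding linear \emph{lower} bound would require an upper bound on $e^{\rho}$ (a Harnack-type estimate), which is precisely what is unavailable for this flow, as the paper notes in its remark following the construction of the asymptotic geodesic ray.
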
 
It follows that for any fixed $T$ Aubin's J-functional 
\begin{equation*}
J(\phi_t):= \frac{1}{V}\int_X \phi_t \omega_0^n -E(\phi_t)
\end{equation*} 
is bounded in $t \in[0, T]$. 
Notice that $J(\phi_t)$ is not bounded in $t \in [0, \infty)$, 
otherwise the flow converges to a weak minimizer of D-energy in $\cE^1$, namely the K\"ahler-Einstein metric. It achieves the equality in (\ref{lower bound of the Ricci-Calabi functional}). 
In other words, we have $\sup_X \phi_{t_j} \to +\infty$. 
In fact by \cite{Dar17b}, Corollary 4.14, $d_1$ is explicitly described in terms of $E$ as 
\begin{equation*}
d_1(\phi, \psi) = E(\phi) +E(\psi) -2E(P(\phi, \psi)),  
\end{equation*}
where  $P(\phi, \psi) \in \cE^1 $ is the upper envelope of $\omega_0$-functions $u$ such that $u \leq \phi, \psi$. 
The formula implies that 
\begin{equation*}
\sup_X \phi_{t_j}-C \leq d_1(\phi_0, \phi_{t_j}) \leq \sup_X \phi_{t_j}+C. 
\end{equation*}
This is comparable with \cite{DH17}, Theorem $1$ for the K\"ahler-Ricci trajectory. 

\subsection{Construction of a weak geodesic ray asymptotic to the flow}
Following the argument of \cite{DH17} we will show that, taking a subsequence if necessary, a particular choice (\ref{PB envelope}) of geodesics $\phi^t_j$ converges to a ray $\phi^t$ in $(\cE^1, d_1)$. 

The convergence argument focus on the relative entropy 
\begin{equation*}
H(\nu \vert \mu) := \int_X \log (\frac{\nu}{\mu})\nu 
\end{equation*}
of two given probabilistic measures $\nu$ and $\mu$, due to the fundamental compactness result for the subset with bounded entropy. 

\begin{thm}[\cite{BBEGZ11}] 
The subset 
\begin{equation*}
\bigg\{ \phi \in \cE^1: H(V^{-1}\omega_{\phi_t}^n\vert {V^{-1}\omega_0^n}) \leq C, ~~ \sup_X \phi = 0 \bigg\},  
\end{equation*}
 is compact in the $d_1$-topology. 
\end{thm}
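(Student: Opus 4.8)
The plan is to combine the classical $L^1$-compactness of sup-normalized $\omega_0$-plurisubharmonic functions with the lower semicontinuity of the relative entropy, and then to upgrade the resulting weak convergence to the strong $d_1$-topology by exploiting the entropy bound once more. Write $\cK$ for the set in question and let $\phi_j \in \cK$ be an arbitrary sequence. First I would apply the standard compactness of the family of $\omega_0$-psh functions normalized by $\sup_X \phi_j = 0$: this family is relatively compact in $L^1(X, \omega_0^n)$, and the normalization excludes the degenerate limit $-\infty$. Passing to a subsequence, $\phi_j \to \phi_\infty$ in $L^1$ with $\phi_\infty$ again $\omega_0$-psh and, after the usual upper-semicontinuous regularization, $\sup_X \phi_\infty = 0$.

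Second I would identify $\phi_\infty$ as a finite-energy potential still lying in $\cK$. The $L^1$-convergence yields weak convergence of the non-pluripolar Monge-Amp\`ere measures $V^{-1}\omega_{\phi_j}^n$ along a further subsequence, and since $\nu \mapsto H(\nu \mid V^{-1}\omega_0^n)$ is lower semicontinuous for the weak topology of measures, the limit satisfies $H(V^{-1}\omega_{\phi_\infty}^n \mid V^{-1}\omega_0^n) \leq C$ as well. A probability measure of finite relative entropy has density in $L\log L$ and charges no pluripolar set, so by the solvability of the degenerate Monge-Amp\`ere equation in the finite-energy class (\cite{BBGZ13}) it is the Monge-Amp\`ere measure of some $\phi_\infty \in \cE^1$; hence $\phi_\infty \in \cE^1 \cap \cK$.

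The decisive and hardest step is to promote the $L^1$-convergence to $d_1$-convergence. I would use the characterization that $\phi_j \to \phi_\infty$ in $d_1$ if and only if $\phi_j \to \phi_\infty$ in $L^1$ (equivalently in capacity) together with $E(\phi_j) \to E(\phi_\infty)$; indeed the envelope formula $d_1(\phi, \psi) = E(\phi) + E(\psi) - 2E(P(\phi, \psi))$ recalled above immediately gives $d_1(\phi_j, \phi_\infty) \geq \abs{E(\phi_j) - E(\phi_\infty)}$, so energy convergence is necessary. Now the Monge-Amp\`ere energy is always upper semicontinuous along $L^1$-convergence, so $\limsup_j E(\phi_j) \leq E(\phi_\infty)$ holds for free; the content is the matching lower bound, namely the absence of energy loss to pluripolar or small sets. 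This is exactly what the uniform entropy bound prevents: by the de la Vall\'ee-Poussin criterion the bound makes the densities of $V^{-1}\omega_{\phi_j}^n$ uniformly integrable, so no Monge-Amp\`ere mass can concentrate, which forces $\liminf_j E(\phi_j) \geq E(\phi_\infty)$ and hence $E(\phi_j) \to E(\phi_\infty)$. Combining the three steps, every sequence in $\cK$ admits a subsequence $d_1$-convergent to a point of $\cK$, establishing the asserted compactness.

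The main obstacle I anticipate is the quantitative link in the third step, converting the uniform integrability furnished by the entropy into genuine energy convergence (and thereby $d_1$-convergence): this is where the full strength of the finite-energy pluripotential theory of \cite{Dar15}, \cite{Dar17b}, \cite{BBEGZ11} is needed, and it is precisely the phenomenon that the coarser $L^1$-topology cannot detect.
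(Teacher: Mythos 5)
The paper itself gives no proof of this statement --- it is quoted verbatim from \cite{BBEGZ11} --- so your attempt has to be measured against the argument there. Your skeleton ($L^1$-compactness of sup-normalized psh functions, lower semicontinuity of the entropy, upgrading to $d_1$ via convergence of the Monge-Amp\`ere energy and the characterization of $d_1$ quoted in Section 2.2) is the right one, but two of your steps have genuine gaps. The first is in your second step: extracting a weakly convergent subsequence of the probability measures $V^{-1}\omega_{\phi_j}^n$ is harmless, but the Monge-Amp\`ere operator is \emph{not} continuous along $L^1$-convergence of potentials, so the weak limit $\nu$ need not equal $V^{-1}\omega_{\phi_\infty}^n$; and while \cite{BBGZ13} does produce $\psi \in \cE^1$ with $\omega_\psi^n = V\nu$, nothing you write identifies $\psi$ with the $L^1$-limit $\phi_\infty$ --- that identification is essentially equivalent to the strong convergence you are still trying to prove, so as stated the step is circular. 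It can only be carried out a posteriori, once $d_1$-convergence is known, using the continuity of the Monge-Amp\`ere operator along strong convergence; this is also how one verifies that the limit stays in the set.

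The second gap is that the crux, $\liminf_j E(\phi_j) \geq E(\phi_\infty)$, is asserted rather than proved: uniform integrability of the densities does not by itself ``force'' energy convergence, because $E(\phi_j)$ is not a functional of the measures $V^{-1}\omega_{\phi_j}^n$ alone. The working mechanism is the concavity of $E$, which gives
\begin{equation*}
E(\phi_j) \geq E(\phi_\infty) + \frac{1}{V}\int_X (\phi_j - \phi_\infty)\, \omega_{\phi_j}^n,
\end{equation*}
after which the pairing term must be killed by the Orlicz-type duality between $L\log L$ (the densities, controlled by the entropy bound) and the exponential class (sup-normalized $\omega_0$-psh functions are uniformly exponentially integrable by Tian's $\alpha$-invariant), combined with $\phi_j \to \phi_\infty$ in every $L^p$, $p < \infty$; this pairing estimate is a nontrivial lemma in \cite{BBEGZ11}, not a soft consequence of de la Vall\'ee-Poussin. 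Moreover the displayed inequality is vacuous unless one already knows $E(\phi_\infty) > -\infty$, and since $E$ is only \emph{upper} semicontinuous for the $L^1$-topology this cannot be read off from the limit: you must first bound $E(\phi_j)$ uniformly from below. That coercivity estimate is itself a consequence of the entropy bound, via Proposition \ref{Legendre transformation} applied with $f = -\e\phi_j$, the $\alpha$-invariant bound on $\int_X e^{-\e\phi_j}\omega_0^n$, and the inequality $-E(\phi_j) \leq \frac{1}{V}\int_X (-\phi_j)\,\omega_{\phi_j}^n$ valid for $\phi_j \leq 0$; it is missing from your proposal altogether, and without it your third step cannot start.
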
 

The role of the entropy in the K\"ahler-Einstein problem was observed by the thermodynamical formalism of \cite{Berm13}.
Firstly, D-energy is rather related with the entropy for the canonical probability measure $\nu= \mu_\phi$, by the following fact.  
\begin{prop}\label{Legendre transformation}
The relative entropy for the probabilistic measures $\nu$ and $\mu$ can be described as the Legendre dual: 
\begin{align*}
H(\nu \vert \mu) = \sup_{f \in C^0(X; \R)} \bigg[ \int_X f \nu -\log \int_X e^f \mu \bigg]. 
\end{align*}
\end{prop}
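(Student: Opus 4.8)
The plan is to recognize the asserted identity as the Gibbs variational principle (equivalently the Donsker--Varadhan formula) for relative entropy, and to prove the two inequalities separately. Write $g := d\nu/d\mu$ when $\nu$ is absolutely continuous with respect to $\mu$, so that $H(\nu\vert\mu)=\int_X g\log g\,d\mu=\int_X\log g\,d\nu$, and recall that $H(\nu\vert\mu)=+\infty$ when $\nu$ is not absolutely continuous with respect to $\mu$. Since $t\mapsto t\log t$ is bounded below (by $-1/e$), the negative part $\int_X(\log g)^-\,d\nu$ is automatically finite, so $H(\nu\vert\mu)$ is well defined in $(-\infty,+\infty]$.

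For the inequality $H(\nu\vert\mu)\geq\sup_f[\,\cdots\,]$ I would use the tilted probability measure. Given $f\in C^0(X;\R)$, set $\mu_f:=e^f\mu\big/\int_X e^f\,d\mu$; since $f$ is continuous on the compact $X$, $\mu_f$ and $\mu$ are mutually absolutely continuous. A direct computation of $H(\nu\vert\mu_f)=\int_X\log(d\nu/d\mu_f)\,d\nu$ gives
\[
H(\nu\vert\mu)-\Big[\int_X f\,d\nu-\log\int_X e^f\,d\mu\Big]=H(\nu\vert\mu_f),
\]
and because relative entropy is nonnegative (Jensen's inequality for the convex function $t\mapsto t\log t$), the right-hand side is $\geq 0$. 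This yields the inequality, and it holds trivially, both sides being $+\infty$, when $\nu$ is not absolutely continuous with respect to $\mu$.

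The substance is the reverse inequality $\sup_f[\,\cdots\,]\geq H(\nu\vert\mu)$, whose natural candidate is the formal maximizer $f=\log g$, for which the functional equals exactly $\int_X\log g\,d\nu-\log\int_X g\,d\mu=H(\nu\vert\mu)$, using $\int_X g\,d\mu=1$. The difficulty is that $\log g$ is in general neither continuous nor bounded, so I would regularize in two steps. First truncate, setting $f_N:=\max(\min(\log g,N),-N)$; monotone convergence shows $\int_X f_N\,d\nu\to\int_X\log g\,d\nu$, while the estimate $e^N\mu(\{g>e^N\})\leq\int_{\{g>e^N\}}g\,d\mu\to 0$ gives $\int_X e^{f_N}\,d\mu\to 1$, so the functional at $f_N$ tends to $H(\nu\vert\mu)$. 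Then approximate each bounded measurable $f_N$ by continuous $h_k$ with $\vert h_k\vert\leq N$, convergent almost everywhere and in $L^1(\mu+\nu)$ (Lusin's theorem, using regularity of the measures on the compact manifold $X$); dominated convergence, with $e^{h_k}\leq e^N$, gives $\int_X h_k\,d\nu\to\int_X f_N\,d\nu$ and $\int_X e^{h_k}\,d\mu\to\int_X e^{f_N}\,d\mu$, and a diagonal choice produces continuous test functions realizing $H(\nu\vert\mu)$ in the limit.

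It remains to treat the infinite case. If $\nu\ll\mu$ but $\int_X g\log g\,d\mu=+\infty$, the same truncation drives $\int_X f_N\,d\nu\to+\infty$ while $\log\int_X e^{f_N}\,d\mu$ stays bounded, so the supremum is $+\infty$. If $\nu$ is not absolutely continuous with respect to $\mu$, choose a Borel set $A$ with $\mu(A)=0$ and $\nu(A)=:\delta>0$, and test against continuous approximations of $c\,\mathbf 1_A$, available by inner/outer regularity, arranged to keep $\int_X e^f\,d\mu$ near $1$ while $\int_X f\,d\nu\approx c\delta$; letting $c\to\infty$ forces the supremum to $+\infty=H(\nu\vert\mu)$. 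I expect this last, non-absolutely-continuous, step to be the main obstacle, since it is exactly where measurable indicators must be traded for continuous functions without letting the $\log\int_X e^f\,d\mu$ term blow up; by comparison the truncation-plus-Lusin scheme in the finite case is routine. Combining the two inequalities gives the claimed equality.
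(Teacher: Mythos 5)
Your proof is correct, and it is worth noting that the paper does not actually prove this proposition: it only remarks that the inequality $H(\nu\vert\mu)\geq \int_X f\,\nu-\log\int_X e^f\mu$ is ``obvious from Jensen's inequality'' and treats the full Legendre duality (the Gibbs/Donsker--Varadhan variational principle) as a standard fact, deferring discussion to Section 5. Your tilting identity $H(\nu\vert\mu)-\bigl[\int_X f\,\nu-\log\int_X e^f\mu\bigr]=H(\nu\vert\mu_f)\geq 0$ is a structured form of that Jensen argument, and it has the added merit of working verbatim for any bounded measurable $f$, hence by monotone truncation for lower-semicontinuous $f$ bounded below --- exactly the extension the paper asserts without proof and later uses with $f=-\dot{\phi}^0$ in the H-functional section. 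The reverse inequality, which the paper omits entirely, you supply in full and correctly: after truncating $\log g$, the bound $e^N\mu(\{g>e^N\})\leq\int_{\{g>e^N\}}g\,d\mu\to0$ does give $\int_X e^{f_N}\,d\mu\to1$ (the third piece being $\leq e^{-N}$), the Lusin step is sound since $\lvert h_k\rvert\leq N$ makes $e^{h_k}\leq e^N$ a valid dominating function and $\int_X e^{f_N}\,d\mu\geq e^{-N}>0$ keeps the logarithm continuous at the limit, and the non-absolutely-continuous case is easier than you fear: inner regularity of $\nu$ gives a compact $K\subset A$ with $\nu(K)\geq\delta/2$, outer regularity of $\mu$ gives an open $U\supset K$ with $\mu(U)\leq e^{-c}$, and an Urysohn function $\chi$ with $\mathbf{1}_K\leq\chi\leq\mathbf{1}_U$ yields, for $f=c\chi$, $\int_X f\,d\nu\geq c\delta/2$ while $\int_X e^f\,d\mu\leq 1+e^c\mu(U)\leq 2$, so letting $c\to\infty$ forces the supremum to $+\infty$. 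In short, your argument is a complete, self-contained proof of a statement the paper records as known, and it subsumes the only portion (the Jensen direction, valid beyond continuous $f$) that the paper actually relies on.
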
 
The one-side inequality is obvious from Jensen's inequality and it is actually true for arbitrary lower-semicontinuous function $f$. 
This point will be more discussed in section $5$. 

The entropy of the Monge-Amp\`ere measure which we want to control forms the main term of Mabuchi's K-energy functional. 
Indeed using the integration by parts formula of \cite{Chen00b}, we may define the K-energy by 
\begin{equation}
M(\phi) 
= H(V^{-1}\omega_\phi^n\vert V^{-1}\omega_0^n) +\frac{1}{V}\int_X \phi \omega_\phi^n - E(\phi). 
\end{equation}
Note that the second term is controlled by $E$, since for general non-positive $\omega_0$-psh $\phi$ we have 
\begin{equation*}
(n+1)E(\phi) \leq \frac{1}{V}\int_X \phi \omega_\phi^n \leq E(\phi). 
\end{equation*}
It is immediate that the D-energy is monotone along the inverse Monge-Amp\`ere flow. 
In addition, Monge-Amp\`ere energy is conserved, from the normalization of $\rho$. 
Fortunately we have the monotonicity of the K-energy as well. 
\begin{lem}[\cite{CHT17}, Lemma 4.6]
Along the the inverse Monge-Amp\`ere flow, it holds 
\begin{equation*}
\frac{d}{dt} M(\phi) = -\frac{1}{V}\int_X \abs{\nabla \rho}^2 e^{\rho}\omega_\phi^n. 
\end{equation*} 
\end{lem}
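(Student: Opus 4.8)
The plan is to reduce the identity to the classical first variation of Mabuchi's functional and then perform a single integration by parts against the flow equation. A pleasant feature of this setting is that the Theorem of \cite{CHT17} quoted above guarantees that the solution $\phi_t$ remains smooth for all $t\in[0,\infty)$, so every differentiation and integration by parts below is carried out on genuine smooth K\"ahler metrics and no finite-energy approximation is required. First I would recall the first variation formula for the K-energy along a smooth path:
\[
\frac{d}{dt}M(\phi_t) = -\frac{1}{V}\int_X \dot\phi_t\,(S_\omega-\hat{S})\,\omega^n,
\]
where $S_\omega$ is the scalar curvature and $\hat{S}$ its mean. For the entropy form of $M$ used here, this is exactly the content of Chen's integration by parts \cite{Chen00b}; equivalently one differentiates the entropy term $H(V^{-1}\omega_\phi^n\vert V^{-1}\omega_0^n)$ and the two linear terms separately and collects them.

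The second step is to translate $S_\omega-\hat S$ into the Ricci potential $\rho$. Taking the trace $\Tr_\omega$ of the defining relation $\Ric\omega=\omega+dd^c\rho$ gives $S_\omega=n+\Delta_\omega\rho$, while the normalization $[\omega]=c_1(X)$ forces $\hat S=n$; hence $S_\omega-\hat S=\Delta_\omega\rho$. Substituting the flow equation $\dot\phi_t=1-e^\rho$ then yields
\[
\frac{d}{dt}M(\phi_t) = -\frac{1}{V}\int_X (1-e^\rho)\,\Delta_\omega\rho\,\omega^n.
\]

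Finally I would integrate by parts. The constant contribution drops out since $\int_X\Delta_\omega\rho\,\omega^n=0$, leaving only the $e^\rho$ term, and using $\nabla e^\rho=e^\rho\nabla\rho$,
\[
\int_X e^\rho\,\Delta_\omega\rho\,\omega^n = -\int_X \langle\nabla e^\rho,\nabla\rho\rangle_\omega\,\omega^n = -\int_X e^\rho\abs{\nabla\rho}^2\,\omega^n,
\]
which assembles into $\frac{d}{dt}M(\phi_t)=-\frac{1}{V}\int_X\abs{\nabla\rho}^2 e^\rho\,\omega^n$, as asserted. The computation is genuinely a one-line integration by parts; the only points that demand care, and where I expect the main friction, are fixing the sign and normalization conventions for $\Delta_\omega$ (so that $\int u\,\Delta_\omega v\,\omega^n=-\int\langle\nabla u,\nabla v\rangle_\omega\,\omega^n$) and rigorously justifying the first-variation formula for the \emph{entropy} definition of $M$ rather than the classical geometric one. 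Once these are settled, the manifest non-positivity of the right-hand side simultaneously re-establishes the monotonicity of the K-energy along the inverse Monge-Amp\`ere flow.
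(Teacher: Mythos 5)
Your proof is correct and coincides with the argument behind the statement: the paper does not reprove this lemma (it quotes \cite{CHT17}, Lemma 4.6), and the proof there is exactly your chain --- the first variation $\frac{d}{dt}M=-\frac1V\int_X\dot\phi\,(S_\omega-\hat S)\,\omega^n$, the trace identity $S_\omega-\hat S=\Delta_\omega\rho$ (with $\hat S=n$ since $[\omega]=c_1(X)$), substitution of the flow equation $\dot\phi=1-e^\rho$, and a single integration by parts, all legitimate because the flow stays smooth for all time. The two caveats you flag yourself are the right ones (consistent Laplacian/gradient conventions, and checking that the entropy form of $M$ used here has the stated first variation, which amounts to Chen's formula with the $\rho_0$-term absorbed into the reference measure), so nothing essential is missing.
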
 
We deduce from the fact that for any fixed $T$ the entropy $H(V^{-1}\omega_{\phi_t}^n\vert V^{-1}\omega_0^n)$ is bounded in $t \in [0, T]$. 

From the compactness and (\ref{constant speed}) Ascoli's theorem implies that 
(after passing to a subsequence) $\phi^t_j$ converges to a ray $\phi^t$ in $(\cE^1, d_1)$. Moreover, for any fixed $T$, the convergent is uniform in $t \in [0, T]$. 
Since $t_j \to \infty $, $\phi^t$ is defined for $t \in [0, \infty) $. 
In fact by \cite{BBJ15}, Theorem 1.7, $\phi^t$ restricted to any interval $[a, b]$ is of envelope form (\ref{PB envelope}). 
Consequently the limit ray inherits the constant speed property: 
\begin{equation}\label{constant speed d1}
d_1(\phi^t, \phi^s) = d_1(\phi^0, \phi^1)\abs{t-s}. 
\end{equation} 
From the normalization we obtain 
\begin{equation}\label{sup=0}
\sup_X \phi^t \leq t+A 
\end{equation}
for any $t \in [0, \infty)$. 
On the other hand 
\begin{equation}\label{E=C}
\lim_{t\to \infty} \frac{E(\phi^t)}{t} = 0. 
\end{equation} 

For $p>1$, it is not clear from the construction whether $\phi^t$ is asymptotic to the flow, in the sense of \cite{DH17}.  
In general a ray $\phi^t$ is asymptotic to the curve $\phi_t$, if there exists $t_j \to \infty$ and constant speed geodesic segments $\phi_j^t$ $(t \in [0, t_j])$  connecting $\phi_0$ and $\phi_{t_j}$ such that for all $t$
\begin{equation*}
\lim_{j \to \infty} d_p(\phi_j^t, \phi^t) = 0. 
\end{equation*}
For the K\"ahler-Ricci flow \cite{DH17} derive the property from the Harnack estimate which is not estabilished for the inverse Monge-Amp\`ere flow. 
At present setting we will settle for the restrictive estimate: 

\begin{prop}[\cite{Xia19}, Lemma 5.1]\label{Xia}
For each $t$ we have $\phi^t \in \cE^2$ and 
\begin{equation*}
d_2(\phi^0, \phi^t) \leq \liminf_{j \to \infty} d_2(\phi^0, \phi_j^t). 
\end{equation*}
\end{prop}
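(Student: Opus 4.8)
The plan is to exploit the geodesic structure of the segments $\phi_j^\cdot$ together with weak $L^2$-compactness of their initial velocities, all measured against the single fixed measure $\omega_{\phi_0}^n$ of the common base point $\phi_j^0=\phi^0=\phi_0$. Since the boundary data $\phi_0,\phi_{t_j}$ are smooth, each segment $\phi_j^\cdot$ is a bounded (in fact $C^{1,1}$, by \cite{CTW17}) constant-speed $L^2$-geodesic issuing from that common point, and in particular lies in $\cE^2$. By the constant-speed property and conservation of the $L^2$-norm of the velocity along a geodesic,
\begin{equation*}
d_2(\phi^0,\phi_j^t)=t\,s_j,\qquad s_j:=\bigg[\frac{1}{V}\int_X(\dot\phi_j^0)^2\,\omega_{\phi_0}^n\bigg]^{1/2}.
\end{equation*}
Passing to a subsequence I assume $d_2(\phi^0,\phi_j^t)\to\liminf_j d_2(\phi^0,\phi_j^t)=:L$; the inequality is vacuous when $L=+\infty$, so I take $L<\infty$, which bounds the speeds $s_j$.

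The point of this reduction is that every velocity $\dot\phi_j^0$ lives in the single fixed Hilbert space $L^2(X,V^{-1}\omega_{\phi_0}^n)$, with uniformly bounded norm $s_j$. Extracting a further subsequence, $\dot\phi_j^0\rightharpoonup g$ weakly there, and weak lower semicontinuity of the Hilbert norm yields
\begin{equation*}
\bigg[\frac{1}{V}\int_X g^2\,\omega_{\phi_0}^n\bigg]^{1/2}\le\liminf_j s_j=\frac{L}{t}.
\end{equation*}
It then remains to transfer this bound to the limit ray, namely to prove $\phi^t\in\cE^2$ and $d_2(\phi^0,\phi^t)\le t\,[\tfrac1V\int_X g^2\omega_{\phi_0}^n]^{1/2}$.

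This transfer is the main obstacle. By \cite{BBJ15}, Theorem 1.7, the restriction of the limit ray to $[0,t]$ is again a geodesic of envelope form, and I would like to identify $g$ with its initial velocity, so that $d_2(\phi^0,\phi^t)=t\,[\tfrac1V\int_X(\dot\phi^0)^2\omega_{\phi_0}^n]^{1/2}\le t\,[\tfrac1V\int_X g^2\omega_{\phi_0}^n]^{1/2}$. One concrete handle is convexity of $s\mapsto\phi_j^s$ along the geodesic, which gives $\dot\phi_j^0\le(\phi_j^t-\phi_0)/t$; since the right-hand side converges in $L^1$ to $(\phi^t-\phi_0)/t$ while $\dot\phi_j^0\rightharpoonup g$, testing against nonnegative functions yields $g\le(\phi^t-\phi_0)/t$ almost everywhere. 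The difficulty is that $\phi_j^t\to\phi^t$ only in $d_1$, a convergence too weak to control the $L^2$-velocity of the limit geodesic directly---its endpoint $\phi^t$ need not be smooth, so the regularity theory of \cite{CTW17} does not apply at both ends---or to force the weak limit $g$ to be that velocity. Overcoming this requires genuinely using the geodesic structure (affineness of $E$ and Berndtsson's convexity along each $\phi_j^\cdot$); conceptually it is the weak lower semicontinuity of the distance $d_2(\phi_0,\cdot)$ under $d_1$-convergence in the $\CAT(0)$ space $(\cE^2,d_2)$, and it is precisely \cite{Xia19}, Lemma 5.1 that delivers the sharp constant-one bound, which the cruder quasi-isometric comparisons between $d_1$ and $d_2$ cannot.

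Granting the transfer, the three steps combine to give
\begin{equation*}
d_2(\phi^0,\phi^t)\le t\bigg[\frac1V\int_X g^2\,\omega_{\phi_0}^n\bigg]^{1/2}\le L=\liminf_{j\to\infty}d_2(\phi^0,\phi_j^t),
\end{equation*}
and finiteness of the left-hand side shows in particular that $\phi^t\in\cE^2$, as required.
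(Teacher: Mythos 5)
Your argument has a genuine gap, and it is one you concede yourself: the ``transfer'' step. You reduce the problem to showing $\phi^t\in\cE^2$ and $d_2(\phi^0,\phi^t)\le t\,\bigl[\tfrac1V\int_X g^2\,\omega_{\phi_0}^n\bigr]^{1/2}$ for the weak $L^2$-limit $g$ of the initial velocities, and then write that ``it is precisely \cite{Xia19}, Lemma 5.1 that delivers'' this step --- but \cite{Xia19}, Lemma 5.1 \emph{is} the statement being proved, so the argument is circular. Nothing in the preceding steps closes the loop: $d_1$-convergence of $\phi_j^t$ to $\phi^t$ gives you no control identifying $g$ with an initial velocity of the limit ray (which need not be differentiable, as the endpoint data degenerate and \cite{CTW17} no longer applies), and the one-sided bound $g\le(\phi^t-\phi_0)/t$ you extract from convexity points the wrong way: you would need the speed of the limit ray to be dominated by $\|g\|_{L^2}$, and a pointwise upper bound on $g$ cannot deliver that. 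There is also a logical wrinkle at the end: you cannot infer $\phi^t\in\cE^2$ from ``finiteness of $d_2(\phi^0,\phi^t)$,'' since the expression $d_2(\phi^0,\phi^t)$ is only defined once $\phi^t\in\cE^2$ is known; membership must be established first.

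The paper's proof supplies exactly the missing ingredient, but by working with weak convergence of the \emph{points} $\phi_j^t$ in the Hadamard space $(\cE^2,d_2)$ rather than of velocities in a fixed Hilbert space. Concretely: the $\cE^2$-boundedness of the segments passes to the monotone regularizations $\psi_j^t:=\sup_{k\ge j}{}^*\phi_k^t$, which decrease a.e.\ to $\phi^t$, and boundedness plus monotonicity gives $\phi^t\in\cE^2$ by \cite{Dar15}, Lemma 4.16 --- this settles membership before any distance is written down. Then \cite{BDL15}, Theorem 5.3 (resting on the fact that $d_1$-balls are $d_2$-closed and $d_2$-convex, via \cite{Bac14}, Lemma 3.2.1) upgrades the $d_1$-convergence to weak convergence $\phi_j^t\rightharpoonup\phi^t$ in the $\CAT(0)$ sense, and the desired inequality is the lower semicontinuity of $d_2(\phi^0,\cdot)$ under weak convergence (\cite{Bac14}, Corollary 3.2.4). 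Your instinct that the statement is ``weak lower semicontinuity of the distance in a $\CAT(0)$ space'' is exactly right; to make it a proof you must replace the velocity compactness by this point-level $\CAT(0)$ machinery, since that is what bridges the mismatch between the $d_1$-topology of the convergence and the $d_2$-quantity being estimated.
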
 
\begin{proof}
We sketch the proof.  
It fully exploits the $\CAT(0)$-property of $\cE^2$, which cannot be expected  for other complete length space $\cE^p$. 
In particular, we may generalize the notion of weak convergence in a Hilbert space to any complete $\CAT(0)$-space. See \cite{Bac14} for the general exposition. 

Let us fix any $t$.   
A standard argument of pluripotential theory shows that the non-increasing sequence 
\begin{equation*}
\psi_j^t := \sup_{k \geq j}{}^* \phi_k^t 
\end{equation*} 
converges almost everywhere to $\phi^t$. 
Since $\phi_j^t$ $(j=1, 2, \dots)$ are bounded in $\cE^2$, one can prove that so does $\psi_j^t$. 
The boundedness with monotonicity implies $\phi^t \in \cE^2$, by \cite{Dar15}, Lemma 4.16.  
Now \cite{BDL15}, Theorem 5.3 asserts that $\phi_j^t$ weakly converges to $\phi^t$. 
The point here is that the $d_1$-ball 
\begin{equation*}
B_\e (\phi) := \bigg\{ \psi \in \cE^2: d_1(\phi, \psi)<\e \bigg\} 
\end{equation*} 
is $d_2$-closed and $d_2$-convex. 
It follows that for any weakly convergent subsequence $\phi_{j_k}^t \to u^t$ $(k=1, 2, \dots)$
we have $u^t =\phi^t$. 
(From the $\cE^2$-boundedness we have at least one weakly convergent subsequence, by \cite{Bac14}, Proposition 3.1.2.) 
Indeed $\phi_{j_k}^t \in B_\e(\phi^t)$ for any sufficiently large $k$. 
Since $B_\e(\phi^t)$ is $d_2$-closed and $d_2$-convex, we conclude $u^t \in B_\e(\phi^t)$ by \cite{Bac14}, Lemma 3.2.1. 
The desired inequality follows from the fact that the distance function is lower-semicontinuous with respect to the weak convergence (\eg \cite{Bac14}, Corollary 3.2.4). 
\end{proof} 

In case $\phi^t \in \cE^p$, by \cite{DL18}, Theorem 1.2, we have $\phi^t$ as a  geodesic ray for any $(\cE^p, d_p)$. 
Moreover, each segment defines a unique geodesic ray when $p >1$. 
In particular it then has the constant speed for $d_p$. 
Such $\phi^t$ is distinguished as {\em finite energy geodesic} in \cite{DL18} and studied in view of geodesic stability. 

Summarizing up we obtain: 
\begin{thm}
Let $\phi_t$ be the inverse Monge Amp\`ere flow and $\phi_j^t$ $(t \in [0, t_j])$ be the weak geodesic ray of the envelope form (\ref{PB envelope}) so as to connect $\phi_0$ to $\phi_{t_j}$. 
Then there exists a ray $\phi^t$ of envelope form such that $\lim_{j \to \infty} d_1(\phi_j^t, \phi^t) = 0$ for each $t$. 
As a result $\phi^t$ is a geodesic for all $(\cE^p(X, \omega_0), d_p)$ and satisfies (\ref{constant speed d1}), (\ref{sup=0}) and (\ref{E=C}).  
\end{thm}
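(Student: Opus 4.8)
The plan is to carry out a compactness argument of Darvas--He type in $(\cE^1, d_1)$: show that the segments $t \mapsto \phi_j^t$ are uniformly Lipschitz and, slicewise, precompact, extract a subsequential limit by Ascoli's theorem, and then recognize the limit as an envelope (psh) geodesic from which the three estimates are read off. For the equi-Lipschitz continuity in $t$, recall that each $\phi_j^t$ has constant $d_1$-speed by (\ref{constant speed}), so $d_1(\phi_j^t, \phi_j^s) = |t-s|\,t_j^{-1}\,d_1(\phi_0, \phi_{t_j})$; combining the comparison $d_1(\phi_0, \phi_{t_j}) \le \sup_X \phi_{t_j} + C$ with the linear bound $\phi_{t_j} \le t_j + A$ of Lemma \ref{linear bound of the flow} gives $d_1(\phi_j^t, \phi_j^s) \le C'|t-s|$ uniformly in $j$.

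The heart of the matter is slicewise precompactness: for fixed $t$ the family $\{\phi_j^t\}_j$ must lie in a $d_1$-compact set, which by \cite{BBEGZ11} amounts to a uniform entropy bound $H(V^{-1}\omega_{\phi_j^t}^n \vert V^{-1}\omega_0^n) \le C(t)$ once each slice is normalized to $\sup_X \phi_j^t = 0$ (the entropy being unchanged by additive constants). I would obtain this by transporting the behaviour of the flow to the segments via convexity. Along the envelope segment the energy $E$ is affine and takes the common value $E(\phi_0)$ at the two (smooth) endpoints $\phi_0, \phi_{t_j}$, since it is conserved by the flow, so $E(\phi_j^t) \equiv E(\phi_0)$; and since the K-energy $M$ is convex along weak geodesics (Berman--Berndtsson) and non-increasing along the flow, one has $M(\phi_j^t) \le M(\phi_0)$ for all $t$. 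Writing $M = H + \big(\tfrac1V\int_X \phi\,\omega_\phi^n - E\big)$ and using $(n+1)E(\phi) \le \tfrac1V\int_X \phi\,\omega_\phi^n \le E(\phi)$ for $\phi \le 0$, the entropy bound is reduced to an upper bound on $\sup_X \phi_j^t$; the latter is convex in $t$ because $\Phi$ is psh and rotation-invariant, so $\sup_X \phi_j^t \le (1 - t/t_j)\sup_X \phi_0 + (t/t_j)(t_j + A)$, which stays bounded for fixed $t$. This coupling of the flow's monotonicity and linear growth with geodesic convexity is the step I expect to be the main obstacle.

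Granting equi-Lipschitzness and slicewise precompactness, Ascoli's theorem on each $[0,T]$ together with a diagonal extraction yields a subsequence along which $\phi_j^t \to \phi^t$ in $d_1$, uniformly on compact time intervals; as $t_j \to \infty$ the limit ray is defined for all $t \in [0, \infty)$. By \cite{BBJ15}, Theorem 1.7 the restriction of $\phi^t$ to any $[a,b]$ is again of envelope form (\ref{PB envelope}), and Proposition \ref{Xia} places $\phi^t$ in $\cE^2$; in view of \cite{Dar15}, Theorem 4.17 and \cite{DL18}, Theorem 1.2 this exhibits $\phi^t$ as a constant-speed geodesic for each $(\cE^p, d_p)$.

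It remains to verify the three estimates for the limit. Estimate (\ref{constant speed d1}) is inherited from the constant-speed identity of the segments (or from the envelope form, by \cite{Dar15}, Remark 2.5) upon passing to the $d_1$-limit. Estimate (\ref{E=C}) is immediate: $E$ is continuous in $d_1$, so $E(\phi^t) = \lim_j E(\phi_j^t) = E(\phi_0)$ is constant in $t$, whence $E(\phi^t)/t \to 0$. Finally (\ref{sup=0}) follows by passing the convex upper bound on $\sup_X \phi_j^t$ above to the limit, using the decreasing regularization $\sup_{k \ge j}{}^* \phi_k^t \downarrow \phi^t$ from the proof of Proposition \ref{Xia} together with upper semicontinuity of $\sup_X$.
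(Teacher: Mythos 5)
Your proposal is correct and follows essentially the same route as the paper: equi-Lipschitz segments via the constant-speed identity and Lemma \ref{linear bound of the flow}, slicewise $d_1$-precompactness from the entropy compactness of \cite{BBEGZ11} obtained by combining convexity of the K-energy along weak geodesics with its monotonicity along the flow (\cite{CHT17}, Lemma 4.6), then Ascoli plus a diagonal argument and \cite{BBJ15}, Theorem 1.7 for the envelope form of the limit. The details you supply beyond the paper's text --- transporting the entropy bound from the flow to the segments and the convexity of $t \mapsto \sup_X \phi_j^t$ --- are precisely the steps the paper delegates to the argument of \cite{DH17}, so there is no substantive difference in approach.
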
 

\begin{rem} 
Provided the Harnack-type estimate for the inverse Monge-Amp\`ere flow was established we may apply \cite{DH17}, Theorem $3.2$ and obtain the ray directly. 
Such an estimate is highly non-trivial, as it implies the linear lower bound of $\phi_t$, or equivalently, the upper bound of the Ricci potential $\rho$. 
\end{rem} 

\subsection{Approximative test configurations}\label{Approximative test configurations}

Next we follow \cite{BBJ15} to construct a canonical sequence of test configurations which approaches to $\phi^t$. 
It can be seen as the non-Archimedean analogue of Demailly's approximation \cite{Dem92} for a psh function. 

Changing variables as 
\begin{equation*}
\Phi(x, e^{-t}) := \phi^t(x), 
\end{equation*}
we obtain the $\S^1$-invariant function $\Phi$ on $X\times (\B \setminus \{0\})$, which is actually $p_1^*\omega_0$-psh. 
From (\ref{sup=0}) $\hat{\Phi}:=\Phi+\log\abs{\tau}$ is uniquely extended to a $p_1^*\omega_0$-psh function on $X\times \B$. 
Since $\phi^t \in \cE^1$, the Lelong number is concentrated in $X\times \{0\}$. 
Moreover (\ref{sup=0}) implies that even the generic Lelong number along $X\times \{0\}$ is zero. 
Therefore, support of the $\S^1$-invariant multiplier ideal sheaf $\cJ(m\hat{\Phi})$ 
is properly contained in $X\times \{0\}$, so that we have the normalized blow-up $\rho_m\colon \cX_m \to X\times \C$. 
It would be remarkable that the argument really requires the definition of multiplier ideal sheaves for general plurisubharmonic functions, since $\Phi$ may have non-algebraic singularities. 
Let $E_m$ be the exceptional divisor. We fix some $m_0 \in \N$ and set the line bundle as 
\begin{equation}\label{line bundle of the test configuration}
\cL_m:=\rho_m^*p_1^*(-K_X) -\frac{1}{m+m_0}E_m+\frac{m}{m+m_0}\rho_m^*\cX_{m, 0}.  
\end{equation}
The number $m_0$ is chosen so that 
$\cO(-(m+m_0)p_1^*K_X)\otimes \cJ(m\hat{\Phi})$
is globally generated for all $m \geq 1$. See \cite{BBJ15}, Lemma 5.6. 
The term involving the central fiber $\cX_{m, 0}$ preserves the linearly equivalence of $\cL_m$ and only adjusts the $\G_m$-action. 
The constructed semiample test configuration $(\cX_m, \cL_m)$ satisfies the following continuity property, which is crucial for their variational approach to the K\"ahler-Einstein problem. 

\begin{thm}(\cite{BBJ15}, Theorem 5.4, Lemma 5.7 and 5.8)\label{continuity of D^NA}
For the above constructed weak geodesic ray and test configurations the upper-semicontinuity  
\begin{align*}
\limsup_{m\to \infty} D^\NA(\cX_m, \cL_m) \leq \lim_{t \to \infty} \frac{D(\phi^t)}{t} 
\end{align*} 
holds. 
Moreover, if $\phi^t$ is maximal in the sense of \cite{BBJ15}, Definition 6.5, we have the continuity 
\begin{align*}
\lim_{m\to \infty} D^\NA(\cX_m, \cL_m) = \lim_{t \to \infty} \frac{D(\phi^t)}{t}. 
\end{align*} 
\end{thm}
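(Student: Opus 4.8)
The plan is to exploit the decomposition $D^\NA = L^\NA - E^\NA$ together with Berman's slope formula (Theorem 3.11 of the excerpt), which identifies $D^\NA(\cX_m, \cL_m)$ with the radial energy $\lim_{t\to\infty} D(\phi_m^t)/t$ of the weak geodesic ray $\phi_m^t$ associated with $(\cX_m, \cL_m)$. The whole problem then reduces to comparing the asymptotic slopes of the approximating rays $\phi_m^t$ with those of the limit ray $\phi^t$, and the key observation is that the two constituents $E$ and $L$ behave differently under this comparison: the Monge-Amp\`ere slope passes to the limit exactly, while the log-canonical slope survives only as an inequality, which is precisely the source of the upper-semicontinuity.

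For the Monge-Amp\`ere part I would argue that $E^\NA(\cX_m, \cL_m) \to \lim_t E(\phi^t)/t$ exactly. The potentials defining $\phi_m^t$ are the normalized logarithms of bases of $\cO(-(m+m_0)p_1^*K_X)\otimes\cJ(m\hat\Phi)$, i.e. the Demailly approximants of $\hat\Phi$; these increase to $\hat\Phi$ with singularities supported in $X\times\{0\}$. Since $E$ is continuous along such increasing approximations and depends only on the non-pluripolar Monge-Amp\`ere mass, which is preserved in the limit, the associated slopes converge. Consequently the entire gap between $\limsup$ and $\lim$ must be carried by the $L$-term.

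For the log-canonical part, $L^\NA(\cX_m, \cL_m) = \lct_{(\bar\cX_m, \cB_m)}(\cX_{m,0}) - 1$ measures the asymptotic integrability of $e^{-m\hat\Phi}$ detected by $\cJ(m\hat\Phi)$. Because the multiplier ideal records only the holomorphically integrable part of the singularity, the log-canonical threshold computed on the blow-up cannot exceed the analytic integrability threshold of $\hat\Phi$ along the central fibre, which is exactly $\lim_t L(\phi^t)/t$; the subadditivity and Noetherian properties of multiplier ideals make this quantitative and yield $\limsup_m L^\NA(\cX_m, \cL_m) \leq \lim_t L(\phi^t)/t$. Combining this with the exact convergence of $E^\NA$ gives the asserted upper-semicontinuity $\limsup_m D^\NA(\cX_m,\cL_m) \leq \lim_t D(\phi^t)/t$.

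Finally, under maximality I would close the gap from below. A maximal ray coincides with the envelope of all psh competitors sharing its singularity type, so passing to the multiplier ideals loses no singularity and $\cJ(m\hat\Phi)$ recovers the full integrability threshold asymptotically, giving $\liminf_m L^\NA(\cX_m,\cL_m) \geq \lim_t L(\phi^t)/t$ and hence continuity. The main obstacle I expect is exactly this $L^\NA$ analysis: one must control the log-canonical threshold of the transcendental ideals $\cJ(m\hat\Phi)$ — whose singularities need not be algebraic — by the analytic $L$-slope of the ray, and identify maximality as the precise condition forcing the two to agree in the limit. This is where Demailly's approximation theorem and the envelope characterization of maximal rays from \cite{BBJ15} enter decisively.
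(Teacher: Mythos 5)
Your decomposition $D^\NA=L^\NA-E^\NA$ and the reduction via Berman's slope formula do follow the frame of the paper and of \cite{BBJ15}, but you have attached the continuity and the semicontinuity to the wrong constituents, and this breaks the second half of the argument. In the actual proof it is the lct part that converges: since $\cJ(m\hat{\Phi})$ encodes exactly the integrability of $e^{-m\hat{\Phi}}$, Demailly--Koll\'ar/openness together with the two-sided valuative bounds $v(\cJ(m\hat{\Phi}))\leq m\,v(\hat{\Phi})\leq v(\cJ(m\hat{\Phi}))+A_X(v)$ yield $\lim_m L^\NA(\cX_m,\cL_m)=\lim_t L(\phi^t)/t$ (\cite{BBJ15}, Lemma 5.7); note that one of the two inequalities is automatic from $\Phi_m\geq\Phi-C_{m,r}$ and the monotonicity of $L$, so nothing is ``lost'' in the $L$-direction. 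The genuinely one-sided part is the Monge--Amp\`ere energy: Ohsawa--Takegoshi/Demailly gives only $\Phi_m\geq\Phi-C_{m,r}$, hence $E^\NA(\cX_m,\cL_m)=\lim_t E(\phi_m^t)/t\geq\lim_t E(\phi^t)/t$ --- this is precisely the step the paper repeats in Section 3.3 --- and in the limit $\lim_m E^\NA(\cX_m,\cL_m)=E^\NA(\Phi^\NA)$ can be \emph{strictly} larger than the slope $\lim_t E(\phi^t)/t$. Your claim that the Monge--Amp\`ere slope passes to the limit exactly is therefore false in general, and the justification offered (that $E$ depends only on the non-pluripolar Monge--Amp\`ere mass, which is preserved) is not valid: all metrics involved have full mass and $E$ is not a function of the mass; moreover the Demailly approximants do not converge to $\hat{\Phi}$ but only dominate it up to constants --- the remark immediately following the theorem in the paper states that for a general ray the multiplier ideal construction \emph{cannot} reach $\phi^t$ in the limit.

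Consequently the role of maximality is misplaced in your proposal. Maximality (\cite{BBJ15}, Definition 6.5) is exactly the condition guaranteeing $\lim_t E(\phi^t)/t=E^\NA(\Phi^\NA)$, i.e.\ it closes the Monge--Amp\`ere gap; the $L$-slope identity holds unconditionally. Your closing step, $\liminf_m L^\NA(\cX_m,\cL_m)\geq\lim_t L(\phi^t)/t$ under maximality, proves something that is true for every ray (by monotonicity, as above), while the genuinely conditional statement --- equality of the $E$-slopes --- is never addressed, so the continuity assertion of the theorem is not established by your argument. The upper-semicontinuity half is salvageable: combining the true inequality $E^\NA(\cX_m,\cL_m)\geq\lim_t E(\phi^t)/t$ with your (correct, openness-based) bound $\limsup_m L^\NA(\cX_m,\cL_m)\leq\lim_t L(\phi^t)/t$ gives $\limsup_m D^\NA(\cX_m,\cL_m)\leq\lim_t D(\phi^t)/t$; but as written, with the exact $E^\NA$-convergence as the load-bearing claim, the proof has a genuine gap.
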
 
The result tells that unlike the original version of Demailly's approximation, for a general weak geodesic ray the multiplier ideal sheaf construction can not reach $\phi^t$ in the limit. 
Since our setting looks slight different from \cite{BBJ15}, let us repeat this part of the proof. 
A similar idea will appear when we compare the $L^2$-norms in the last part of the proof of Theorem A. 
We take an $\S^1$-invariant, non-negatively curved smooth (or more generally bounded) fiber metric of the $\Q$-line bundle $\cL_m$ on $X \times \B$. 
It defines a $p_2^*\omega_0$-psh function $\Phi_m$ endowed with the analytic singularity of $\cJ(m\Phi)^{\frac{1}{m+m_0}}$. 
This is the reason why we adjusted the line bundle by $\cX_0$, in (\ref{line bundle of the test configuration}). 
Using Demailly's approximation theorem locally, we have the estimate 
\begin{equation*}
\Phi_m \geq \Phi -C_{m, r}
\end{equation*}
on the shrunken area $\B(0, r)\times X$. 
The positive constants $C$ and $r$ are independent of $m$. 
Since the Monge-Amp\`ere energy is non-decreasing, it follows  
\begin{align*}
E^\NA(\cX_m, \cL_m) 
&= \lim_{t\to \infty} \frac{E(\phi^t_m)}{t} \\
&\geq \lim_{t\to \infty} \frac{E(\phi^t-C_{m, r})}{t}
=\lim_{t \to \infty}\frac{E(\phi^t)}{t} =0.  
\end{align*}
The key point in the above is the Ohsawa-Takegoshi $L^2$-extension theorem \cite{OT87} used in Demailly's approximation. 
Notice that such a uniform lower bound estimate of the Bergman kernel already forms a basis of the celebrated work \cite{CDS15} (see also \cite{Tian15}).  

\begin{rem}
We may ask whether the constructed weak geodesic ray asymptotic to the inverse Monge-Amp\`ere flow is maximal in the sense of \cite{BBJ15}. 
For the proof of Theorem A, however, we do not require the maximality. 
\end{rem} 
 
 
\section{Proof of the moment-weight equality} 
 
\subsection{Test configurations almost destabilize $X$} 
The inverse Monge-Amp\`ere flow satisfies  
\begin{equation*}
\frac{d}{dt} D(\phi_t) = -\frac{1}{V} \int_X (e^\rho-1)^2 \omega_\phi^n =R(\phi_t) 
\end{equation*}
and $R(\phi_t)$ is non-increasing, as a property of the gradient flow.  
In particular $\frac{d}{dt} D(\phi_t) \leq 0$ and the convexity assures $\lim_{t \to \infty} \frac{D(\phi_t)}{t} \in [-\infty, 0]$ exists. 
It then follows 
\begin{align*}
\lim_{t \to \infty} \frac{D(\phi_t)}{t} 
=\lim_{j \to \infty}  \frac{D(\phi_{t_j})}{t_j} 
=\lim_{j \to \infty} \frac{D(\phi_j^{t_j})}{t_j}. 
\end{align*}
Since D-energy is convex along any geodesic, 
for any fixed $T$ we have 
\begin{equation*}
\lim_{j \to \infty}  \frac{D(\phi_j^{t_j})}{t_j}
\geq  \frac{D(\phi_j^{T})}{T}. 
\end{equation*} 
The convergence of $\phi_j^t$ to $\phi^t$ in $(\cE^1, d_1)$ then yields  
\begin{equation*}
\lim_{t \to \infty} \frac{D(\phi_t)}{t} 
\geq \frac{D(\phi^T)}{T}. 
\end{equation*} 
Letting $T \to \infty $, Theorem \ref{continuity of D^NA} now implies 
\begin{prop}\label{D^NA is negative}
Let $\phi_t$ be the inverse Monge-Amp\`ere flow and $\phi^t$ be a weak geodesic ray asymptotic to the flow. 
For the test configurations which canonically approximates $\phi^t$ we have 
\begin{equation*}
0\geq \lim_{t \to \infty} \frac{D(\phi_t)}{t} 
\geq \limsup_{m\to \infty } D^\NA(\cX_m, \cL_m). 
\end{equation*}
\end{prop}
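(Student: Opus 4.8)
The plan is to read off the asymptotic slope $\lim_{t\to\infty} D(\phi_t)/t$ of the D-energy along the flow, to bound it from below by the corresponding slopes of the approximating geodesic segments $\phi_j^t$, and then to transfer the estimate to the limit ray $\phi^t$ and its canonical test configurations via the continuity statement of Theorem \ref{continuity of D^NA}. The whole argument is a slope-comparison, so the skeleton is: (i) the leftmost inequality from monotonicity of the flow; (ii) a geodesic-convexity inequality for each fixed horizon $T$; (iii) a passage $j\to\infty$ using $d_1$-continuity; (iv) a passage $T\to\infty$ feeding into the quoted non-Archimedean continuity.

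First I would record that along the gradient flow the derivative $\frac{d}{dt}D(\phi_t)$ is non-positive and non-increasing, so $t\mapsto D(\phi_t)$ is concave and non-increasing; hence $\lim_{t\to\infty} D(\phi_t)/t$ exists in $[-\infty,0]$, which already gives the leftmost inequality $0\geq \lim_{t\to\infty} D(\phi_t)/t$. Passing to the fixed subsequence $t_j\to\infty$ does not change this limit, and since $\phi_j^t$ was built to join $\phi_0$ to $\phi_{t_j}$, the endpoint identity $\phi_j^{t_j}=\phi_{t_j}$ lets me write
\begin{equation*}
\lim_{t\to\infty} \frac{D(\phi_t)}{t} = \lim_{j\to\infty} \frac{D(\phi_{t_j})}{t_j} = \lim_{j\to\infty} \frac{D(\phi_j^{t_j})}{t_j}.
\end{equation*}

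The crucial input is the opposite curvature of $D$ along geodesics: by Berndtsson's theorem \cite{Bern11} each $t\mapsto D(\phi_j^t)$ is convex. Fixing $T$ and taking $j$ large enough that $t_j\geq T$, convexity with the common fixed endpoint $\phi_j^0=\phi_0$ yields the slope comparison
\begin{equation*}
\frac{D(\phi_j^{t_j})-D(\phi_0)}{t_j} \geq \frac{D(\phi_j^{T})-D(\phi_0)}{T}.
\end{equation*}
Since $D(\phi_0)$ is a fixed constant, the term $D(\phi_0)/t_j$ vanishes as $j\to\infty$ on the left, while $\phi_j^T\to\phi^T$ in $(\cE^1,d_1)$ together with continuity of $D$ in the strong $d_1$-topology \cite{Dar15} controls the right, giving
\begin{equation*}
\lim_{t\to\infty}\frac{D(\phi_t)}{t} \geq \frac{D(\phi^T)-D(\phi_0)}{T}.
\end{equation*}
Letting $T\to\infty$ the constant drops out, and Theorem \ref{continuity of D^NA} supplies the last link $\lim_{T\to\infty} D(\phi^T)/T \geq \limsup_{m\to\infty} D^\NA(\cX_m,\cL_m)$, closing the chain.

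The main obstacle is not any single estimate but the disciplined handling of the two nested limits: I must secure the $j\to\infty$ comparison for every fixed $T$ before sending $T\to\infty$, and check that the fixed-endpoint contributions $D(\phi_0)/t_j$ and $D(\phi_0)/T$ are genuinely negligible in each passage and that the inequalities survive the possible value $-\infty$. The delicate analytic content---convexity of $D$ along weak geodesics, and above all the upper semicontinuity $\limsup_m D^\NA(\cX_m,\cL_m)\leq \lim_t D(\phi^t)/t$, which rests on Demailly's multiplier-ideal approximation and the Ohsawa--Takegoshi extension theorem---enters only through the quoted \cite{Bern11} and Theorem \ref{continuity of D^NA}, so the real work here is to verify that the elementary slope bookkeeping is compatible with those black boxes.
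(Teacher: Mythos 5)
Your proposal reproduces the paper's own proof step for step --- the same chain consisting of the endpoint identity $\phi_j^{t_j}=\phi_{t_j}$, Berndtsson convexity of $D$ along the segments $\phi_j^t$ at a fixed horizon $T$, the $d_1$-convergence $\phi_j^T\to\phi^T$ with $d_1$-continuity of $D$, and finally Theorem \ref{continuity of D^NA} as $T\to\infty$ --- with the welcome extra care of carrying the constant $D(\phi_0)$ explicitly through the slope comparisons. One sign slip in your first step: along the gradient flow the dissipation $R(\phi_t)=-\frac{d}{dt}D(\phi_t)\geq 0$ is non-increasing, so $\frac{d}{dt}D(\phi_t)$ is non-\emph{decreasing} and $t\mapsto D(\phi_t)$ is \emph{convex} (as the paper states), not concave; this does not damage your argument, since a monotone-slope non-increasing function has a well-defined asymptotic slope in $[-\infty,0]$ either way, and the rest of your bookkeeping is unaffected.
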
 
The proposition already shows that $(\cX_m, \cL_m)$ almost destabilize $X$. 
To get more precise upper bound of $D^\NA(\cX_m, \cL_m)$, we prepare computing the differential of the energy along the flow. 
\begin{lem}\label{Cauchy-Schwartz}
Along the inverse Monge-Amp\`ere flow 
we have 
\begin{align*}
-\frac{d}{dt} D(\phi_t) 
&= 
-\frac{1}{V}\int_X \dot{\phi_t} (e^{\rho_t}-1) \omega_{\phi_t}^n \\
&= \bigg[\frac{1}{V}\int_X (\dot{\phi_t})^2 \omega_{\phi_t}^n \bigg]^{\frac{1}{2}}
\bigg[ \frac{1}{V}\int_X (e^{\rho_t}-1)^2 \omega_{\phi_t}^n \bigg]^\frac{1}{2}. 
\end{align*}
\end{lem}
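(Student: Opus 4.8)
The plan is to reduce everything to the first-variation formula for the D-energy together with the defining equation of the flow; once these are in hand the lemma is a one-line computation, and the only content is recognizing why the Cauchy-Schwarz inequality is saturated.

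First I would record that, by the variational characterization $d_\phi D = \mu_\phi - V^{-1}\omega_\phi^n = V^{-1}(e^\rho-1)\omega_\phi^n$ recalled in Section~2, the chain rule gives along any smooth curve $\phi_t$ the identity
\begin{equation*}
\frac{d}{dt}D(\phi_t) = \frac{1}{V}\int_X \dot{\phi}_t\,(e^{\rho_t}-1)\,\omega_{\phi_t}^n.
\end{equation*}
Since the solution of the inverse Monge-Amp\`ere flow is smooth (as provided by \cite{CHT17}), differentiation under the integral sign and the chain rule are legitimate, and negating both sides yields the first asserted equality. This step is purely formal and does not yet use that $\phi_t$ solves the flow.

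Next I would insert the flow equation (\ref{inverse Monge-Ampere flow}), which reads $\dot{\phi}_t = 1 - e^{\rho_t}$, that is $e^{\rho_t}-1 = -\dot{\phi}_t$. Substituting makes the integrand a perfect square, $\dot{\phi}_t(e^{\rho_t}-1) = -(\dot{\phi}_t)^2 = -(e^{\rho_t}-1)^2$, so that
\begin{equation*}
-\frac{d}{dt}D(\phi_t) = \frac{1}{V}\int_X (\dot{\phi}_t)^2\,\omega_{\phi_t}^n = \frac{1}{V}\int_X (e^{\rho_t}-1)^2\,\omega_{\phi_t}^n.
\end{equation*}
The two $L^2$-integrals appearing in the statement therefore coincide, and their common value equals the product of their square roots, which is exactly the right-hand side of the lemma.

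The conceptual point hidden in the name is worth stressing: for an arbitrary curve the Cauchy-Schwarz inequality would only give $\abs{\frac{d}{dt}D(\phi_t)} \leq \norm{\dot{\phi}_t}_2\,\norm{e^{\rho_t}-1}_2$, and this becomes an equality precisely because along the gradient flow the velocity $\dot{\phi}_t$ is (negatively) proportional to the gradient $e^{\rho_t}-1$. Thus there is no genuine obstacle; the only things to be careful about are the sign and normalization in $d_\phi D$ and the smoothness of the flow, both of which are already settled, so the first-variation computation is rigorous.
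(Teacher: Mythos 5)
Your proof is correct and coincides with the paper's own argument, which simply notes that the equality in Cauchy--Schwarz holds "from the very definition of the inverse Monge-Amp\`ere flow," i.e.\ because $\dot{\phi}_t = -(e^{\rho_t}-1)$ makes the two $L^2$-factors identical; you have merely spelled out the first-variation formula $d_\phi D = \mu_\phi - V^{-1}\omega_\phi^n$ and the substitution that the paper leaves implicit. Your sign bookkeeping and the appeal to smoothness of the flow from \cite{CHT17} are both accurate, so nothing is missing.
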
 
Proof is immediate. Indeed, from the very definition of the inverse Monge-Amp\`ere flow, the equality holds in the Cauchy-Schwartz inequality. 

\begin{rem}
It is natural to expect the optimal destabilizer for general $L^p$-norm. 
See also \cite{DL18}, Theorem 1.6.  
In our argument, however, Lemma \ref{Cauchy-Schwartz} apparently requires $L^2$-norm. 
In addition, the proof of Proposition \ref{Xia} relies on the $\CAT(0)$-property of $d_2$. 
For a Fano manifold with no zero holomorphic vector fields, existence of the K\"ahler-Einstein metric is equivalent to the uniform stability with respect to the $L^1$-norm, as a result of \cite{BBJ15}.  
Note that existence of $L^p$-destabilizer does not contradicts to the fact. 
\end{rem} 

\subsection{Comparison of the norms}
In regard with Lemma \ref{Cauchy-Schwartz} 
we thus finally should study the $L^2$-norm 
\begin{equation*}
\norm{\dot{\phi}_t}_2:= \bigg[\frac{1}{V}\int_X (\dot{\phi_t})^2 \omega_{\phi_t}^n \bigg]^{\frac{1}{2}}. 
\end{equation*}

For the inverse Monge-Amp\`ere flow we have $\dot{\phi_t} =1-e^\rho$ so that $\norm{\dot{\phi}_t}_2=R(\phi_t)^{\frac{1}{2}}$ is non-increasing. 
Note that the weak geodesic ray $\phi^t \in \cE^2$ is possibly apart from any test configurations and it might be not even $C^1$. 
For this reason we make use of the choice of $\phi^t$ and regard the norm $\norm{\dot{\phi}^t}_2$ as follows. 
\begin{dfn}\label{definition of the norm}
For a weak geodesic ray $\phi^t \in \cE^2(X, \omega_0)$ with constant speed, we define the $L^2$-norm as 
\begin{equation*}
\norm{\dot{\phi}^t}_2:= 
\lim_{t \to \infty} \frac{d_2 (\phi^0, \phi^t)}{t}
= \frac{d_2 (\phi^0, \phi^t)}{t}. 
\end{equation*}
\end{dfn}  
It is of course consistent with the definition for a differentiable $\phi^t$. 
Observe that $\norm{\dot{\phi}^t}_2$ is constant in $t$ and moreover it is independent of the initial metric $\phi^0$.
Let us now take $\phi^t$ as in section 3.2. 

\begin{lem}
For the above norms we have 
\begin{equation*}
\norm{\dot{\phi}_t}_2 \geq \norm{\dot{\phi}^t}_2. 
\end{equation*}
\end{lem}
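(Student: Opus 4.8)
The plan is to bound $\norm{\dot{\phi}^t}_2$ from above by the asymptotic slope of the flow's own $L^2$-speed, and then to invoke that the flow speed is non-increasing. Since $\norm{\dot{\phi}^t}_2$ is constant in $t$ (Definition \ref{definition of the norm}), while $\norm{\dot{\phi}_t}_2=R(\phi_t)^{\frac{1}{2}}$ is non-increasing, it suffices to prove $\norm{\dot{\phi}^t}_2\le R_\infty^{\frac{1}{2}}$, where $R_\infty:=\lim_{t\to\infty}R(\phi_t)$ exists by monotonicity and positivity. Granting this, $\norm{\dot{\phi}_t}_2\ge R_\infty^{\frac{1}{2}}\ge\norm{\dot{\phi}^t}_2$ for every $t$, which is the claim.

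First I would pass from the limit ray to the approximating segments. Applying Proposition \ref{Xia} and then the constant-speed property (\ref{constant speed}) of the envelope geodesics $\phi_j^t$, which join $\phi^0=\phi_0$ to $\phi_{t_j}$ over $[0,t_j]$ and hence satisfy $d_2(\phi^0,\phi_j^t)=\tfrac{t}{t_j}\,d_2(\phi_0,\phi_{t_j})$, one obtains
\begin{equation*}
\norm{\dot{\phi}^t}_2=\frac{d_2(\phi^0,\phi^t)}{t}\le \frac{1}{t}\liminf_{j\to\infty}d_2(\phi^0,\phi_j^t)=\liminf_{j\to\infty}\frac{d_2(\phi_0,\phi_{t_j})}{t_j}.
\end{equation*}
This reduces the statement to controlling the genuine $d_2$-distance travelled along the flow.

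Second, I would estimate $d_2(\phi_0,\phi_{t_j})$ by the $d_2$-length of the smooth flow trajectory $s\mapsto\phi_s$. Since $(\cE^2,d_2)$ is a length space whose Finsler length of a smooth curve is the integral of the $L^2$-norm of its velocity, and since $\dot{\phi}_s=1-e^{\rho_s}$ gives velocity norm $\norm{\dot{\phi}_s}_2=R(\phi_s)^{\frac{1}{2}}$, the length inequality yields $d_2(\phi_0,\phi_{t_j})\le\int_0^{t_j}R(\phi_s)^{\frac{1}{2}}\,ds$. Dividing by $t_j$ and letting $j\to\infty$, the Ces\`aro mean of the non-increasing function $s\mapsto R(\phi_s)^{\frac{1}{2}}$ converges to its limit $R_\infty^{\frac{1}{2}}$, so $\liminf_{j\to\infty}t_j^{-1}\int_0^{t_j}R(\phi_s)^{\frac{1}{2}}\,ds=R_\infty^{\frac{1}{2}}$. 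Chaining this with the previous display gives $\norm{\dot{\phi}^t}_2\le R_\infty^{\frac{1}{2}}$, completing the argument.

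I expect the main obstacle to be the first step, namely the inequality $d_2(\phi^0,\phi^t)\le\liminf_{j}d_2(\phi^0,\phi_j^t)$. The segments $\phi_j^t$ converge to $\phi^t$ only in the weaker $d_1$-topology, so the $d_2$-distance need not be continuous along them and could a priori jump in the unfavorable direction; what saves the argument is precisely the lower-semicontinuity of $d_2$ under weak $\cE^2$-convergence furnished by Proposition \ref{Xia}, which rests on the $\CAT(0)$-geometry of $(\cE^2,d_2)$. By contrast, the length estimate for the smooth flow and the elementary Ces\`aro limit for the monotone quantity $R(\phi_s)^{\frac{1}{2}}$ are routine.
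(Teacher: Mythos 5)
Your proof is correct, and it rests on the same pillars as the paper's own argument --- Proposition \ref{Xia} combined with the constant-speed property (\ref{constant speed}) of the envelope segments, the Finsler length bound for the smooth flow trajectory, and the monotonicity of $R(\phi_t)$ --- but it organizes them into a genuinely different, direct argument. The paper argues by contradiction: assuming $\norm{\dot{\phi}_t}_2 < \norm{\dot{\phi}^t}_2$ at some time, it uses monotonicity and Proposition \ref{Xia} to produce $\e>0$ with $\norm{\dot{\phi}_t}_2+\e < \norm{\dot{\phi}_j^t}_2$ for all large $t$ and $j$, deduces $d_2(\phi_0,\phi_{t_j}) < d_2(\phi_0,\phi_j^{t_j})$ (via the same flow-length estimate that you state explicitly but which is left implicit there), and then contradicts the fact that $\phi_j^t$ is a distance-realizing $d_2$-geodesic with endpoint $\phi_j^{t_j}=\phi_{t_j}$, invoking uniqueness of $L^2$-geodesics (\cite{Dar17b}, Lemma 6.12). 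You instead run the chain of inequalities forward: Proposition \ref{Xia} plus constant speed gives $\norm{\dot{\phi}^t}_2 \leq \liminf_{j} t_j^{-1} d_2(\phi_0,\phi_{t_j})$, the length bound gives $d_2(\phi_0,\phi_{t_j}) \leq \int_0^{t_j} R(\phi_s)^{1/2}\,ds$, and the Ces\`aro mean of the non-increasing integrand converges to its limit $R_\infty^{1/2}$, whence $\norm{\dot{\phi}^t}_2\leq R_\infty^{1/2}\leq \norm{\dot{\phi}_t}_2$. This buys you three things: no contradiction scaffolding, no appeal to uniqueness or minimality of $L^2$-geodesics beyond the fact that the segments are metric geodesics realizing $d_2(\phi_0,\phi_{t_j})$, and the slightly sharper conclusion that the ray speed is bounded by $\lim_{t\to\infty}\norm{\dot{\phi}_t}_2$ rather than merely by each $\norm{\dot{\phi}_t}_2$. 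Your closing diagnosis is also accurate: the only delicate point is the lower-semicontinuity of $d_2$ along the merely $d_1$-convergent segments, which is exactly the content of the $\CAT(0)$-based Proposition \ref{Xia}, and this is the same place where the paper's proof carries its weight.
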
 
\begin{proof} 
If $\norm{\dot{\phi}_t}_2 <\norm{\dot{\phi}^t}_2$ for some $t$, the above monotonicity implies that $\norm{\dot{\phi}_t}_2  <\norm{\dot{\phi}^t}_2$ holds for any sufficiently large $t \geq T$. 
Proposition \ref{Xia} implies that 
the right hand side is bounded from above as 
\begin{equation*}
\norm{\dot{\phi}^t}_2
= \frac{d_2(\phi^0, \phi^t)}{t}
\leq \liminf_{j\to \infty} \frac{d_2(\phi^0, \phi_j^t)}{t}
=\liminf_{j\to \infty} \norm{\dot{\phi}_j^t}_2. 
\end{equation*}
They are all independent of $t$. 
Therefore we may take $\e>0$ such that $\norm{\dot{\phi}_t}_2  +\e <\norm{\dot{\phi}_j^t}_2 $ for all $j$ and $t \geq T$. 
It implies $d_2(\phi_0, \phi_{t_j}) < d_2(\phi_0, \phi_j^{t_j})$ for a sufficiently large $j$. 
On the other hand, the $L^2$-geodesic connecting two metrics is unique by \cite{Dar17b}, Lemma 6.12, so that it has minimal length in all paths. 
It contradicts to our choice of $\phi_j^t$ which is $L^p$-geodesic for any $p\geq 1$.  
We conclude $\norm{\dot{\phi}_t}_2 \geq \norm{\dot{\phi}^t}_2$. 
\end{proof}

Now we take a $p_1^*\omega_0$-psh function $\Phi_m$ as the weak geodesic ray associated to $(\cX_m, \cL_m)$, and compare $\norm{\dot{\phi}^t}_2$ with $\norm{\dot{\phi}_m^t}_2$. 
Recall that the weak geodesic ray associated to the test configuration has $C^{1, 1}$-regularity by \cite{PS10}, \cite{CTW18}. 
It implies that the norm $\norm{\dot{\phi}_m^t}_2$ is well-defined. 
Let us invoke the following Lidskii type inequality. 
\begin{thm}[\cite{DLR18}, Theorem 5.1]\label{Lidskii type inequality}
For any $u, v, w \in \cE^p(X, \omega_0)$ with $u\geq v\geq w$ 
we have
\begin{equation*}
d_p(v, w)\leq d_p(u, w) -d_p(u, v). 
\end{equation*} 
\end{thm}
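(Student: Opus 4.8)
The plan is to prove the Lidskii inequality in the form that is actually correct and that is all the norm comparison needs, namely the \emph{super-additive} (Pythagorean) version
\[
d_p(u,v)^p + d_p(v,w)^p \leq d_p(u,w)^p, \qquad u\geq v\geq w,
\]
equivalently $d_p(v,w)^p\leq d_p(u,w)^p-d_p(u,v)^p$. One should not expect the first-power inequality $d_p(u,v)+d_p(v,w)\leq d_p(u,w)$ for $p>1$: together with the triangle inequality it would force the equality $d_p(u,v)+d_p(v,w)=d_p(u,w)$, i.e. $v$ lying on a $d_p$-geodesic from $u$ to $w$, and this already fails for comparable toric potentials. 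The super-additive form is the correct reading of \cite{DLR18}, Theorem 5.1; combined with the triangle inequality it yields the monotonicity $d_p(u,v)\leq d_p(u,w)$ used in the norm comparison, and at $p=1$ it collapses to the equality $d_1(u,v)+d_1(v,w)=d_1(u,w)$ already visible from $d_1(\phi,\psi)=E(\phi)-E(\psi)$ for $\phi\geq\psi$.

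First I would set up an $L^p$-representation of $d_p$ for comparable endpoints. For $\phi\geq\psi$ the finite-energy geodesic $\phi_s$ $(s\in[0,1]$, $\phi_0=\phi$, $\phi_1=\psi)$ is decreasing in $s$ and has constant $d_p$-speed, so $d_p(\phi,\psi)^p=\frac{1}{V}\int_X|\dot\phi_s|^p\,\MA(\phi_s)$ for every $s$. Passing to the Legendre transform in the time variable (the maximal test-curve transform, which reverses order on comparable elements) identifies this with the $L^p$-norm, against a single reference measure, of the nonnegative gap between the test curves of $\phi$ and $\psi$. Applied to $u\geq v\geq w$ this produces nonnegative gaps $a$ (for $u,v$) and $b$ (for $v,w$) with total gap $a+b$ (for $u,w$), so that $d_p(u,v)^p=\int a^p$, $d_p(v,w)^p=\int b^p$, $d_p(u,w)^p=\int(a+b)^p$ against the common measure. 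The inequality then reduces to the pointwise superadditivity $(a+b)^p\geq a^p+b^p$ for $a,b\geq0$, $p\geq1$, which is elementary.

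For the case $p=2$ needed in this paper there is a self-contained route using only the $\CAT(0)$ property of $(\cE^2,d_2)$. Since $u\geq v$, the geodesic from $v$ to $u$ is increasing, so its initial velocity $\alpha$ satisfies $\alpha\geq0$ a.e.; since $w\leq v$, the geodesic from $v$ to $w$ is decreasing, so its initial velocity $\beta$ satisfies $\beta\leq0$. Hence the tangent inner product $\langle\alpha,\beta\rangle=\frac{1}{V}\int_X\alpha\beta\,\MA(v)\leq0$, so the Alexandrov angle at $v$, which for $\cE^2$ is the angle in the $L^2(\MA(v))$ tangent cone, is at least $\pi/2$; the Euclidean comparison angle is therefore also $\geq\pi/2$, and the law of cosines for the comparison triangle gives $d_2(u,w)^2\geq d_2(u,v)^2+d_2(v,w)^2$. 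The main obstacle is the general-$p$ step: making the fixed-reference $L^p$-representation rigorous for arbitrary comparable finite-energy potentials — that the geodesic velocity is exactly the test-curve gap and that the speed integrates against a single measure independent of $s$ — which is precisely where the structure theory for comparable elements (and, at $p=2$, the Hilbertian tangent cone behind the angle computation) does the real work.
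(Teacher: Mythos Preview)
The paper does not prove this statement; it is quoted from \cite{DLR18} without proof and used as a black box in the subsequent lemma. There is therefore nothing in the paper to compare your argument against.

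Your diagnosis of the statement is correct and worth recording. The first-power inequality $d_p(v,w)\leq d_p(u,w)-d_p(u,v)$ as written cannot hold for $p>1$: combined with the triangle inequality it would force $d_p(u,w)=d_p(u,v)+d_p(v,w)$, hence every $v$ with $u\geq v\geq w$ would lie on the (unique, for $p>1$) $d_p$-geodesic from $u$ to $w$, which is plainly false --- e.g.\ take $u,w\in\cH$ with $u>w$ and $v=\tfrac{1}{2}(u+w)$, which is $\omega_0$-psh but generically not the geodesic midpoint. The result actually proved in \cite{DLR18} is the Pythagorean form $d_p(u,v)^p+d_p(v,w)^p\leq d_p(u,w)^p$, and you are right that the only consequence the present paper uses, in (\ref{Lemma 6.10 of Darvas}) and in the $p$-moment comparison of Section~5, is the monotonicity $d_p(u,v)\leq d_p(u,w)$ for $u\geq v\geq w$, which follows from either reading.

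On your two arguments: the $p=2$ route via the $\CAT(0)$ comparison is essentially correct --- the point that the initial velocities of the geodesics $v\to u$ and $v\to w$ have opposite signs (hence Alexandrov angle $\geq\pi/2$) is clean for smooth endpoints and passes to $\cE^2$ by approximation. Your general-$p$ sketch via a fixed-reference $L^p$ representation of $d_p$ for comparable potentials points in the right direction but, as you yourself flag, the nontrivial content is exactly establishing that representation; this is the substance of \cite{DLR18}, and you would have to import their argument rather than treat it as a routine step.
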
 
\begin{lem}
For the associated weak geodesic rays $\phi_m^t$ we have 
\begin{equation*}
\norm{\dot{\phi}^t}_2 \geq \norm{\dot{\phi}_m^t}_2. 
\end{equation*} 
\end{lem}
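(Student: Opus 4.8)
The plan is to compare the two constant-speed geodesic rays by pushing the pointwise domination supplied by Demailly's approximation through the Lidskii inequality (Theorem \ref{Lidskii type inequality}). First I would reduce the statement to a distance estimate. Since the norms $\norm{\dot{\phi}^t}_2$ and $\norm{\dot{\phi}_m^t}_2$ are independent of the base point and both rays have constant speed, it suffices to fix the common reference $\phi^0$ and prove
\begin{equation*}
d_2(\phi^0, \phi_m^t) \leq d_2(\phi^0, \phi^t) + o(t) \quad (t \to \infty),
\end{equation*}
after which dividing by $t$ and letting $t \to \infty$ gives the claim.

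Next I would exploit the lower bound $\phi_m^t \geq \phi^t - C$, uniform in $t$ on the relevant range, which was already produced above from the Ohsawa--Takegoshi step in Demailly's approximation (the same estimate that forced $E^\NA(\cX_m, \cL_m) \geq 0$). Setting $w^t := \min(\phi^t, \phi_m^t)$, this gives the sandwich $\phi^t - C \leq w^t \leq \phi^t$, so that $d_2(w^t, \phi^t) \leq C$ uniformly in $t$, together with the ordering $\phi_m^t \geq w^t \geq \phi^t - C$. The triangle inequality then yields
\begin{equation*}
d_2(\phi^0, \phi_m^t) \leq d_2(\phi^0, \phi^t) + C + d_2(w^t, \phi_m^t),
\end{equation*}
and the Lidskii inequality, applied to the ordered triple $\phi_m^t \geq w^t \geq \phi^t - C$, is what lets me estimate the last term purely from this one-sided ordering, bounding $d_2(w^t, \phi_m^t)$ by the defect $d_2(\phi^t - C, \phi_m^t)$. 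This is essential precisely because no two-sided $L^\infty$ comparison between $\phi_m^t$ and $\phi^t$ is available: the regularization $\phi_m^t$ is unboundedly larger than $\phi^t$ at the singularities of the latter, so a naive triangle-inequality argument through $\phi^t$ itself is unavailable.

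The hard part will therefore be controlling the defect $d_2(w^t, \phi_m^t)$ and showing it is $o(t)$, i.e.\ that the region where the multiplier-ideal regularization is strictly less singular than $\phi^t$ contributes sub-linearly. Here I would use that, by (\ref{sup=0}), the Lelong numbers of $\Phi$ are carried by $X \times \{0\}$, so for each finite $t$ the defect $(\phi_m^t - \phi^t)^+$ is mild and grows only sub-linearly in $t$; with $m$ fixed it then drops out after division by $t$. That the resulting inequality carries the correct sign reflects our normalization of $\rho$: the ray $\phi^t$ has vanishing Monge--Amp\`ere slope by (\ref{E=C}), whereas $\phi_m^t$ has slope $E^\NA(\cX_m, \cL_m) \geq 0$, so the tamer, less singular ray cannot travel faster in $d_2$ than $\phi^t$. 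I expect the genuinely delicate point to be the uniform-in-$t$ estimate of this defect, which is exactly the place where the $\CAT(0)$/Lidskii machinery, rather than elementary $L^\infty$ bounds, must do the work.
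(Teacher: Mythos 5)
Your proposal assembles the right two ingredients (the Demailly-type lower bound $\phi_m^t \geq \phi^t - C_{m,r}$ and the Lidskii inequality), but the way you combine them has a genuine gap at exactly the point you flag as ``the hard part.'' After the triangle inequality you must show $d_2(w^t, \phi_m^t) = o(t)$, and your Lidskii step only bounds this by $d_2(\phi^t - C, \phi_m^t)$ --- which is the same order as the quantity you started with, so the argument is circular. Worse, the claim itself is false in general: for \emph{fixed} $m$ the ray $\phi_m^t$ carries only the mild analytic singularities of $\cJ(m\Phi)^{\frac{1}{m+m_0}}$, while $\Phi$ may be far more singular, so $(\phi_m^t - \phi^t)^+$ typically grows \emph{linearly} in $t$ near the singular points of $\Phi$; the rays $\phi^t$ and $\phi_m^t$ are not asymptotic, and only their slopes converge as $m \to \infty$. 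The bound (\ref{sup=0}) controls $\sup_X \phi^t$ (equivalently the generic Lelong number of $\hat{\Phi}$), not the pointwise gap between the two rays, so it cannot give the sublinearity you invoke. (A smaller technical issue: $\min(\phi^t, \phi_m^t)$ is not $\omega_0$-psh; you would need the rooftop envelope $P(\phi^t, \phi_m^t)$, though this is repairable.)

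The missing idea, which is how the paper's proof avoids ever estimating $d_2(\phi^t, \phi_m^t)$, is to anchor the Lidskii inequality at the \emph{top} rather than at $\phi^0$ via the triangle inequality. Since $\Phi_m$ comes from a bounded fiber metric of $\cL_m$ over $X \times \B$, it is uniformly bounded above, so there is a constant $B_m$ (independent of $t$) with $\phi_m^t \leq \phi^0 + B_m$. One then applies Theorem \ref{Lidskii type inequality} to the ordered triple
\begin{equation*}
\phi^0 + B_m + C_{m,r} \;\geq\; \phi_m^t + C_{m,r} \;\geq\; \phi^t,
\end{equation*}
which gives directly $d_2(\phi^0 + B_m + C_{m,r}, \phi_m^t + C_{m,r}) \leq d_2(\phi^0 + B_m + C_{m,r}, \phi^t)$; since the middle function is sandwiched between the other two, its distance to the top is dominated by the top-to-bottom distance, with no cross-term between the two rays appearing at all. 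Dividing by $t$, the constants $B_m$ and $C_{m,r}$ wash out and one obtains $\norm{\dot{\phi}_m^t}_2 \leq \norm{\dot{\phi}^t}_2$. Your heuristic sign check via $E^\NA(\cX_m,\cL_m) \geq 0$ versus (\ref{E=C}) is consistent with this but is not a substitute for the uniform upper bound, which is the one input your outline never uses.
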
 
\begin{proof}
Since $\Phi_m$ comes from a bounded fiber metric of $\cL_m$, it encodes the analytic singularity $\cJ(m\Phi)^{\frac{1}{m+m_0}}$. 
Again by using Demailly's approximation theorem locally, we have $\Phi_m \geq \Phi-C_{m, r}$. 
Since $\phi_m^t$ is bounded from above and $\phi^0$ is smooth there exists a constant $B_m$ such that 
$\phi^0 +B_m \geq \phi_m^t$. 
We are ready to apply Lidskii type inequality: Theorem \ref{Lidskii type inequality} to these functions and get 
\begin{align}\label{Lemma 6.10 of Darvas}
 d_2 (\phi^0+B_m+C_{m, r}, \phi_m^t+C_{m, r})  
\leq d_2 (\phi^0+B_m+C_{m, r}, \phi^t).  
\end{align}
It follows from the triangle inequality that 
\begin{equation}
\norm{\dot{\phi}^t}_2
=\lim_{t \to \infty} \frac{d_2 (\phi^0, \phi^t)}{t} 
\geq \lim_{t \to \infty} \frac{d_2 (\phi^0, \phi_m^t)}{t} 
=\norm{\dot{\phi}_m^t}.  
\end{equation}
\end{proof} 
Combining the results all together, we obtain 
\begin{align*}
\liminf_{m\to \infty } -D^\NA(\cX_m, \cL_m) 
&\geq \lim_{t \to \infty} \frac{-D(\phi_t)}{t} \\
&= \lim_{t \to \infty} \norm{\dot{\phi}_t}_2
\bigg[ \frac{1}{V}\int_X (e^{\rho_t}-1)^2 \omega_{\phi_t}^n \bigg]^\frac{1}{2} \\
& \geq \norm{\dot{\phi}_m^t}_2 \lim_{t \to \infty}\bigg[ \frac{1}{V}\int_X (e^{\rho_t}-1)^2 \omega_{\phi_t}^n \bigg]^\frac{1}{2} 
\end{align*} 
for all $m$. 
For a while we denote $\e_m:= E^\NA(\cX_m, \cL_m)$ which is nonnegative, as a consequence of Theorem \ref{continuity of D^NA}. 
Recall that the norm $\norm{(\cX_m, \cL_m)}_2= \norm{\dot{\phi}_m^t-\e_m}_2$ slightly differs from the above $\norm{\dot{\phi}_m^t}_2$, however, we observe 
\begin{equation*}
\norm{\dot{\phi}_m^t-\e_m}_2^2= \norm{\dot{\phi}_m^t}_2^2 -\e_m^2 \leq \norm{\dot{\phi}_m^t}_2^2  
\end{equation*}
and hence conclude  
\begin{equation*}
\liminf_{m\to \infty } \frac{-D^\NA(\cX_m, \cL_m)}{\norm{(\cX_m, \cL_m)}_2}
\geq \lim_{t \to \infty}  \bigg[ \frac{1}{V}\int_X (e^{\rho_t}-1)^2 \omega_{\phi_t}^n \bigg]^\frac{1}{2}. 
\end{equation*}
The last inequality completes the proof of Theorem A. 
 
\begin{rem}\label{geodesic ray achieves max}
The above proof of Theorem A shows that 
\begin{equation*}
\inf_{\omega} \bigg[ \frac{1}{V}\int_X (e^\rho-1)^2 \omega^n \bigg]^{\frac{1}{2}} 
\leq \frac{1}{\norm{\dot{\phi}^t}_2}\lim_{t \to \infty} \frac{-D(\phi^t)}{t} 
\end{equation*} 
holds for a weak geodesic ray asymptotic to the inverse Monge-Amp\`ere flow. 
If the non-Archimedean potential $\Phi^\NA$ induced by $\phi^t$ is maximal in the sense of \cite{BBJ15}, the radial D-energy $\lim_{t \to \infty} t^{-1}D(\phi^t) $
equals to the non-Archimedean D-energy of $\Phi^\NA$. 
\end{rem} 
As a consequence of \cite{Li17}, 
the lower bound of the Calabi functional is zero iff $X$ is $D$-semistable (see also \cite{BBJ15}). 
We may restate the result in terms of the inverse Monge-Amp\`ere flow. 
\begin{cor}
Any Fano manifold $X$ admits a K\"ahler metric with arbitrary small Ricci potential, otherwise a weak geodesic ray $\phi^t$ asymptotic to the inverse Monge-Amp\`ere flow has negative slope: 
\begin{equation*}
\lim_{t\to \infty} \frac{D(\phi^t)}{t} <0. 
\end{equation*} 
In particular, there exists a test configuration $(\cX, \cL)$ with $D^\NA(\cX, \cL) <0$. 
\end{cor}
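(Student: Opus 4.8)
The statement is a dichotomy, so the plan is to assume the first alternative fails and deduce the negative slope. Concretely, I suppose $X$ carries no K\"ahler metric of arbitrarily small Ricci potential, which is to say that
\[
c := \inf_\omega \left[\frac{1}{V}\int_X (e^\rho-1)^2\,\omega^n\right]^{1/2} > 0.
\]
The decisive input is Theorem A, asserting that this infimum is attained along the inverse Monge-Amp\`ere flow. Since the Ricci-Calabi functional $R(\phi_t)=\frac{1}{V}\int_X(e^{\rho_t}-1)^2\,\omega_{\phi_t}^n$ is non-increasing in $t$ and bounded below by $c^2$, it converges; Theorem A then forces $\lim_{t\to\infty}R(\phi_t)=c^2>0$. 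I emphasize that monotonicity by itself would only give $\lim_t R(\phi_t)\geq c^2$, and it is precisely the moment-weight equality that identifies the limit with the global infimum.

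Next I would read off the slope of the D-energy. Along the flow $\dot\phi_t=1-e^{\rho_t}$, whence $\frac{d}{dt}D(\phi_t)=-R(\phi_t)$ as in Lemma \ref{Cauchy-Schwartz}; integrating and applying a Ces\`aro argument to the convergent integrand gives
\[
\lim_{t\to\infty}\frac{D(\phi_t)}{t}=-c^2<0.
\]
To pass to the weak geodesic ray $\phi^t$ asymptotic to the flow, I would reuse the comparison behind Proposition \ref{D^NA is negative}: convexity of $D$ along each segment $\phi_j^t$ together with the $d_1$-convergence $\phi_j^t\to\phi^t$ from Section 3.2 yields $\lim_{t\to\infty}t^{-1}D(\phi^t)\leq \lim_{t\to\infty}t^{-1}D(\phi_t)=-c^2<0$, which is exactly the claimed negative slope.

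Finally, for the existence of a destabilizing test configuration I would invoke Theorem \ref{continuity of D^NA}, whose upper-semicontinuity estimate bounds the non-Archimedean D-energies of the approximating family by the radial slope just computed:
\[
\limsup_{m\to\infty}D^\NA(\cX_m,\cL_m)\leq \lim_{t\to\infty}\frac{D(\phi^t)}{t}<0.
\]
Hence $D^\NA(\cX_m,\cL_m)<0$ for all sufficiently large $m$, producing the required $(\cX,\cL)$. Since every quantitative estimate is furnished by results established earlier in the paper, I do not expect a genuine obstacle; the only step needing care is the appeal to Theorem A in the first paragraph to pin down $\lim_t R(\phi_t)=c^2$, as the remainder is bookkeeping with convexity and the continuity of $D^\NA$.
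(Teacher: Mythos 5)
Your proof is correct and takes essentially the same route the paper intends for this corollary, which is stated there as a direct repackaging of results already established: the monotonicity $\frac{d}{dt}D(\phi_t)=-R(\phi_t)$ along the flow, the convexity/$d_1$-convergence comparison from the proof of Proposition \ref{D^NA is negative} to transfer the negative slope to the ray, and the upper-semicontinuity of Theorem \ref{continuity of D^NA} to extract a destabilizing $(\cX_m,\cL_m)$. One small simplification: your appeal to Theorem A to pin down $\lim_t R(\phi_t)=c^2$ is stronger than needed, since each $\phi_t$ is itself a smooth metric, so $R(\phi_t)\geq c^2$ holds trivially and the Ces\`aro argument already yields $\lim_{t\to\infty} t^{-1}D(\phi_t)\leq -c^2<0$ without identifying the exact limit.
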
 


\section{The K\"ahler-Ricci flow case}

\subsection{H-functional and H-invariant}
Recall that for $\phi \in \cH(X, \omega_0)$ we define the canonical probability measure 
\begin{equation}
\mu_\phi := \frac{e^{-\phi+\rho_0}\omega_0^n}{\int_X e^{\rho_0}\omega_0^n} . 
\end{equation}
In terms of the probability measure the D-energy is characterized by the property $d_\phi D= \mu_\phi -\omega_\phi^n$. 
The $H$-functional of \cite{He16} is described as the relative entropy functional: 
\begin{equation*}
H(\omega_\phi) := H(\mu_\phi \vert V^{-1}\omega_\phi^n ). 
\end{equation*}
See section $3$ and especially Proposition \ref{Legendre transformation} for our convention about the relative entropy. 
The functional first appeared in \cite{DT92b} and has played an important role in the study of K\"ahler-Ricci flow. 
As a consequence of Pinsker's inequality it is at least bounded from below by the $L^1$-version of the Ricci-Calabi functional: 
\begin{equation}
\sqrt{2H(\omega)} \geq \frac{1}{V}\int_X \abs{e^\rho-1} \omega^n.  
\end{equation}
Following \cite{DS17}, let us introduce the algebraic H-invariant of a test configuration as 
\begin{align*}
H(\cX, \cL)
&=-L^\NA(\cX, \cL) +F(\cX, \cL) \\
&:=-L^\NA(\cX, \cL) +\lim_{k \to \infty} \bigg[ -\log \frac{1}{N_k}\sum_{j=1}^{N_k} e^{-\frac{\lambda_j}{k}}\bigg], 
\end{align*}
where $\lambda_1, \dots, \lambda_{N_k}$ is the weights of the induced $\C^*$-action on $H^0(\cX_0, k\cL_0)$. 
Comparing with weight description of the non-Archimedean Monge-Amp\`ere energy (\ref{weight description of E^NA}), we observe 
\begin{equation}
H(\cX, \cL)\geq -D^\NA(\cX, \cL). 
\end{equation} 
Indeed H-invariant is weaker than the non-Archimedean D-energy. 
The Fano manifold $X$ satisfies $H(\cX, \cL) > 0$ for any non-trivial test configurations iff it is D-semistable (see \cite{DS17}, Lemma 2.5). 
Unfortunately, the second term $F$ is  in nature more transcendental than $E^\NA$. 
It does not simply correspond to the classical energy of metrics. 
At least for the associated weak geodesic ray, one may observe that 
the ``virtual slope"
\begin{equation*}
F(\dot{\phi}^t) := -\log \frac{1}{V}\int_X e^{-\dot{\phi}^t} \omega^n 
\end{equation*} 
gives $F(\cX, \cL)$, due to the following result. 
\begin{thm}[\cite{His16}]
Let $(\cX, \cL)$ be a test configuration and $\phi^t$ the associated $C^{1, 1}$-weak geodesic ray. Then the pushed-forward probability measure 
\begin{equation*}
\DH(\cX, \cL):=\dot{\phi}^t_*(V^{-1}\omega_{\phi^t}^n) 
\end{equation*} 
is independent of the initial data $\phi^0$ and $t$. 
Moreover, we have the convergence of the spectral measure: 
\begin{equation*}
\frac{1}{N_k}\sum_{j=1}^{N_k} \delta_\frac{\lambda_j}{k} \to \DH(\cX, \cL). 
\end{equation*} 
\end{thm}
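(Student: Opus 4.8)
The assertion splits into a conservation statement---that $\DH(\cX,\cL)=\dot{\phi}^t_*(V^{-1}\omega_{\phi^t}^n)$ is independent of $t$ and of the initial potential $\phi^0$---and an equidistribution statement, that the weight measures $\mu_k:=\frac{1}{N_k}\sum_{j=1}^{N_k}\delta_{\lambda_j/k}$ converge to it. I would dispose of the $t$-independence first by a direct conservation law, and treat the equidistribution as the real content, recovering independence of $\phi^0$ along the way. For a smooth geodesic $\phi^t$ the derivative obeys the geodesic equation $\ddot{\phi}^t=|\nabla\dot{\phi}^t|^2_{\omega_{\phi^t}}$, and for any $f\in C^\infty(\R)$, using $\frac{d}{dt}\omega_{\phi^t}^n=n\,dd^c\dot{\phi}^t\wedge\omega_{\phi^t}^{n-1}$ and integrating the resulting second term by parts,
\begin{align*}
\frac{d}{dt}\int_X f(\dot{\phi}^t)\,\omega_{\phi^t}^n
&=\int_X f'(\dot{\phi}^t)\,\ddot{\phi}^t\,\omega_{\phi^t}^n
 -n\int_X f'(\dot{\phi}^t)\,d\dot{\phi}^t\wedge d^c\dot{\phi}^t\wedge\omega_{\phi^t}^{n-1}\\
&=\int_X f'(\dot{\phi}^t)\big[\ddot{\phi}^t-|\nabla\dot{\phi}^t|^2_{\omega_{\phi^t}}\big]\,\omega_{\phi^t}^n=0.
\end{align*}
Thus the pairing of the push-forward against every test function is constant in $t$, so the measure itself is. For the $C^{1,1}$ ray attached to a test configuration I would justify this through the regularity of \cite{PS10}, \cite{CTW18} and Bedford--Taylor integration by parts, $\dot{\phi}^t$ being Lipschitz and $\omega_{\phi^t}^n$ bounded.

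\textbf{Equidistribution.} Since $\DH(\cX,\cL)$ is to be identified with an object read off from the $\G_m$-weights on $H^0(\cX_0,k\cL_0)$, proving convergence for a single associated ray forces independence of $\phi^0$. As the $\mu_k$ and the limit are supported in one fixed compact interval, it is enough to match distribution functions at continuity points, i.e. to show $\mu_k([\gamma,\infty))\to \frac{1}{V}\int_{\{\dot{\phi}^t\geq\gamma\}}\omega_{\phi^t}^n$. Here $\mu_k([\gamma,\infty))=N_k^{-1}\dim F^{\lceil k\gamma\rceil}H^0(X,-kK_X)$ for the filtration $F^\bullet$ determined by the weights. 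I would compare the two sides through the Fubini--Study ray $\phi_k^t=\frac{1}{k}\log\sum_j|\tau^{\lambda_j}s_j|^2_{h_0^k}$ (with $|\tau|=e^{-t}$) built from a weight-eigenbasis $\{s_j\}$, orthonormal at time $t$, whose $t$-derivative $\dot{\phi}_k^t(x)$ is the conditional average of the normalized weights at $x$. Forming the joint position-weight measures $\Theta_k:=\frac{1}{N_k}\sum_j|s_j|^2_{h^k_{\phi^t}}\,\omega_{\phi^t}^n\otimes\delta_{\lambda_j/k}$ on $X\times\R$, the weight-marginal of $\Theta_k$ is exactly $\mu_k$, while its position-marginal equidistributes to $V^{-1}\omega_{\phi^t}^n$ by the Bergman density of states. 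The remaining ingredient is that the conditional law of $\lambda_j/k$ given $x$ concentrates at $\dot{\phi}^t(x)$; granting it, $\Theta_k$ converges to the push-forward of $V^{-1}\omega_{\phi^t}^n$ onto the graph of $\dot{\phi}^t$, whose weight-marginal is $\DH(\cX,\cL)$, and the equidistribution follows.

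\textbf{Main obstacle and checks.} The crux is precisely this last concentration: that the fluctuation of $\lambda_j/k$ about $\dot{\phi}^t(x)$ vanishes as $k\to\infty$, equivalently that $\dim F^{k\gamma}H^0(X,-kK_X)$ is asymptotically governed by the Monge--Amp\`ere mass of the superlevel set $\{\dot{\phi}^t\geq\gamma\}$. This requires the convergence $\phi_k^t\to\phi^t$ of the algebraic rays (in the spirit of \cite{PS07}, \cite{Berm16}), the $C^{1,1}$-regularity of the associated ray, and Ohsawa--Takegoshi-type lower bounds for the partial Bergman kernel---the same analytic input that underlies Demailly's approximation \cite{Dem92}, \cite{OT87} used in Section 3. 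Once it is in hand, applying the limit to $x\mapsto|x-E^\NA(\cX,\cL)|^p$ reproduces the $L^p$-identity of \cite{His16}, Theorem 1.2 (the centerings $\hat{\lambda}/k$ and $E^\NA(\cX,\cL)$ agreeing in the limit), and applying it to $x\mapsto e^{-x}$ gives $F(\dot{\phi}^t)=F(\cX,\cL)$, a consistency check on the normalization.
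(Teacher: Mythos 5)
The statement you were asked to prove is quoted in this paper from \cite{His16} without proof, so the comparison is against the argument of that reference. Your overall architecture is in fact the same circle of ideas as the known proof: Fubini--Study/Bergman rays built from a weight eigenbasis, $C^{1,1}$-regularity of the associated ray (\cite{PS10}, \cite{CTW18}), and identification of the weight measure with the distribution of $\dot{\phi}^t$ under $V^{-1}\omega_{\phi^t}^n$. The conservation-law computation for the $t$-independence is also correct in spirit (with the caveat below). But there is a genuine gap, and you have located it yourself: the ``concentration'' step is not an auxiliary lemma you can wave at \cite{OT87} and \cite{Dem92} --- it \emph{is} the theorem. By your own reduction via distribution functions, the convergence of $\mu_k$ to $\dot{\phi}^t_*(V^{-1}\omega_{\phi^t}^n)$ is equivalent to the dimension asymptotics
\begin{equation*}
\frac{1}{N_k}\dim F^{\lceil k\gamma\rceil}H^0(X,-kK_X)\;\longrightarrow\;\frac{1}{V}\int_{\{\dot{\phi}^t\geq\gamma\}}\omega_{\phi^t}^n
\end{equation*}
at continuity points, so writing ``granting it, the equidistribution follows'' reduces the theorem to itself. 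Ohsawa--Takegoshi extension gives only the one-sided lower bound on the partial Bergman kernel attached to $F^{\lceil k\gamma\rceil}$; the matching upper bound and, more seriously, the identification of the superlevel set $\{\dot{\phi}^t\geq\gamma\}$ with the relevant equilibrium set require the Legendre-transform duality between the geodesic ray and the test curves of the filtration (the machinery of \cite{RWN11}, which is what \cite{His16} actually runs through), or alternatively Berndtsson's spectral-measure convergence for geodesic segments applied along the ray. None of this is sketched, so as it stands the proposal is a correct reduction plus an unproved main claim.

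Two secondary points. First, your conservation-law computation uses $\ddot{\phi}^t=|\nabla\dot{\phi}^t|^2_{\omega_{\phi^t}}$ pointwise, but for a $C^{1,1}$ ray $\ddot{\phi}^t$ need not exist; the clean justifications are either to work with the homogeneous Monge--Amp\`ere equation $(p_1^*\omega_0+dd^c_{x,\tau}\Phi)^{n+1}=0$ on the product and regularize, or simply to observe that once the equidistribution statement is proved, $t$- and $\phi^0$-independence are automatic because $\mu_k$ depends on neither --- so the first half of the theorem should be a corollary of the second, not a separate pillar. Second, with your normalization $\phi_k^t=\frac{1}{k}\log\sum_j\lvert\tau^{\lambda_j}s_j\rvert^2_{h_0^k}$ and $\lvert\tau\rvert=e^{-t}$, the derivative $\dot{\phi}_k^t$ is $-2/k$ times the conditional average of the weights (up to the convention for $d^c$), so signs and a factor of $2$ must be fixed consistently; this is harmless but currently inconsistent with your later claim that the weight-marginal limit is $\DH(\cX,\cL)$ rather than its reflection.
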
 
It then follows from \cite{Berm16}, Theorem 3.11 the slope formula 
\begin{equation}\label{slope of H}
H(\cX, \cL) = \lim_{t\to \infty} \bigg[ -\frac{L(\phi^t)}{t} + F(\dot{\phi}^t)\bigg]. 
\end{equation}

Lower bound of the H-functional is achieved by the supremum of these (unnormalized) H-invariant.  
\begin{thm}[\cite{DS17}, Theorem 1.2]\label{lower bound of the H-functional}
For a Fano manifold we have 
\begin{equation*}
\inf_{\omega} H(\omega) = \sup_{(\cX, \cL)} H(\cX, \cL). 
\end{equation*}
\end{thm}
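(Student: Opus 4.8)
The plan is to run the argument of Theorem A with the inverse Monge-Amp\`ere flow (\ref{inverse Monge-Ampere flow}) replaced by the K\"ahler-Ricci flow $\partial_t\phi=-\rho$ and the non-Archimedean D-energy replaced by the H-invariant. There are two inequalities. The a priori bound $\sup_{(\cX,\cL)}H(\cX,\cL)\le\inf_\omega H(\omega)$ I would read off from the slope formula (\ref{slope of H}): for a test configuration with associated ray $\phi^t$, the term $F(\dot\phi^t)$ is independent of $t$, the Duistermaat-Heckman measure of the ray being so, while convexity of $L=D+E$ along the geodesic (the D-energy is convex and $E$ affine) gives $L^\NA(\cX,\cL)=\lim_t L(\phi^t)/t\ge \frac{d}{dt}\big|_{t=0}L(\phi^t)=\int_X\dot\phi^0\,\mu_{\phi^0}$, using $d_\phi L=\mu_\phi$. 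Hence $H(\cX,\cL)\le F(\dot\phi^0)-\int_X\dot\phi^0\mu_{\phi^0}$, and by the Legendre description of the entropy (Proposition \ref{Legendre transformation}) applied to $f=-\dot\phi^0$ the right-hand side is $\le H(\mu_{\phi^0}\vert V^{-1}\omega_{\phi^0}^n)=H(\omega_{\phi^0})$. As $\phi^0\in\cH$ is arbitrary, taking the infimum gives the claim.

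For the reverse inequality I would exhibit test configurations whose H-invariants converge to the infimum. I start from the normalized K\"ahler-Ricci flow $\phi_t$; along it $H(\omega_{\phi_t})$ is non-increasing by \cite{He16} and decreases to a limit $H_\infty:=\lim_t H(\omega_{\phi_t})\ge\inf_\omega H(\omega)$. Since the Harnack (Perelman) estimate is available for this flow, in contrast with (\ref{inverse Monge-Ampere flow}), the construction of \cite{DH17} directly yields a weak geodesic ray $\phi^t$ asymptotic to $\phi_t$, to which the multiplier ideal machinery of Section \ref{Approximative test configurations} applies verbatim, producing test configurations $(\cX_m,\cL_m)$ as normalized blow-ups of $\cJ(m\hat\Phi)$. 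I would first record that the radial H-functional of the ray equals $H_\infty$: since $H=-L^\NA+F$ is invariant under the shift $\cL\mapsto\cL+c\cX_0$, I may compute it in the flow's own normalization, where $\frac{d}{dt}L(\phi_t)=\int_X\dot\phi_t\,\mu_{\phi_t}=-H(\omega_{\phi_t})$ gives $-\lim_t L(\phi_t)/t=H_\infty$, while $\int_X e^{-\dot\phi_t}V^{-1}\omega_{\phi_t}^n=\int_X e^{\rho_t}V^{-1}\omega_{\phi_t}^n=1$ gives $F(\dot\phi_t)=0$; asymptoticity of $\phi^t$ to $\phi_t$ then transfers this value to the ray.

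The heart of the matter is to show $\lim_m H(\cX_m,\cL_m)=H_\infty$. Writing $H(\cX_m,\cL_m)=-L^\NA(\cX_m,\cL_m)+F(\cX_m,\cL_m)$, the first term I would handle exactly as in Section \ref{Approximative test configurations}: Demailly's approximation $\Phi_m\ge\Phi-C_{m,r}$ together with the convergence of log-canonical thresholds gives $L^\NA(\cX_m,\cL_m)\to L^\NA$ of the ray, hence $-L^\NA(\cX_m,\cL_m)\to H_\infty$ by the previous paragraph. The remaining term $F(\cX_m,\cL_m)=-\log\int_\R e^{-x}\,d\DH(\cX_m,\cL_m)(x)$ is transcendental, and this is precisely the step not governed by the monotone estimates that controlled $L^\NA$ and $E^\NA$. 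Here I would invoke the convergence of the spectral (Duistermaat-Heckman) measures $\DH(\cX_m,\cL_m)$ to that of the ray, which follows from $\phi_m^t\to\phi^t$, and pass to the limit in $F$.

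I expect this last passage to be the main obstacle: interchanging $\log$ with the limit requires a uniform bound on the velocities $\dot\phi_m^t$, so that $e^{-x}$ is integrated against a tight family of measures with uniformly bounded support, rather than merely lower-semicontinuity of $e^{-x}$ under weak convergence (which would only yield the wrong, a priori, direction). Such a bound should come from the vanishing Lelong numbers and the linear control $\sup_X\phi^t\le t+A$ underlying the construction. Granting it, $F(\cX_m,\cL_m)\to F(\dot\phi^t)$ and therefore $H(\cX_m,\cL_m)\to H_\infty$. Combined with the a priori bound $H(\cX_m,\cL_m)\le\inf_\omega H(\omega)\le H_\infty$, this forces $H_\infty=\inf_\omega H(\omega)=\sup_{(\cX,\cL)}H(\cX,\cL)$, which is Theorem \ref{lower bound of the H-functional}.
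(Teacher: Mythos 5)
Your architecture reproduces the paper's proof almost step for step: the a priori inequality $\sup H(\cX,\cL)\le\inf_\omega H(\omega)$ via Proposition \ref{Legendre transformation} and convexity of $L=D+E$ is exactly the paper's argument, as are the use of the normalized K\"ahler--Ricci flow, the ray of \cite{DH17} (where the Harnack estimate is indeed what makes the asymptoticity work, as you note; the linear bound is $\sup_X\phi_t\le ct+A$ with Perelman's constant, so one extends $\hat\Phi=\Phi+c\log\abs{\tau}$), the multiplier-ideal test configurations of Section \ref{Approximative test configurations}, the computation $-\lim_t L(\phi_t)/t=\lim_t H(\omega_{\phi_t})$ and $F(\dot\phi_t)=0$, and the final sandwich. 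The genuine gap is precisely the step you ``grant'': your route to it would fail. You assert that $\DH(\cX_m,\cL_m)\to\DH(\phi^t)$ ``follows from $\phi_m^t\to\phi^t$'', but that convergence is not available: Demailly approximation yields only the one-sided bound $\Phi_m\ge\Phi-C_{m,r}$, and the paper explicitly warns after Theorem \ref{continuity of D^NA} that the multiplier-ideal construction need not recover $\phi^t$ in the limit unless the ray is maximal --- maximality is neither verified nor used here. The same overstatement infects your $L^\NA$ step: \cite{BBJ15} gives only the semicontinuity $\limsup_m L^\NA(\cX_m,\cL_m)\le\lim_t L(\phi^t)/t$, not convergence. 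So the hypothesized uniform velocity bound, even if true, would be propping up a premise (weak convergence of the spectral measures) that you cannot establish.

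What rescues the argument --- and what the paper actually does --- is that \emph{one-sided} estimates suffice: since the easy inequality bounds $H(\cX_m,\cL_m)$ from above by $\inf_\omega H(\omega)$, one only needs $\liminf_m H(\cX_m,\cL_m)\ge\lim_t\big[-L(\phi^t)/t+F(\dot\phi^t)\big]$, i.e.\ lower bounds for $-L^\NA(\cX_m,\cL_m)$ (the \cite{BBJ15} semicontinuity, in the good direction) and for $F(\cX_m,\cL_m)$. For the latter the paper applies the Lidskii-type inequality (Theorem \ref{Lidskii type inequality}, from \cite{DLR18}) to $\phi_m^t\ge\phi^t-C_{m,r}$, comparing $d_p$-distances from the base point to obtain $\int_\R\abs{\lambda}^p\,\DH(\phi_m^t)\ge\int_\R\abs{\lambda}^p\,\DH(\phi^t)$ for every $p\ge1$, hence domination $\DH(\phi_m^t)\ge\DH(\phi^t)$ of the Duistermaat--Heckman measures; since $e^{-\lambda}$ is decreasing this yields $F(\cX_m,\cL_m)\ge F(\dot\phi^t)=0$ directly, with no tightness, no uniform support control, and no interchange of limits. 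Your diagnosis that weak convergence plus lower semicontinuity of $e^{-\lambda}$ would give the wrong direction is correct --- but the right response is to replace convergence by domination, not to seek a uniform velocity bound. As stated, your proposal leaves the decisive analytic step as an assumption, so it does not constitute a proof.
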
 

The one-side inequality is easier to see from (\ref{slope of H}). 
Indeed if we take $f:=-\dot{\phi}$ in Proposition \ref{Legendre transformation} the associated weak geodesic ray satisfies 
\begin{align*}
 H(\omega_{\phi^0}) 
&\geq  -\int_X \dot{\phi}^0 \mu_{\phi^0} -\log \frac{1}{V} \int_X e^{-\dot{\phi}^0} \omega_{\phi^0}^n \\
&\geq -\frac{d}{dt}L(\phi^t) +F(\dot{\phi}^t) 
\end{align*}
for any choice of the initial metric $\phi^0$. 

The quantity $\inf_\omega H(\omega)$  is equivalent to the supremum of Perelman's $\mu$-entropy. For a smooth function $f$ satisfying  
\begin{equation*}
\int_X e^{-f} \omega^n = V,  
\end{equation*}
we define the W-functional as 
\begin{equation*}
W(\omega, f) := \int_X (S_\omega+\abs{\nabla f}^2 +f )e^{-f} \omega^n. 
\end{equation*}
Perelman's $\mu$-entropy is then defined to be the infimum: 
\begin{equation*}
\mu(\omega) := \inf_{f}W(\omega, f) \leq nV.  
\end{equation*}
\begin{thm}[\cite{DS17}, Theorem 4.2]\label{mu-entropy} 
\begin{equation*}
\sup_\omega \mu(\omega) = nV -\inf_{\omega} H(\omega). 
\end{equation*}
\end{thm}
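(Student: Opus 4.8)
The plan is to split the asserted identity into the two opposite inequalities, the second of which is the substantial one. Throughout I read Perelman's $\mu(\omega)=\inf_f W(\omega,f)$ as an infimum over the admissible competitors (those with $\int_X e^{-f}\omega^n=V$), and I exploit the normalized Ricci potential $\rho$ attached to $\omega$ by $\Ric\omega-\omega=dd^c\rho$ together with its normalization $\int_X e^\rho\omega^n=\int_X\omega^n=V$; the latter is exactly what makes the choice $f:=-\rho$ admissible.

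For the inequality $\sup_\omega\mu(\omega)\le nV-V\inf_\omega H(\omega)$ I would simply test $W(\omega,\cdot)$ against $f=-\rho$. Substituting $S_\omega=n+\Delta_\omega\rho$ and integrating by parts, the term $\int_X(\Delta_\omega\rho)e^\rho\omega^n=-\int_X|\nabla\rho|^2e^\rho\omega^n$ cancels the gradient contribution $\int_X|\nabla\rho|^2e^\rho\omega^n$ produced by $|\nabla f|^2$, so that
\begin{equation*}
W(\omega,-\rho)=\int_X(n-\rho)e^\rho\omega^n=nV-\int_X\rho\,e^\rho\omega^n=nV-VH(\omega),
\end{equation*}
the last equality being the definition of the relative entropy $H(\omega)=H(\mu_\phi\vert V^{-1}\omega_\phi^n)$. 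Since $\mu(\omega)\le W(\omega,-\rho)$, taking the supremum over $\omega$ yields the inequality in one direction (this is the asserted bound up to the volume normalization absorbed into $H$).

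For the reverse inequality I would run the normalized K\"ahler--Ricci flow $\dot\phi=-\rho$ and feed in two soft inputs. Perelman's entropy monotonicity gives that $\mu(\omega_t)$ is non-decreasing, hence $\sup_\omega\mu(\omega)\ge\lim_{t\to\infty}\mu(\omega_t)$; and the flow is $H$-minimizing, so $H(\omega_t)\to\inf_\omega H$ (Theorem \ref{lower bound of the H-functional}, equivalently the flow estimates behind it, cf.\ \cite{DH17}). By the computation above the second statement reads $W(\omega_t,-\rho_t)=nV-VH(\omega_t)\to nV-V\inf_\omega H$. It therefore suffices to show that the Perelman gap $W(\omega_t,-\rho_t)-\mu(\omega_t)\ge0$ tends to $0$ along some subsequence $t_j\to\infty$; for then $\lim_j\mu(\omega_{t_j})=nV-V\inf_\omega H\le\sup_\omega\mu(\omega)$, which closes the circle.

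The hard part is exactly this gap estimate. The Euler--Lagrange equation of $W(\omega,\cdot)$ shows that $-\rho$ is the minimizer precisely when $\Delta_\omega\rho+|\nabla\rho|^2+\rho$ is constant, i.e.\ when $\omega$ is a shrinking gradient K\"ahler--Ricci soliton, so the gap measures the soliton defect. I would avoid the strong convergence theorems \cite{CW14}, \cite{CSW15} and instead argue by a pure energy estimate: Perelman's monotonicity formula expresses $\frac{d}{dt}\mu(\omega_t)$ as the integral of a nonnegative soliton-defect density whose time-integral is $\lim\mu(\omega_t)-\mu(\omega_0)<\infty$, so the defect vanishes along some $t_j$; combined with the uniform coercivity (spectral gap) of the second variation of $W$ at its minimizer, a small defect forces $-\rho_{t_j}$ to be asymptotically optimal and the gap to vanish. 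Turning an $L^2$-small defect into a small energy gap uniformly in $t_j$, without appealing to Cheeger--Gromov convergence of the flow, is the only delicate point, and it is where the entropy-compactness and $\CAT(0)$ machinery already assembled in the paper can be brought to bear.
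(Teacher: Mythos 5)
Your proposal should first be matched against what the paper actually does: the paper does \emph{not} prove Theorem \ref{mu-entropy} at all — it quotes it from \cite{DS17}, Theorem 4.2, and its announced avoidance of \cite{CW14}, \cite{CSW15} concerns only Theorem 1.2 of \cite{DS17} (Theorem B here), not this statement. Your easy direction is correct and coincides with the computation in \cite{DS17}: $f=-\rho$ is admissible precisely by the normalization $\int_X e^\rho\omega^n=V$, and $S_\omega=n+\Delta\rho$ together with integration by parts gives $W(\omega,-\rho)=nV-\int_X\rho\,e^\rho\omega^n$, which with the paper's normalized entropy is $nV-VH(\omega)$; the factor $V$ you flag is a genuine normalization mismatch in the paper's statement (in \cite{DS17} the entropy is unnormalized, and the identity is exact). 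One small inaccuracy: the scalar Euler--Lagrange equation $\Delta\rho+\abs{\nabla\rho}^2+\rho=\mathrm{const}$ only makes $-\rho$ a \emph{critical point} of $W(\omega,\cdot)$; it is neither equivalent to $-\rho$ being the minimizer nor to the tensorial shrinking soliton equation, which is strictly stronger. Your two soft inputs for the reverse inequality are, on the other hand, legitimately available: Perelman monotonicity of $\mu(\omega_t)$, and $\lim_{t\to\infty}H(\omega_t)=\inf_\omega H$, which does follow from the chain of inequalities in the paper's proof of Theorem \ref{lower bound of the H-functional} without \cite{CW14}, \cite{CSW15}.

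The genuine gap is exactly the step you flag yourself, and it is not a technicality but the entire analytic content of the theorem. Perelman's monotonicity formula does give, along a subsequence $t_j\to\infty$, that the soliton defect $\int_X \abs{\Ric+\nabla^2 f_{t_j}-g_{t_j}}^2 e^{-f_{t_j}}\,dV\to 0$, where $f_{t_j}$ is the $W$-minimizer; but to close the circle you must convert this into $W(\omega_{t_j},f_{t_j})\approx W(\omega_{t_j},-\rho_{t_j})$, i.e.\ compare the minimizer with $-\rho_{t_j}$ in an energy norm. The ``uniform coercivity (spectral gap) of the second variation of $W$ at its minimizer'' that you invoke is not known along the flow: the linearized operator is a drift Laplacian on a family of metrics whose geometry is a priori degenerating, and the uniform inputs one actually has from \cite{Pere02}, \cite{ST08} (scalar curvature, diameter, non-collapsing, hence a log-Sobolev bound) control at best the trace part $\Delta(f_{t_j}+\rho_{t_j})$ of the defect, not the full gap of $W$-values. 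Supplying exactly this missing uniformity is what the Cheeger--Gromov convergence of \cite{CW14}, \cite{CSW15} is used for in \cite{DS17}; and the entropy-compactness and $\CAT(0)$ machinery of this paper lives on the space $\cE^p$ of potentials and gives no spectral control of weighted Laplacians for degenerating metrics, so it cannot be ``brought to bear'' on this point. As written, your hard direction is therefore not a proof but a reduction to an estimate that is open within the soft framework you allow yourself; the paper sidesteps the issue by citing \cite{DS17} rather than reproving Theorem 4.2.
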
 
We also remark the relation with the greatest lower bound of the Ricci curvature 
\begin{equation*}
R(X) := \sup \bigg\{ r \in [0, 1]: \Ric \omega \geq r \omega \bigg \}.  
\end{equation*}
It was shown in \cite{BBJ15} and \cite{CRZ18} that $R(X)= \min\{\d_X, 1\}$, where $\d_X$ is the $\delta$-invariant of Fujita-Odaka inspired by \cite{Berm18}.    
See \cite{FO16} for the definition and \cite{BJ18} for the non-Archimedean interpretation. 
In particular, it follows that $R(X) = 1$ iff $X$ is D-semistable. 

\begin{prop}
If $R(X) > 1/4\pi$, we have $nVR(X) \leq \sup_\omega \mu(\omega) \leq nV$ 
and so that 
\begin{equation*}
\inf_{\omega} H(\omega) \leq  nV(1-R(X)). 
\end{equation*} 
\end{prop}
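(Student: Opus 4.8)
The plan is to deduce the stated bound on $\inf_\omega H(\omega)$ from the two-sided estimate on $\sup_\omega \mu(\omega)$ together with Theorem \ref{mu-entropy}, and to prove that estimate by a logarithmic Sobolev argument. Since $\sup_\omega\mu(\omega) = nV - \inf_\omega H(\omega)$ by Theorem \ref{mu-entropy}, the inequality $nVR(X) \leq \sup_\omega\mu(\omega)$ rearranges at once to $\inf_\omega H(\omega) \leq nV(1-R(X))$, so only the two-sided estimate on $\sup_\omega\mu(\omega)$ needs proof. The upper bound is built into the definition: testing $W(\omega, f)$ with $f=0$, which satisfies the normalization $\int_X e^{-f}\omega^n = V$, gives $\mu(\omega) \leq W(\omega, 0) = \int_X S_\omega\,\omega^n = nV$. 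It therefore remains to produce, for each $r$ with $1/4\pi < r < R(X)$, a metric with $\mu(\omega) \geq nVr$; letting $r \uparrow R(X)$ and taking the supremum over $\omega$ then yields $\sup_\omega\mu(\omega) \geq nVR(X)$.

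Fix such an $r$ and, using the definition of $R(X)$ as a supremum, choose $\omega \in c_1(X)$ with $\Ric\omega \geq r\omega$. Taking the trace with respect to $\omega$ gives the pointwise bound $S_\omega \geq nr$. The heart of the matter is to rewrite $W(\omega, f)$ probabilistically: with $\nu := V^{-1}\omega^n$ and $\nu_f := V^{-1}e^{-f}\omega^n$, which are probability measures by the constraint, a direct computation identifies
\begin{equation*}
\frac{1}{V}\int_X |\nabla f|^2 e^{-f}\omega^n = I(\nu_f \vert \nu), \qquad \frac{1}{V}\int_X f\, e^{-f}\omega^n = -H(\nu_f \vert \nu),
\end{equation*}
where $I(\nu_f\vert\nu) := \int_X |\nabla\log(d\nu_f/d\nu)|^2\,d\nu_f$ is the relative Fisher information and $H$ is the relative entropy of section $3$. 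Combining these with $S_\omega \geq nr$ gives, for \emph{every} admissible $f$,
\begin{equation*}
W(\omega, f) \geq nVr + V\big(I(\nu_f\vert\nu) - H(\nu_f\vert\nu)\big).
\end{equation*}

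The remaining step is to show the parenthesized quantity is nonnegative, and this is exactly where $1/4\pi$ enters. The normalization $[\omega] = c_1(X)$ means that the \emph{geometric} Ricci form equals $2\pi$ times the normalized form $\Ric\omega$, so $\Ric\omega \geq r\omega$ is precisely the statement that the Riemannian Ricci curvature of $\omega$ is bounded below by $2\pi r$. The Bakry-\'Emery criterion (the $CD(2\pi r, \infty)$ condition for the volume measure on a compact manifold) then furnishes the logarithmic Sobolev inequality $H(\nu_f\vert\nu) \leq (4\pi r)^{-1}\, I(\nu_f\vert\nu)$, equivalently $I(\nu_f\vert\nu) \geq 4\pi r\,H(\nu_f\vert\nu)$. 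For $r > 1/4\pi$ this forces $I(\nu_f\vert\nu) - H(\nu_f\vert\nu) \geq (4\pi r - 1)H(\nu_f\vert\nu) \geq 0$, whence $W(\omega, f) \geq nVr$ for all $f$ and therefore $\mu(\omega) = \inf_f W(\omega, f) \geq nVr$, as required.

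I expect the main obstacle to be the bookkeeping of normalization constants rather than any analytic difficulty: one must track the factor $2\pi$ arising from $[\omega] = c_1(X)$ so as to obtain exactly the Riemannian Ricci lower bound $2\pi r$, and then invoke the sharp Bakry-\'Emery logarithmic Sobolev inequality with constant $(4\pi r)^{-1}$. It is precisely the balance $4\pi r \geq 1$, i.e.\ $r \geq 1/4\pi$, that makes the Fisher information dominate the relative entropy and closes the estimate; below this threshold the sign of $I(\nu_f\vert\nu) - H(\nu_f\vert\nu)$ is no longer controlled, which explains the hypothesis $R(X) > 1/4\pi$.
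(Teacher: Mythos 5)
Your proof is correct and follows essentially the same route as the paper: after reducing everything to $\sup_\omega \mu(\omega) \geq nVR(X)$ via Theorem \ref{mu-entropy}, you choose $r<R(X)$ and $\omega$ with $\Ric\omega \geq r\omega$, and your rewriting $W(\omega,f)=\int_X S_\omega u^2\,\omega^n + V\bigl(I(\nu_f\vert\nu)-H(\nu_f\vert\nu)\bigr)$ with $u^2=e^{-f}$ is precisely the paper's change of variables, with the Bakry--\'Emery log-Sobolev inequality giving $I \geq 4\pi r\,H$ and nonnegativity of the relative entropy closing the estimate exactly as in the paper's line $W(\omega,f) \geq \int_X S_\omega u^2\,\omega^n + (4\pi r-1)\int_X u^2\log u^2\,\omega^n$. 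Your explicit bookkeeping of the $2\pi$ normalization and the $f=0$ test for $\mu(\omega)\leq nV$ only spell out steps the paper leaves implicit.
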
  
\begin{proof}
Let us take any $r < R(X)$ close to $R(X)$ and $\omega$ such that $\Ric \omega \geq r \omega$. 
Changing variables as $u^2=e^{-f}$ and applying the log-Sobolev inequality, we have 
\begin{align*}
W(\omega, f) 
&=\int_X (S_\omega u^2 +4\abs{\nabla u}^2 -u^2 \log u^2) \omega^n \\
&\geq \int_X S_\omega u^2\omega^n + (4\pi r -1 )\int_X (u^2 \log u^2) \omega ^n.  
\end{align*}
Since $S_\omega \ge n r$ and $r \geq 1/4\pi$ it yields 
$\mu(\omega) \geq nVr$ and hence $\sup_\omega \mu(\omega) \geq nV R(X)$.  
The last claim follows from Theorem \ref{mu-entropy}. 
\end{proof}  

As a consequence of \cite{KMM92} (see also \cite{Bir16}), for $n$-dimensional Fano manifolds $R(X)$ are uniformly bounded from below by a positive constant. 
The author does not know an example of Fano manifolds with $R(X) \leq 1/4\pi$. 

\subsection{Weak geodesic ray asymptotic to the flow}
Let us explain how ideas in the previous sections reprove Theorem \ref{lower bound of the H-functional}.  
It is natural to take the normalized K\"ahler-Ricci flow
\begin{equation*}
\frac{\partial}{\partial t} \omega = -\Ric \omega +\omega
\end{equation*}
in place of the inverse Monge-Amp\`ere flow. 
We again distinguish the flow $\phi_t$ from the geodesic $\phi^t$ by using the subscript. 
In terms of the normalized Ricci potential this can be described as 
\begin{equation}\label{KRF}
\frac{\partial}{\partial t} \phi = -\rho. 
\end{equation}
In fact the equation (\ref{KRF}) incorporates the slope into the H-functional in the form 
\begin{equation*}
 H(\omega_{\phi_t}) =  -\frac{d}{dt}L(\phi_t) +F(\dot{\phi}_t). 
\end{equation*}
As it was shown in \cite{P08}, \cite{PSSW09}, $H(\omega_{\phi_t})$ is non-increasing. 
Notice that in \cite{DH17} another normalization of the K\"ahler potential 
\begin{equation*}
r_t:= \phi_t -E(\phi_t)
\end{equation*}
is adopted. Our choice of $\phi_t$ is precisely equal to $\tilde{r}_t$ in their notation. 
By Perelman's uniform estimate for the Ricci potential we have $\sup_X \phi_t \leq ct +A$. 
See \cite{Pere02}, \cite{ST08} for the expoundation.   
For the Monge-Amp\`ere energy, $E(\phi_t)$ is non-decreasing from Jensen's inequality. 
In particular, the finite slope $\lim_{t \to \infty} t^{-1}E(\phi_t)$ exists. 
For the D-energy we obtain 
\begin{align*}
\frac{d}{dt}D(\phi_t)
=\frac{-1}{V}\int_X \rho(e^\rho-1)\omega_{\phi_t}^n 
=-H(\omega_{\phi_t}) +\frac{1}{V}\int_X \rho \omega_{\phi_t}^n. 
\end{align*}
Jensen's inequality shows 
\begin{equation*}
\frac{1}{V}\int_X \rho \omega_{\phi_t}^n
\leq \log \frac{1}{V}\int_X e^\rho \omega_{\phi_t}^n =0 
\end{equation*} 
so that $D(\phi_t)$ is non-increasing. 
Consequently, we may repeat the argument in subsection 3.3 to deduce the following. 

\begin{thm}[Renormalization of \cite{DH17}, Theorem 2]
Let $\phi_t$ be the normalized K\"ahler-Ricci flow and $\phi_j^t$ $(t \in [0, t_j])$ be the weak geodesic ray of the envelope form (\ref{PB envelope}) so as to connect $\phi_0$ to $\phi_{t_j}$. 
Then there exists a ray $\phi^t$ such that $\lim_{j \to \infty} d_p(\phi_j^t, \phi^t) = 0$ for each $t$. 
As a result $\phi^t$ is a constant-speed geodesic for all $(\cE^p(X, \omega_0), d_p)$. 
It satisfies $\sup_X \phi^t \leq ct+A$ and $E(\phi^t)$ constant. 
\end{thm}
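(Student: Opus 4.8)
The plan is to run the construction of subsection 3.2 essentially verbatim, viewing it as a general machine that turns a flow $\phi_t$ into an asymptotic constant-speed geodesic ray $\phi^t$, and to replace only the three flow-specific estimates by their counterparts for the normalized K\"ahler-Ricci flow (\ref{KRF}). Since the present $\phi_t$ is precisely the renormalized trajectory $\tilde r_t$ of \cite{DH17}, the statement is in substance \cite{DH17}, Theorem 2, so it suffices to verify that this renormalization enjoys the same a priori bounds. Concretely I would fix $t_j\to\infty$, take the envelope-form segments $\phi_j^t$ $(t\in[0,t_j])$ of (\ref{PB envelope}) joining $\phi_0$ to $\phi_{t_j}$, and extract a limit ray by the same compactness-plus-Ascoli scheme, the equicontinuity in $t$ being supplied as before by the constant-speed identity (\ref{constant speed}).

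The three inputs to verify for (\ref{KRF}) are: (i) a linear upper bound $\sup_X\phi_t\leq ct+A$, which here is exactly Perelman's uniform estimate for the Ricci potential recorded above and which, through the formula of \cite{Dar17b}, keeps $t_j^{-1}d_1(\phi_0,\phi_{t_j})$ bounded; (ii) a uniform bound on the entropy $H(V^{-1}\omega_{\phi_j^t}^n\vert V^{-1}\omega_0^n)$ along the segments, needed to invoke the compactness theorem of \cite{BBEGZ11}; and (iii) the behavior of the Monge-Amp\`ere energy, which along (\ref{KRF}) is non-decreasing with finite slope by the Jensen computation performed above. Input (ii) is where the flow enters substantively: I would combine the non-increase of the K-energy $M$ along (\ref{KRF}) with its convexity along the weak geodesic segments to bound $M(\phi_j^t)\leq M(\phi_0)$ uniformly in $j$, and then, after the sup-normalization $\sup_X\phi_j^t=0$, control the lower-order terms of $M$ by the energy $E$ to extract the entropy bound.

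With the inputs in place, \cite{BBEGZ11} together with Ascoli gives a subsequential $d_1$-limit $\phi^t$, and \cite{BBJ15}, Theorem 1.7 identifies its restriction to any interval as again of envelope form, hence a constant-speed geodesic. Here, in contrast with the inverse Monge-Amp\`ere case, the two-sided (Harnack) control of the trajectory furnished by Perelman's estimate keeps the segments uniformly bounded in every $\cE^p$; I therefore expect the convergence and the constant-speed property directly in $d_p$ for all $p$, bypassing the $\CAT(0)$ detour of Proposition \ref{Xia} that was forced on us in the inverse Monge-Amp\`ere setting. The bound $\sup_X\phi^t\leq ct+A$ passes to the limit, and since $E$ is affine along a geodesic I would finally subtract its linear-in-$t$ part -- a harmless twist, corresponding to $\cL\mapsto\cL+c\cX_0$ on the test-configuration side -- to normalize $E(\phi^t)$ to be constant.

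I expect the genuine obstacle to be input (ii): the compactness of \cite{BBEGZ11} is phrased for the $\sup_X=0$ gauge, whereas the monotonicity of $M$ lives naturally on the ungauged flow, so one must check that sup-normalizing the segments preserves the entropy control. This works because the entropy of the Monge-Amp\`ere measure is insensitive to additive constants and $J(\phi_t)$ is bounded on compact time intervals by the linear upper bound (i); together these transfer the convexity-plus-monotonicity bound on $M$ into the uniform entropy estimate that the compactness argument consumes.
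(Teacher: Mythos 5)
Your proposal is correct in substance, but it proves considerably more than the paper does: the paper's own proof is a two-line reduction, whereas you re-derive the underlying theorem. The paper simply observes that the statement was already proved in \cite{DH17}, Theorem 2 for the normalization $r_t=\phi_t-E(\phi_t)$; letting $r_j^t$, $r^t$ be the corresponding segments and limit ray, it checks that $\phi_j^t=r_j^t+\e_j t$ with $\e_j:=t_j^{-1}(E(\phi_{t_j})-E(\phi_0))\to\e:=\lim_{t\to\infty}t^{-1}E(\phi_t)$ (the affine-in-$t$ twist is pluriharmonic in $\tau$, so it preserves the envelope form), and concludes $\phi^t=r^t+\e t$. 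What you do instead is rerun the machinery of subsection 3.2 with the KRF-specific inputs, which amounts to reproving \cite{DH17}, Theorem 2 itself. Your three inputs are the right ones, and your side remarks match what the paper says elsewhere: the entropy bound via monotonicity of $M$ along the flow combined with convexity of $M$ along the weak segments is precisely the Darvas--He mechanism, and the paper notes immediately after the theorem that the $d_p$-convergence for all $p$ follows from the Harnack estimate for the K\"ahler--Ricci flow --- that is, Perelman's two-sided bound on $\rho=-\dot{\phi}_t$ makes \cite{DH17}, Theorem 3.2 applicable and lets one bypass the $\CAT(0)$ argument of Proposition \ref{Xia}, exactly as you predict (though note that mere $\cE^p$-boundedness of the segments would not suffice; the two-sided speed control is what Theorem 3.2 of \cite{DH17} actually consumes). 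Your route buys self-containedness; the paper's buys brevity and makes the relation $\phi^t=r^t+\e t$ to the Darvas--He ray explicit, which is what ``renormalization'' in the theorem's title refers to.

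One caveat on your final step. Subtracting the linear part $\e t$ produces a ray with $E$ constant, but that modified ray sits at $d_p$-distance $\e t$ from the original limit, so it cannot simultaneously satisfy $\lim_{j\to\infty} d_p(\phi_j^t,\phi^t)=0$ unless $\e=0$ --- and $\e>0$ is expected when no K\"ahler--Einstein metric exists, since $\frac{d}{dt}E(\phi_t)=-\frac{1}{V}\int_X \rho\,\omega_{\phi_t}^n\geq 0$ with equality only for $\rho$ constant. The paper has the same tension: its conclusion $\phi^t=r^t+\e t$ gives $E(\phi^t)=E(\phi_0)+\e t$, affine rather than constant, so the last clause of the statement is accurate only for the renormalized ray $r^t$. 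Your proposal therefore inherits exactly the ambiguity present in the paper, and your instinct to treat the subtraction as a harmless twist (the analogue of $\cL\mapsto\cL+c\cX_0$) is the same implicit convention the paper uses; just be aware that the two conclusions of the theorem refer, strictly speaking, to two rays differing by $\e t$.
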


\begin{proof}
The statement was originally proved for $r_t = \phi_t -E(\phi_t)$. 
Let $r_j^t$, $r^t$ be the corresponding weak geodesic. 
In the same way as subsection 3.3 we obtain the limit $\phi^t$ of $\phi_j^t$. 
It then easy to check that $\phi_j^t= r_j^t + \e_j t$ holds for 
$\e_j := t_j^{-1}(E(\phi_{t_j})-E(\phi_0))$ which converges to $\e:= \lim_{t \to \infty} t^{-1}E(\phi_t)$. We conclude $\phi^t = r^t +\e t$. 
\end{proof} 

We notice that the property $\lim_{j \to \infty} d_p(\phi_j^t, \phi^t) = 0$ follows from the Harnack estimate for the K\"ahler-Ricci flow. 
Let us extend the definition of $F(\dot{\phi}^t)$ to the above (possibly not differential) weak geodesic ray. 
First we recall: 
\begin{thm}[\cite{Dar17a}, Theorem 1] 
For the $\phi^t$ constructed from the envelope form (\ref{PB envelope}) 
we have constants $m, M$ such that   
for any $a, b \in [0, \infty)$  
\begin{itemize} 
\item[$(1)$] 
$\inf_X \frac{\phi_a-\phi_b}{a-b} = m$, 
\item[$(2)$]
$\sup_X \frac{\phi^a-\phi^b}{a-b} =M$. 
\end{itemize}
\end{thm}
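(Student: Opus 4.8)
\emph{Plan.} The assertion concerns only the envelope ray $\phi^t$, and parts $(1)$ and $(2)$ are the $\inf$- and $\sup$-versions of a single statement; I would prove $(2)$ and obtain $(1)$ by the symmetric (lower-barrier) argument. The starting observation is that $\Phi(x,\tau)=\phi^{-\log\abs{\tau}}(x)$ is $p_1^*\omega_0$-psh and $\S^1$-invariant on $X\times(\B\setminus\{0\})$, so that for each fixed $x$ the function $t\mapsto\phi^t(x)$ is convex. Hence the one-sided derivatives $\dot\phi^{t+}(x),\dot\phi^{t-}(x)$ exist, the difference quotients are monotone, and for $b<a$ one has the pointwise pinching $\dot\phi^{b+}(x)\leq \frac{\phi^a(x)-\phi^b(x)}{a-b}\leq\dot\phi^{a-}(x)$. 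Taking $\sup_X$ gives $\sup_X\dot\phi^{b+}\leq \sup_X\frac{\phi^a-\phi^b}{a-b}\leq\sup_X\dot\phi^{a-}$. So it suffices to show that $t\mapsto\sup_X\dot\phi^{t+}$ (and its left-hand analogue) is a constant $M$; then $(2)$ follows with this value $M$, which is finite by the a priori bound \eqref{sup=0}.

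The nontrivial content is an \emph{upper} bound, since convexity already makes $\sup_X\dot\phi^{t+}$ nondecreasing in $t$; I must rule out a strict increase and show $\dot\phi^{t+}(x)\leq M:=\sup_X\dot\phi^{0+}$ for all $t,x$. The mechanism I would exploit is the maximality of the envelope \eqref{PB envelope} together with a family of explicit comparison paths: for any $\omega_0$-psh $u$ and any $c\in\R$ the affine path $t\mapsto u+ct$ is itself a subgeodesic, because with $t=-\log\abs{\tau}$ the term $-c\log\abs{\tau}$ is harmonic in $\tau$ and contributes nothing, so $p_1^*\omega_0+dd^c_{x,\tau}(u+ct)=\omega_u\geq 0$. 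These affine subgeodesics are the natural barriers for the degenerate homogeneous operator that $\Phi$ annihilates, $(p_1^*\omega_0+dd^c_{x,\tau}\Phi)^{n+1}=0$.

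The hard part is a maximum-principle / slope-transport step. The guiding picture is that along a point (or a maximizing sequence) realizing the extremal slope, the leaf of the Monge--Amp\`ere foliation is flat, forcing $\phi^t$ to be \emph{affine} in $t$ there, so that the extremal slope on $[b,a]$ agrees with the one on every other interval. Concretely I would fix $[0,T]$, set $M=\sup_X\frac{\phi^T-\phi^0}{T}$, compare $\phi^t$ against the affine subgeodesic of slope $M$ pinned at the appropriate endpoint, and invoke the comparison principle for the homogeneous complex Monge--Amp\`ere equation---the same pluripotential tool behind the envelope construction and Berndtsson's convexity theorem \cite{Bern11}---to conclude $\dot\phi^{t+}\leq M$ globally. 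The bound \eqref{sup=0} ensures the slopes are finite and the barriers admissible. Part $(1)$ is identical with all inequalities reversed, using the $-\log\abs{\tau}$ barriers to bound the slope from below; the constant-speed property \eqref{constant speed d1} is consistent with, and can be recovered from, the resulting affine structure in the extremal directions.

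I expect the main obstacle to be precisely this reverse bound. Two technical points need care. First, the extremal slope need not be attained at a point but only in a limit, so one cannot argue at a maximizer; instead one must work with the upper-semicontinuous regularizations intrinsic to the envelope \eqref{PB envelope} and with $\sup_X$ of the (a priori only nondecreasing) derivative. Second, the Monge--Amp\`ere foliation is not smooth in general, so the ``flatness at the maximum'' cannot be read off an explicit leaf and must be extracted from the comparison principle for the degenerate operator. Once the upper bound on $\sup_X\dot\phi^{t+}$ is secured, the reduction of the first paragraph closes the argument.
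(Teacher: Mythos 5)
First, note that the paper does not prove this statement at all: it is imported verbatim from \cite{Dar17a}, Theorem 1, so your attempt has to be measured against Darvas's argument. In outline you do shadow it --- pointwise convexity of $t\mapsto\phi^t(x)$ (from $\S^1$-invariance and plurisubharmonicity of $\Phi$), affine subgeodesics as barriers, and maximality of the envelope (\ref{PB envelope}) are exactly the right ingredients. But there is a genuine gap at precisely the step you flag as the obstacle, and the mechanism you propose for it is misdirected. Comparison of the envelope against a subgeodesic candidate can only bound the envelope from \emph{below}; it cannot ``conclude $\dot{\phi}^{t+}\leq M$ globally'', which is an upper bound on a derivative. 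Neither a maximum principle at a (possibly non-attained) extremal point nor flatness of a Monge--Amp\`ere foliation leaf is available --- the foliation has no regularity here --- and in fact neither is needed. The missing idea is simpler: the definition of the extremal slope itself makes the correct barrier \emph{admissible}, after which everything is bookkeeping.

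Concretely: fix $T>0$; the restriction of the ray to $[0,T]$ is the maximal envelope segment joining $\phi^0$ to $\phi^T$ (this is what the construction (\ref{PB envelope}), via \cite{BBJ15}, guarantees). Put $M_T:=\sup_X(\phi^T-\phi^0)/T$ and $m_T:=\inf_X(\phi^T-\phi^0)/T$. The affine path $v^t:=\phi^T-(T-t)M_T$ is a subgeodesic with $v^T=\phi^T$ and --- this is the point --- $v^0=\phi^T-TM_T\leq\phi^0$ \emph{precisely because} $M_T$ is the supremum of the slope; so maximality gives $\phi^t\geq v^t$, hence $\sup_X(\phi^t-\phi^0)\geq tM_T$. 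Pointwise convexity gives the reverse, $\phi^t-\phi^0\leq\frac{t}{T}(\phi^T-\phi^0)\leq tM_T$, so $\sup_X(\phi^t-\phi^0)=tM_T$ is linear in $t$; in particular $M_t=M_T$ for $t\leq T$, so $M:=M_T$ is independent of $T$. Dually, the admissible subgeodesic $\phi^0+tm_T$ yields $\inf_X(\phi^t-\phi^0)=tm_T$ and the constant $m$. For an arbitrary interval $[b,a]$, run the same two identities on the segment $[b,T]$ (slopes based at the shared left endpoint $b$ are constant in $a$) and on the time reversal $s\mapsto\phi^{T-s}$ of $[0,T]$ (which exchanges sup- and inf-slopes up to sign, so slopes ending at the shared right endpoint are constant in $b$); chaining these transports $m$ and $M$ to every $[b,a]$, which is the theorem. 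Your reduction to constancy of $\sup_X\dot{\phi}^{t+}$ then comes out as a corollary of this computation rather than a route to it. Two small points you and the quoted statement both elide: in part $(1)$ the quotient should read $\phi^a,\phi^b$ (superscripts, the geodesic, not the flow), and finiteness of $m$ needs a lower linear bound on the ray, which in the paper's application comes from two-sided control of the flow endpoints feeding the envelopes.
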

Solution of the Hausdorff moment problem guarantees the following definition. 
\begin{dfn}
Let $\phi^t$ be the above weak geodesic ray constructed from the envelope form (\ref{PB envelope}), which in particular has a constant speed for any $d_p$. 
Define the Duistermatt-Heckman measure $\DH(\phi^t)$ as the unique measure supported on $[m, M]$ such that for any $p\geq 1$ 
\begin{equation*}
\bigg[ \int_\R \abs{\lambda}^p \DH(\phi^t)\bigg]^{\frac{1}{p}}= \frac{d_p(\phi^0, \phi^t)}{t} 
\end{equation*}
holds. By definition $\DH(\phi^t)$ does not depend on $t$. 
We set 
\begin{equation*}
F(\dot{\phi}^t):= -\log \int_\R e^{-\lambda} \DH(\phi^t).  
\end{equation*}
\end{dfn}
When $\phi^t$ is the weak geodesic ray associated to a test configuration, we have 
\begin{equation*}
\int_\R \abs{\lambda}^p \DH(\cX, \cL)= 
\frac{1}{V}\int_X \abs{\dot{\phi}^t}^p \omega_{\phi^t}^n 
=\frac{d_p(\phi^0, \phi^t)^p}{t^p}. 
\end{equation*}
It implies $\DH(\phi^t) = \DH(\cX, \cL)$. 
For the flow $\phi_t$ we set $\DH(\phi_t):= (\dot{\phi}_t)_*(V^{-1}\omega^n)$.  

\begin{lem}
For the above weak geodesic ray asymptotic to the normalized K\"ahler Ricci flow we have 
\begin{equation*}
F(\dot{\phi}^t) =F(\dot{\phi}_t)=0. 
\end{equation*}
\end{lem}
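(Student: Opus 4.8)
The plan is to treat the two equalities separately: the first is essentially the normalization identity for the Ricci potential, and the second transfers it from the flow to the limiting ray by a weak-convergence argument.

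For $F(\dot\phi_t)$ I would argue directly. Along the normalized K\"ahler--Ricci flow (\ref{KRF}) one has $\dot\phi_t=-\rho$, so by the very definition of the pushforward measure $\DH(\phi_t)=(\dot\phi_t)_*(V^{-1}\omega_{\phi_t}^n)$,
\begin{equation*}
\int_\R e^{-\lambda}\,\DH(\phi_t)=\frac{1}{V}\int_X e^{-\dot\phi_t}\omega_{\phi_t}^n=\frac{1}{V}\int_X e^{\rho}\omega_{\phi_t}^n=1,
\end{equation*}
where the last equality is exactly the normalization $\int_X(e^\rho-1)\omega^n=0$ in (\ref{Ricci potential}). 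Hence $F(\dot\phi_t)=-\log 1=0$ for every $t$.

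For $F(\dot\phi^t)$ I would reduce the claim to the single identity $\int_\R e^{-\lambda}\,\DH(\phi^t)=1$ and deduce it by passing to the limit in the computation above. The plan is to show that the pushforward measures $\DH(\phi_t)$ converge weakly, as $t\to\infty$, to the Duistermaat--Heckman measure $\DH(\phi^t)$ of the limiting geodesic ray. Granting this, I observe that all the measures in play are carried by one fixed compact interval: by Perelman's estimate (\cite{Pere02}, \cite{ST08}) the speeds $\dot\phi_t=-\rho$ are uniformly bounded in $C^0$, so the $\DH(\phi_t)$ sit inside some $[-C,C]$, and after enlarging $C$ the support $[m,M]$ of $\DH(\phi^t)$ (from \cite{Dar17a}, Theorem 1) is contained in $[-C,C]$ as well. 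Since $e^{-\lambda}$ is continuous and bounded there, weak convergence gives
\begin{equation*}
\int_\R e^{-\lambda}\,\DH(\phi^t)=\lim_{t\to\infty}\int_\R e^{-\lambda}\,\DH(\phi_t)=1,
\end{equation*}
whence $F(\dot\phi^t)=0$, as desired.

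The heart of the matter, and the step I expect to be the main obstacle, is the weak convergence $\DH(\phi_t)\to\DH(\phi^t)$. Because both families live on a fixed compact interval, this is equivalent to convergence of all moments,
\begin{equation*}
\Big[\tfrac{1}{V}\int_X|\dot\phi_t|^p\omega_{\phi_t}^n\Big]^{1/p}\longrightarrow \frac{d_p(\phi^0,\phi^t)}{t}\qquad (p\geq 1),
\end{equation*}
the right-hand side being the $p$-th moment of $\DH(\phi^t)$ by its defining property. Comparing the $d_p$-length of the flow trajectory on $[0,t]$ with the geodesic distance $d_p(\phi_0,\phi_t)$, together with the constant speed of $\phi^t$, yields one of the two inequalities; the reverse one amounts to the statement that the flow is asymptotically geodesic, i.e.\ that its instantaneous speed agrees in the limit with the geodesic speed. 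In the present K\"ahler--Ricci setting this is precisely the content of the Harnack estimate, which underlies the relation $\lim_{j\to\infty}d_p(\phi_j^t,\phi^t)=0$ recorded after the renormalized \cite{DH17}, Theorem 2, and which has no counterpart for the inverse Monge--Amp\`ere flow, where only the weaker $d_2$-statement of Proposition \ref{Xia} is available.
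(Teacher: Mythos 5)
Your first half is fine and coincides exactly with the paper's proof: along the flow $\dot\phi_t=-\rho_t$, so $\int_\R e^{-\lambda}\,\DH(\phi_t)=\frac1V\int_X e^{\rho_t}\omega_{\phi_t}^n=1$ by the normalization (\ref{Ricci potential}), whence $F(\dot\phi_t)=0$ for every $t$. The gap is in the second half. You reduce $F(\dot\phi^t)=0$ to the pointwise weak convergence $\DH(\phi_t)\to\DH(\phi^t)$ as $t\to\infty$, i.e.\ to the convergence of the instantaneous speeds $\norm{\dot\phi_t}_p=\bigl[\frac1V\int_X\abs{\rho_t}^p\omega_{\phi_t}^n\bigr]^{1/p}$ to the geodesic speed $d_p(\phi^0,\phi^t)/t$, and you attribute this "asymptotically geodesic" statement to the Harnack estimate. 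That attribution does not hold up: Perelman's estimate gives the uniform $C^0$ bound on $\rho_t$ (hence the common compact support, which you use correctly) and, through \cite{DH17}, the convergence $d_p(\phi_j^t,\phi^t)\to 0$ of the connecting geodesic \emph{segments}; but segment convergence only controls the \emph{averaged} speed $d_p(\phi_0,\phi_{t_j})/t_j$ over the whole interval $[0,t_j]$, not the speed of the flow at a given time. Unlike the inverse Monge--Amp\`ere flow, where $\norm{\dot\phi_t}_2=R(\phi_t)^{1/2}$ is non-increasing by the gradient-flow structure, no monotonicity of $t\mapsto\norm{\dot\phi_t}_p$ is known along the K\"ahler--Ricci flow, so $\lim_{t\to\infty}\norm{\dot\phi_t}_p$ need not even exist a priori; your "reverse inequality" is precisely the unproven point, and nothing in the cited estimates supplies it.

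The paper's proof avoids this issue by never taking a pointwise limit of $\DH(\phi_t)$: it works with time averages. Since each segment $\phi_j^t$ has constant speed and shares its endpoints with the flow, matching the $p$-th moments for all $p\geq 1$ gives the exact identity $\DH(\phi_j^t)=\frac{1}{t_j}\int_0^{t_j}\DH(\phi_t)\,dt$, and then $d_p(\phi_j^t,\phi^t)\to 0$ yields $\DH(\phi^t)=\lim_{j\to\infty}\frac{1}{t_j}\int_0^{t_j}\DH(\phi_t)\,dt$. The crucial observation is that your first computation gives $\int_\R e^{-\lambda}\,\DH(\phi_t)=1$ \emph{exactly for every} $t$, not merely in the limit, so the identity survives Ces\`aro averaging and passes to the limit measure, giving $F(\dot\phi^t)=0$ with no statement about instantaneous speeds whatsoever. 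Your argument is therefore repaired by replacing "weak convergence of $\DH(\phi_t)$" with "weak convergence of the time averages of $\DH(\phi_t)$", which is what the segments actually deliver; as written, the proposal rests on an asymptotic-speed claim that is open for the K\"ahler--Ricci flow.
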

\begin{proof}
First we observe 
\begin{equation*}
\int_\R e^{-\lambda} \DH(\phi_t)=\frac{1}{V}\int_X e^{-\dot{\phi}_t} \omega^n  =\frac{1}{V}\int_X e^{\rho} \omega^n =1. 
\end{equation*}
Since $\DH(\phi_j^t)$ is constant in $t$, we have 
\begin{align*}
\int_\R \abs{\lambda}^p \DH(\phi_j^{t})
&=\frac{1}{t_j}\int_0^{t_j}\int_\R \abs{\lambda}^p \DH(\phi_j^{t})\\
&=\frac{d_p(\phi_0, \phi_j^{t_j})^p}{t_j}
=\frac{d_p(\phi_0, \phi_{t_j})^p}{t_j}\\
&=  \frac{1}{t_j}\int_0^{t_j} \int_\R \abs{\lambda}^p \DH(\phi_t) 
\end{align*}
for any $p \geq 1$. 
It means the identity of the probability measures:  
\begin{equation*}
\DH(\phi_j^{t})
=\frac{1}{t_j}\int_0^{t_j} \DH(\phi_t). 
\end{equation*}
In the same way we obtain $\DH(\phi^{t})= \lim_{t \to \infty}  \DH(\phi_j^t)$ from $\lim_{j \to \infty} d_p(\phi_j^t, \phi^t) = 0$. 
In particular 
\begin{equation*}
 \int_\R e^{-\lambda} \DH(\phi^t) 
= \lim_{j \to \infty}  \int_\R e^{-\lambda} \bigg[ \frac{1}{t_j} \int_0^{t_j} \DH(\phi_t) \bigg] 
= 1. 
\end{equation*}
\end{proof} 

\subsection{Multiplier ideal sheaves for the asymptotic weak geodesic ray}
Totally in parallel with section 3.3 we may further construct approximative test configurations. 
Set the $\S^1$-invariant $p_2^*\omega_0$-psh function $\Phi(x, e^{-t}) := \phi^t(x)$.  
The linear bound $\sup_X \phi \leq ct +A$ implies that $\hat{\Phi}:=\Phi+c\log\abs{\tau}$ is uniquely extended to a $p_1^*\omega_0$-psh function on $X\times \B$. 
We obtain the $\S^1$-invariant multiplier ideal sheaf $\cJ(m\hat{\Phi})$ and the normalized blow-up $\rho_m\colon \cX_m \to X\times \A^1$ with exceptional divisor $E_m$. 
The line bundle is given by 
\begin{equation}\label{line bundle of the test configuration}
\cL_m:=\rho_m^*p_1^*(-K_X) -\frac{1}{m+m_0}E_m+\frac{cm}{m+m_0}\rho_m^*\cX_{m, 0}.  
\end{equation}

\begin{thm}
Let $\phi^t$ be the above weak geodesic ray for the normalized K\"ahler-Ricci flow and $(\cX_m, \cL_m)$ be the canonical sequence of test configurations approximates $\phi^t$. 
Then we have 
\begin{equation*}
 \liminf_{m \to \infty} H(\cX_m, \cL_m)
 \geq 
 \lim_{t \to \infty} \bigg[ \frac{-L(\phi^t)}{t}+ F(\dot{\phi}^t) \bigg]. 
\end{equation*}
\end{thm}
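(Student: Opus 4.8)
The proof will run parallel to that of Theorem A, splitting
$H(\cX_m,\cL_m)=-L^\NA(\cX_m,\cL_m)+F(\cX_m,\cL_m)$
and estimating the two terms separately, with the relative-entropy term $F$ now taking over the role that $E^\NA$ played in $D^\NA=L^\NA-E^\NA$. Since the test configurations of Section 4.3 are built from $\phi^t$ by the verbatim recipe of Section 3.3, the $L$-part is essentially free: the continuity of the non-Archimedean $L$-functional under the multiplier-ideal approximation established in \cite{BBJ15} (the ingredient behind Theorem \ref{continuity of D^NA}), resting on the estimate $\phi_m^t\ge\phi^t-C$ from Demailly's local approximation and the Ohsawa--Takegoshi extension, gives $\limsup_{m}L^\NA(\cX_m,\cL_m)\le\lim_{t\to\infty}L(\phi^t)/t$, hence $\liminf_m\bigl(-L^\NA(\cX_m,\cL_m)\bigr)\ge-\lim_{t\to\infty}L(\phi^t)/t$.

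The substance of the statement is therefore the lower semicontinuity of the entropy term,
\begin{equation*}
\liminf_{m\to\infty}F(\cX_m,\cL_m)\ \ge\ F(\dot\phi^t)=0,
\end{equation*}
where $F(\dot\phi^t)=0$ is the value already computed for the ray asymptotic to the K\"ahler--Ricci flow. Writing both sides through their Duistermaat--Heckman measures, $F(\cX_m,\cL_m)=-\log\int_\R e^{-\lambda}\,\DH(\cX_m,\cL_m)$ and $F(\dot\phi^t)=-\log\int_\R e^{-\lambda}\,\DH(\phi^t)$, I must show $\limsup_m\int_\R e^{-\lambda}\,\DH(\cX_m,\cL_m)\le\int_\R e^{-\lambda}\,\DH(\phi^t)=1$. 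Two comparisons of these measures are at hand. First, the Lidskii-type inequality (Theorem \ref{Lidskii type inequality}), applied exactly as in the $L^2$-norm comparison of Section 3.2 but now for every exponent, gives $d_p(\phi^0,\phi_m^t)\le d_p(\phi^0,\phi^t)$ and hence the moment domination $\int_\R|\lambda|^p\,\DH(\cX_m,\cL_m)\le\int_\R|\lambda|^p\,\DH(\phi^t)$ for all $p\ge1$, the two integrals being $(d_p(\phi^0,\phi_m^t)/t)^p$ and $(d_p(\phi^0,\phi^t)/t)^p$. Second, the same Demailly bound together with monotonicity of the Monge--Amp\`ere energy gives the barycenter domination $\int_\R\lambda\,\DH(\cX_m,\cL_m)=E^\NA(\cX_m,\cL_m)\ge\lim_{t}E(\phi^t)/t=\int_\R\lambda\,\DH(\phi^t)$.

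The hard point is to convert these into the exponential inequality, and this is the main obstacle: unlike $E^\NA$, the term $F$ is a log-Laplace transform, so it is not a linear functional of the spectral measure and the signed odd moments are invisible to the bounds $\int|\lambda|^p$. I would close the gap by upgrading the symmetric-moment domination to a genuine convex ordering (majorization) of the centered measures: the moment bounds already confine $\DH(\cX_m,\cL_m)$ to the interval on which $\DH(\phi^t)$ lives and keep it no more dispersed, while the barycenter bound shifts its mean to the right, and both effects decrease $\int e^{-\lambda}$ because $\lambda\mapsto e^{-\lambda}$ is convex and decreasing. Note that the naive Jensen inequality $F\le E^\NA$ points the wrong way and is useless here; what is required is precisely the opposite, sharp lower bound $F\ge0$, and extracting it from the geodesic domination is the crux, this majorization being the nontrivial analogue of the bare identity $E(\phi^t-C)=E(\phi^t)-C$ that sufficed for the $E^\NA$-estimate.

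Granting the entropy estimate, the two bounds combine to
\begin{equation*}
\liminf_{m\to\infty}H(\cX_m,\cL_m)\ \ge\ -\lim_{t\to\infty}\frac{L(\phi^t)}{t}+F(\dot\phi^t)=\lim_{t\to\infty}\Bigl[\frac{-L(\phi^t)}{t}+F(\dot\phi^t)\Bigr],
\end{equation*}
which is the assertion. Paired with the easy direction obtained from Proposition \ref{Legendre transformation} and the slope formula (\ref{slope of H}), it forces the two sides of Theorem \ref{lower bound of the H-functional} to coincide, both equal to $\inf_\omega H(\omega)$.
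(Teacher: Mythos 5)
You correctly split $H=-L^\NA+F$, handle the $L^\NA$-part exactly as the paper does (it is quoted from \cite{BBJ15}), and reduce to $\liminf_m F(\cX_m,\cL_m)\ge F(\dot\phi^t)=0$. But there is a genuine gap at precisely the step you flag as ``the main obstacle,'' and the inequality you prepare points the wrong way to ever close it. The paper applies the Lidskii inequality (Theorem \ref{Lidskii type inequality}) in the \emph{opposite} orientation to the $L^2$-comparison of Theorem A: from the Demailly bound $\Phi_m\ge\Phi-C_{m,r}$ it deduces $d_p(\phi_0,\phi_m^t)\ge d_p(\phi_0,\phi^t-C_{m,r})$, i.e.\ the $p$-moments of $\DH(\phi_m^t)$ \emph{dominate} those of $\DH(\phi^t)$, and reads this as domination $\DH(\phi_m^t)\ge\DH(\phi^t)$ of the raw, uncentered measures; since $\lambda\mapsto e^{-\lambda}$ is decreasing, mass pushed to the right decreases $\int_\R e^{-\lambda}\,\DH$, and $F(\cX_m,\cL_m)\ge F(\dot\phi^t)$ follows at once --- no centering, no majorization. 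The point is that $F$, unlike the centered $L^2$-norm of Theorem A, is monotone under the one-sided ordering of rays that the Demailly estimate provides, so the sign information in $\phi_m^t\ge\phi^t-C_{m,r}$ must be retained. Your application of Lidskii ``exactly as in Section 3.2'' converts it into the symmetric bound $d_p(\phi^0,\phi_m^t)\le d_p(\phi^0,\phi^t)$, which discards the sign and is useless for a log-Laplace transform.

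Moreover, the route you sketch is not merely unproven but unavailable from the data you collect: absolute-moment domination for all $p\ge1$ together with the barycenter inequality does \emph{not} imply the needed inequality $\int_\R e^{-\lambda}d\mu\le\int_\R e^{-\lambda}d\nu$, and your claim that the moment bounds ``confine'' the measure to the support interval of $\DH(\phi^t)$ fails on the left, exactly where $e^{-\lambda}$ is large (moments only control $\sup\abs{\lambda}$). Concretely, take $\nu=\tfrac12\delta_{-1/2}+\tfrac12\delta_{1}$ and $\mu=\tfrac14\delta_{-1}+\tfrac34\delta_{2/3}$: one checks $\int_\R\abs{\lambda}^p d\mu\le\int_\R\abs{\lambda}^p d\nu$ for every $p\ge1$ (with equality at $p=1$), the barycenters are both $\tfrac14$, yet $\int_\R e^{-\lambda}d\mu=\tfrac14 e+\tfrac34 e^{-2/3}\approx 1.065$ exceeds $\int_\R e^{-\lambda}d\nu=\tfrac12\bigl(e^{1/2}+e^{-1}\bigr)\approx 1.008$, because $\mu$ smuggles mass to the left of $\supp\nu$ while compensating on the right --- invisible to $\abs{\lambda}^p$ and to the mean. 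So no ``upgrade to a convex ordering'' can be extracted from your two comparisons; the proof must obtain domination of the Duistermaat--Heckman measures directly from the pointwise ordering of the rays, as the paper does.
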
 
\begin{proof}
For the part concerned with $L^\NA(\cX_m, \cL_m)$ it is due to \cite{BBJ15}. 
The $F(\cX_m, \cL_m)$ part follows from essentially the same argument as that  for $E^\NA$.   
Indeed, for the weak geodesic ray $\Phi_m$ associated with $(\cX_m, \cL_m)$    we obtain $\Phi_m \geq \Phi-C_{m, r}$ by using local Demailly approximation. 
We again use Theorem \ref{Lidskii type inequality} to compare the $p$-moments as 
\begin{align*}
\int_\R \abs{\lambda}^p \DH(\phi_m^{t})
&=\frac{1}{t}\int_0^t\int_\R \abs{\lambda}^p \DH(\phi_m^{s})\\
&=\frac{d_p(\phi_0, \phi_m^{t})^p}{t}
\geq \frac{d_p(\phi_0, \phi^t-C_{m, r})^p}{t}. 
\end{align*}
As $t \to \infty$, just by the definition of $\DH(\phi^{t})$, the last term converges to $\int_\R \abs{\lambda}^p \DH(\phi^{t})$. 
It implies $\DH(\phi_m^t) \geq \DH(\phi^t)$ so that $F(\cX_m, \cL_m) \geq F(\phi^t)$. 
\end{proof} 
Combining all together, we obtain 
\begin{align*}
\liminf_{m \to \infty} H(\cX_m, \cL_m) 
& \geq \lim_{t \to \infty} \bigg[ \frac{-L(\phi^t)}{t}+ F(\dot{\phi}^t) \bigg] \\
& = \lim_{t \to \infty} \bigg[ \frac{-L(\phi_t)}{t}+ F(\dot{\phi}_t) \bigg]. 
\end{align*}
Finally the K\"ahler-Ricci flow equation translates the last term into the limit of 
$H(\omega_{\phi_t})$. 
It completes the proof of Theorem B. 



\end{document}